\theoremstyle{plain}
\newtheorem{Theorem}{Theorem}
\newtheorem{Proposition}{Proposition}
\newtheorem{Lemma}{Lemma}
\newtheorem{Corollary}{Corollary}
\newtheorem{Remark}{Remark}
\newtheorem{Example}{Example}
\title{Equivalence classes of dessins d'enfants\\ with two vertices}
\author{Madoka Horie}
\date{Department of Mathematics, Tohoku University}
\begin{document}
\maketitle

\begin{quote}
{\bf Abstract.} Let $N$ be a positive integer. 
For any positive integer $L\leq N$ and any positive divisor $r$ of $N$, we enumerate the equivalence classes of dessins d'enfants with $N$ edges, $L$ faces and two vertices whose automorphism groups are cyclic of order $r$. 
Further, for any non-negative integer $h$, we enumerate the equivalence classes of dessins with $N$ edges, $h$ faces of degree $2$ with $h\leq N$, and two vertices, whose automorphism groups are cyclic of order $r$. 
Our arguments are essentially based upon a natural one-to-one correspondence of the equivalence classes of all dessins with $N$ edges to the equivalence classes of all pairs of permutations with components generating transitive subgroups of the symmetric group of degree $N$. 
\end{quote}

\section{Introduction}
In \cite{G}, Grothendieck found 
an amazing relationship between finite coverings of the projective line which are unramified outside the points 0,1, and the infinity. From his point of view, such coverings can be understood by the corresponding dessins in terms of combinatorial or topological ways (we refer \cite{Za}).  In this view, 
the enumeration of dessins 
 have been studied in many articles
(cf. \cite{D-M}, \cite{M-N}). 
In this paper, we aim to obtain several elementary formulas for numbers of equivalence classes of dessins d'enfants
 with two vertices.  
 For dessins and its equivalence, we refer Girondo and Gonz\'alez \cite[Definition 4.\,1]
{G-G}.

Throughout this paper, we fix positive integers $N$ and $L$ with $L \leq N$. Let $\mathbb N$ be the set of all positive integers. For each $m\in\mathbb N$, let $D_m$ denote the set of positive divisors of $m$ and $P_m$ the set of prime divisors of $m$. 
For each $n\in D_N$, we define

$$
\Lambda^{(n)}=\Lambda^{(n)} (N,L)=\left\{\lambda=(l_1, \dots, l_\nu) \middle| \
\begin{aligned}
& \nu \in \mathbb N, l_1,\dots, l_\nu \in D_{N/n}, \\
& l_1 \geq \dots \geq l_\nu,  \\
& l_1+\dots +l_\nu=L\
 \end{aligned}
\right\}.
 $$

We take any $\lambda= (l_1,\dots, l_\nu)\in\varLambda^{(n)}$ and any $p\in P_{N/n}$. 
Assume that $\lambda$ is given by $L=l_1+\cdots+l_\nu$ with a positive integer $\nu$ and a decreasing sequence $l_1\geq\cdots\geq l_\nu$ of $\nu$ positive divisors of $N/n$. 
For each $\lambda = (l_1,\dots, l_\nu) \in \Lambda ^{(n)}$ and $p \in P_{N/n}$, let $i_0(n, \lambda,p)$ denote the number of positive integers $j\leq\nu$ with $\gcd(l_j,p)=1$, and $i(n,\lambda,p)$ the number of positive integers $j\leq\nu$ with $pl_j\mid N/n$, i.e., $pl_j\in D_{N/n}$. 
Obviously, $i_0(n,\lambda,p)\leq i(n,\lambda,p)$.
We then define 
$$A_{n,\lambda,p}=\left(1-\frac{1}{p}\right)^{i(n,\lambda,p)-i_0(n,\lambda,p)}\left(\left(1-\frac{1}{p}\right)^{i_0(n,\lambda,p)+1}-\left(-\frac{1}{p}\right)^{i_0(n,\lambda,p)+1}\right).$$
Note that if $i(n,\lambda,p)=0$, $A_{n,\lambda,p}=1$. 
We also define 
$$\varDelta_\lambda=\prod_{j=1}^\nu l_j,\quad B_\lambda=\frac{\nu!}{\nu_1!\nu_2!\dots\nu_L!\,\varDelta_\lambda}.$$
Here, for each positive integer $k\leq L$, $\nu_k$ denotes the number of positive integers $j\leq\nu$ with $l_j=k$. 
For each positive integer $m\leq n$, we denote by $\varLambda_m^{(n)}$ the subset of $\varLambda^{(n)}$ consisting of $\lambda=(l_1,\dots,l_{\nu})$ with $\nu=m.$
Using the elementary symmetric polynomial $\mathfrak s_m(\xi_1,\dots,\xi_n)$ of degree $m$ in $n$ variables $\xi_1$, \dots, $\xi_n$, we let 
$$f^{(n)}_m=(1-(-1)^m)\mathfrak s_m(1,2,\dots,n)=
\begin{cases}
\displaystyle{2\mathfrak s_m(1,2,\dots,n)} & \quad\text{if}\ 2\nmid m,\\
\displaystyle{0} & \quad\text{if}\ 2\mid m.
\end{cases}$$

For each $r \in D_N$ and $n \in D_{N/r},$ we define
$$\varPsi_{N,L,n}=\frac{N^n}{n^{n+1}(n+1)}\sum_{m=1}^{\min(n,L)}f^{(n)}_{n-m+1}\sum_{\lambda\in\varLambda_m^{(n)}}B_\lambda\prod_{p\in P_{N/n}}A_{n,\lambda,p}.$$

Let $\mathfrak D_1=\mathfrak D_1(N,L)$ denote the set of all equivalence classes of dessins with $N$ edges, $L$ faces and two vertices and, for each $r\in D_N$, let $\mathfrak D_{1,r} = \mathfrak D_{1,r}(N,L)$ denote the set of equivalence classes in $\mathfrak D_1$ whose representatives have automorphism group of order $r$. 

\begin{Theorem}
For any $r\in D_N$, 
$$|\mathfrak D_{1,r}|=\frac{r}{N}\sum_{n\in D_{N/r}}\mu(N/(nr))\varPsi_{N,L,n},$$
where $\mu$ denotes the M\"obius function. In particular,  
$$|\mathfrak D_1|=\sum_{r\in D_N}|\mathfrak D_{1,r}|=\sum_{u\in D_N}\frac{1}{u}\sum_{n\in D_u}\mu(u/n)\varPsi_{N,L,n}.$$
\end{Theorem}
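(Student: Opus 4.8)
The plan is to pass to the standard permutation model of dessins, carry out a Möbius inversion over the divisor lattice of $N$, and then perform the one genuinely involved computation: an explicit count of pairs of permutations inside a wreath product. Under the one-to-one correspondence recalled in the abstract, a class in $\mathfrak D_1(N,L)$ is the same as an $S_N$-conjugacy class of pairs $(\sigma,\tau)$ in which both $\sigma$ and $\tau$ are $N$-cycles (the unique black and the unique white vertex) and $(\sigma\tau)^{-1}$ has exactly $L$ cycles (the $L$ faces); transitivity of $\langle\sigma,\tau\rangle$ is automatic once $\sigma$ is an $N$-cycle. The automorphism group of the dessin is the centralizer $C_{S_N}(\langle\sigma,\tau\rangle)$, which is contained in $C_{S_N}(\sigma)=\langle\sigma\rangle$ and is therefore cyclic of some order $r\mid N$ --- this is the source of the cyclicity asserted in the abstract. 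Since this centralizer is precisely the $S_N$-stabilizer of $(\sigma,\tau)$ for the conjugation action, the $S_N$-orbit of a pair whose automorphism group has order $r$ consists of exactly $N!/r$ pairs, so $|\mathfrak D_{1,r}|=(r/N!)\,T_{=r}$, where $T_{=r}$ is the number of pairs $(\sigma,\tau)$ as above whose centralizer has order exactly $r$.

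Because that centralizer is cyclic of order dividing $N$, I would work with $U_d:=\sum_{d\mid r\mid N}T_{=r}$, the number of such pairs whose centralizer has order a multiple of $d$. A routine rearrangement shows that the statement of the Theorem is equivalent, by Möbius inversion over $D_N$, to the single family of identities $\varPsi_{N,L,n}=U_{N/n}/(N-1)!$ for $n\in D_N$; the ``in particular'' formula then follows from $\mathfrak D_1=\bigsqcup_{r\in D_N}\mathfrak D_{1,r}$ and the substitution $u=N/r$. So everything reduces to evaluating $U_{N/n}$. Fixing a semiregular $z\in S_N$ of order $k:=N/n$ (a product of $n$ disjoint $k$-cycles) and averaging the condition ``centralizer divisible by $k$'' over the conjugacy class of $z$ --- a qualifying pair being counted once for each of the $\varphi(k)$ elements of order $k$ in its cyclic centralizer --- gives $U_{N/n}=\dfrac{N!}{\varphi(k)\,k^{n}\,n!}\,|W|$, where $W$ is the set of pairs in $C_{S_N}(z)^2\cong(\mathbb Z/k\wr S_n)^2$ consisting of $N$-cycles $\sigma,\tau$ with $(\sigma\tau)^{-1}$ having $L$ cycles.

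Writing $\sigma=(\bar\sigma;\mathbf a)$ and $\tau=(\bar\tau;\mathbf b)$ with $\bar\sigma,\bar\tau\in S_n$ and $\mathbf a,\mathbf b\in(\mathbb Z/k)^n$, one has: $\sigma$ is an $N$-cycle iff $\bar\sigma$ is an $n$-cycle whose monodromy $\sum_i a_i\in\mathbb Z/k$ is a generator (and likewise for $\tau$ --- this also makes the associated degree-$k$ cover connected, so no further transitivity condition is needed), while $(\sigma\tau)^{-1}$ splits each cycle $D$ of $(\bar\sigma\bar\tau)^{-1}$ into $\gcd(\mathrm{mon}(D),k)$ cycles, where $\mathrm{mon}(D)\in\mathbb Z/k$ is the corresponding combination of the entries of $\mathbf a,\mathbf b$. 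Sorting the pairs by the number $m$ of cycles of $(\bar\sigma\bar\tau)^{-1}$ and by the partition $\lambda$ of $L$ whose parts are the $\gcd(\mathrm{mon}(D),k)$ (so $\lambda\in\varLambda_m^{(n)}$), the count $|W|$ factors, for each $(m,\lambda)$, as (the number $M_m$ of pairs of $n$-cycles $(\bar\sigma,\bar\tau)$ with $(\bar\sigma\bar\tau)^{-1}$ having $m$ cycles) $\times$ (a power of $k$ accounting for the gauge freedom $(\mathbb Z/k)^{n-1}$ and the handle monodromies $(\mathbb Z/k)^{2g}$, $g=(n-m)/2$, i.e.\ $k^{2n-m-1}$) $\times$ (the number of admissible monodromy tuples of type $\lambda$). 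For $M_m$, only the hook characters of $S_n$ are nonzero at an $n$-cycle, so the class-algebra formula, the classical content identity $\sum_{\rho}\chi^{\kappa}(\rho)\,t^{(\#\text{cycles of }\rho)}=\chi^{\kappa}(1)\prod_{(i,j)\in\kappa}(t+j-i)$ (applied to hooks $\kappa$, $\rho$ ranging over $S_n$), and a hockey-stick summation collapse $M_m$ to $\dfrac{(n-1)!}{n(n+1)}\,[t^m]\bigl(\prod_{i=0}^{n}(t+i)-\prod_{i=0}^{n}(t-i)\bigr)=\dfrac{(n-1)!}{n(n+1)}\,f^{(n)}_{n-m+1}$, the factor $1-(-1)^{n-m+1}$ hidden in $f^{(n)}_{n-m+1}$ being exactly the parity obstruction $n\equiv m\pmod 2$, i.e.\ integrality of the quotient genus $(n-m)/2$.

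It remains to count the admissible monodromy tuples $(\alpha_0,\alpha_1,\gamma_1,\dots,\gamma_m)\in(\mathbb Z/k)^{m+2}$ with $\alpha_0+\alpha_1+\sum_t\gamma_t=0$, $\gcd(\alpha_0,k)=\gcd(\alpha_1,k)=1$, and $\gcd(\gamma_t,k)=l_t$ for a chosen ordering $(l_1,\dots,l_m)$ of $\lambda$. Expanding the relation $\sum=0$ over the characters of $\mathbb Z/k$ and invoking the Chinese remainder theorem makes this count multiplicative over the primes $p\in P_{N/n}$, and on each $p$-part a short inclusion--exclusion over which of the $\gamma_t$ are divisible by $p$ produces precisely $A_{n,\lambda,p}$ --- with $i_0(n,\lambda,p)$ and $i(n,\lambda,p)$ as defined and the term $(-1/p)^{i_0(n,\lambda,p)+1}$ arising from the two unit conditions on $\alpha_0,\alpha_1$ --- so that the count for one ordering is $\varphi(k)k^{m}/\varDelta_\lambda$ times $\prod_p A_{n,\lambda,p}$; the $\tfrac{m!}{\nu_1!\cdots\nu_L!}$ orderings of $\lambda$ and the factor $1/\varDelta_\lambda$ then assemble $B_\lambda$, and collecting all powers of $k=N/n$ one finds $|W|=\dfrac{\varphi(k)\,k^{n}\,n!}{N}\,\varPsi_{N,L,n}$, which rearranges to the required $\varPsi_{N,L,n}=U_{N/n}/(N-1)!$. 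Feeding this back through the Möbius inversion of the second paragraph yields both displayed formulas. I expect the main obstacle to be exactly this last step: keeping the connectivity requirement compatible with the prescribed cycle-splitting pattern $\lambda$ while running the prime-by-prime inclusion--exclusion so that it genuinely collapses into the closed form $A_{n,\lambda,p}$, and tracking every normalization (the order of $\mathbb Z/k\wr S_n$, the factor $\varphi(k)$, the gauge and handle groups, the conjugacy-class size and the orbit size $N!/r$) carefully enough that they telescope to the clean prefactor $N^n/(n^{n+1}(n+1))$ and the clean external factor $r/N$.
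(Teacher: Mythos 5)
Your proposal is correct and reaches the stated formula, but the heart of the argument is organized differently from the paper's. The outer skeleton is the same in substance: you pass to permutation pairs, use that the automorphism group is $C(\sigma)\cap C(\tau)\subseteq\langle\sigma\rangle$, count classes by orbit--stabilizer, and M\"obius--invert over $D_N$; your reduction to the identity $\varPsi_{N,L,n}=U_{N/n}/(N-1)!$ is exactly the paper's Proposition~2 ($|T\cap C(\sigma_0^n)|=\varPsi_{N,L,n}$), since fixing $\sigma=\sigma_0$ (as the paper does via Lemmas~5--6) instead of averaging over the conjugacy class of a semiregular $z$ gives $U_{N/n}=(N-1)!\,|T\cap C(\sigma_0^n)|$. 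Where you genuinely diverge is in proving that count: the paper parametrizes the $N$-cycles in $C(\sigma_0^n)$ explicitly by an $n$-cycle $\omega$, shifts $\boldsymbol u\in E^{n-1}$ and $b\in E^\times$ (Lemmas~1--2), works out the cycle decomposition of $\rho\sigma_0$ by hand (Lemma~3), and counts the shift data with prescribed gcd's by inclusion--exclusion (Lemma~4), quoting Zagier for $|R_m|$; you instead use the wreath-product picture $C(z)\cong\mathbb Z/k\wr S_n$, push the pair down to $(\bar\sigma,\bar\tau)\in S_n^2$ together with local monodromies $(\alpha_0,\alpha_1,\gamma_1,\dots,\gamma_m)$, derive the $S_n$-level count $M_m$ by hook characters and the content identity (equivalent to the paper's citation of Zagier), and run the prime-by-prime inclusion--exclusion on the monodromy tuple, which after rescaling by a unit is literally the paper's Lemma~4; I checked your normalizations ($N!/(\varphi(k)k^n n!)$, the $\varphi(k)k^m\varDelta_\lambda^{-1}\prod_p A_{n,\lambda,p}$ per ordering, the assembly of $B_\lambda$, and the powers of $k$) and they are consistent with $\varPsi_{N,L,n}$. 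What your route buys is structural clarity --- the quotient genus $(n-m)/2$ and the parity factor inside $f^{(n)}_{n-m+1}$ appear for conceptual reasons --- at the cost of one step you only justify heuristically: that $(\mathbf a,\mathbf b)\mapsto(\alpha_0,\alpha_1,\gamma_1,\dots,\gamma_m)$ surjects onto the sum-zero subgroup with uniform fibers of size $k^{2n-m-1}$. This is true and needs only a short argument (it is a homomorphism of abelian groups, and any character killing its image must satisfy $c_0=c_1=e_1=\cdots=e_m$ because each $a_j$ and each $b_i$ occurs in exactly one $\gamma_t$), but it must be supplied; the paper's Lemmas~1--3 are precisely the hands-on, self-contained substitute for it, at the price of heavier explicit bookkeeping.
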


\begin{Remark}
We define automorphism $f$ of dessin $(X,\phi^{-1}([0,1]))$ orientation preserving homeomorphism such that $f=f\circ \phi$.
orientation preserving homeomorphism 

A dessin with $N$ edges and two vertices has automorphism group $($ isomorphic to a cyclic group $)$ of order $N$ if and only if the dessin is regular, that is, the natural action of its automorphism group on its edges is transitive $($ for counting the equivalence classes of regular dessins with a given automorphism group, see Jones \cite[Section~3]{J} $)$.
\end{Remark}

\begin{Example}
We consider the case $N$ is an odd prime and take a divisor $r$ of $N$ as $r=N$. 
For each positive integer $t$ less than $N-1$, let $X_t$ be the compact Riemann surface associate to the curve $x(x-1)^{t}=y^{N}$ and 
$\phi_{t}$ be the Belyi function from $X$ to $\widehat{\mathbb C}:\phi_t (x,y)=x$.
Then  the dessin $(X_t,\phi_t^{-1}([0,1]))$ has $N$ edges, two vertices, one face, automorphism group isomorphic to $\langle \sigma_0\rangle$,
and a permutation representation pair $(\sigma_0, \sigma_0^t)$ $($ see Section~2 for the notation $)$.
Above result is due to the known fact that the automorphism group of a dessn is isomorphic to the centralizer of the the monodromy group of the dessin $( \cite[Theorem~4.40]{G-G} )$.
 By Theorem~1, $|\mathfrak D_{1,N}|=N-2$ $($ see also Corollary \ref{example} $)$.\
 Actually, ${N-2}$ dessins $(X_t, \phi_t^{-1}([0,1]))$ are all representatives of the above.
\end{Example}

\begin{Example}Using Mathematica, we give a numerical table for each of $|\mathfrak D_{1,r}|$ and $|\mathfrak D_{1}|$ {\rm (Table 1)}.\\
\begin{table}[h]\caption{Examples of Theorem~1}
\begin{center}
\begin{tabular}{|c|c|c||c|}
\hline
$N$&$L$&$r$&$|\frak D_{1,r}|$\\ \hline

$ $&$2$&$1$&$0$\\ 
$\raisebox{1ex}[1ex][0ex]{2}$&$2$&$2$&$1$\\ \hline

$$&$1$&$1$&$0$\\
$$&$1$&$3$&$1$\\
$\raisebox{1ex}[1ex][0ex]{3}$&$3$&$1$&$0$\\
$$&$3$&$3$&$1$\\ \hline

$$&$2$&$1$&$1$\\
$$&$2$&$2$&$0$\\
$$&$2$&$4$&$1$\\
$\raisebox{1ex}[1ex][0ex]{4}$&$4$&$1$&$0$\\
$$&$4$&$2$&$0$\\
$$&$4$&$4$&$1$\\ \hline

$$&$1$&$1$&$1$\\
$5$&$1$&$5$&$3$\\
$$&$3$&$1$&$3$\\ \hline

\end{tabular}
\begin{tabular}{|c|c|c||c|}
\hline
$N$&$L$&$r$&$|\frak D_{1,r}|$\\ \hline

$5$&$3$&$5$&$0$\\ \hline

$$&$2$&$1$&$13$\\
$$&$2$&$2$&$1$\\
$$&$2$&$3$&$1$\\
$$&$2$&$6$&$	1$\\
$$&$4$&$1$&$5$\\
$$&$4$&$2$&$1$\\
$\raisebox{1ex}[1ex][0ex]{6}$&$4$&$3$&$1$\\
$$&$4$&$6$&$0$\\
$$&$6$&$1$&$0$\\
$$&$6$&$2$&$0$\\
$$&$6$&$3$&$0$\\
$$&$6$&$6$&$1$\\ \hline

$$&$1$&$1$&$25$\\
$\raisebox{1ex}[1ex][0ex]{7}$&$1$&$7$&$5$\\ \hline

\end{tabular}
\begin{tabular}{c}
$$
\end{tabular}
\begin{tabular}{|c|c||c|}
\hline
$N$&$L$&$|\frak D_{1}|$\\ \hline

$2$&$2$&$1$\\ \hline

$$&$1$&$1$\\
$\raisebox{1ex}[1ex][0ex]{3}$&$3$&$1$\\ \hline

$$&$2$&$2$\\
$\raisebox{1ex}[1ex][0ex]{4}$&$4$&$1$\\ \hline

$$&$1$&$4$\\
$5$&$3$&$3$\\
$$&$5$&$1$\\ \hline

$$&$2$&$16$\\
$6$&$4$&$7$\\
$$&$6$&$1$\\ \hline

$$&$1$&$30$\\
$$&$3$&$67$\\
$\raisebox{1ex}[1ex][0ex]{7}$&$5$&$10$\\
$$&$7$&$1$\\ \hline
\end{tabular}
\end{center}
\end{table}
\end{Example}

We define the genus $g(X',D')$ of a dessin $(X',\mathcal D')$, by the genus of the Riemann surface $X'$. 
It follows that the genera of two dessins are equal if the dessins are equivalent. 
As the Riemann-Hurwitz formula shows, the number of edges of a dessin with $L$ faces and two vertices equals $N$ if and only if $2g(X',D')+L=N$ (cf.\ \cite[Proposition 4.10]{G-G}). 

\begin{Remark}
Let $N,L$ and $r$ satisfy the same condition of Theorem~1 and $g$ be a non-negative integer satisfing $2g= N-L$.
We can understand $|\mathfrak D_{1,r}|$ for genus $g$ as below:
for any $r\in D_{2g+L}$, the number of equivalence classes of dessins of genus $g$ with $L$ faces and two vertices whose automorphism groups are of order $r$. 
Obviously, in the case
$L \not\equiv N \pmod 2,$
$|\mathfrak D_{1,r}|=|\mathfrak D_{1}|=0.$
\end{Remark} 

Next, for each $(j,n)\in\mathbb N\times\mathbb N$ with $j\leq n$, we put 
\begin{equation}
\varSigma_j^{(n)}=\frac{n!}{(j-1)!}\sum_{m=0}^{n-j-1}\frac{(-1)^{m}}{m!\, (j+m)(n-j-m)}+(-1)^{n-j}\binom{n-1}{j-1}-1,
\end{equation}
so that $\varSigma_n^{(n)}=0$.
We also put 
\begin{equation}
\varSigma_0^{(n)}=(n-1)!-1.
\end{equation}
Note that $\varSigma^{(n)}_{n-1}=0$ and that $\varSigma^{(n)}_{n-2}=0$ if $n\geq2$. 
Let $h \in \mathbb N$ satisfy $h\leq N$. Let $D^*$ denote the set of all $n\in D_N$ such that $N/n$ divides $h$. 
For each $n\in D^*$, we define 
\begin{align*}
\varUpsilon_{N,h,n} & =\sum_{m=hn/N}^{n-1}\binom{m}{hn/N}\frac{\varphi(N/n)N^{n-m-1}}{n^{n-m-1}}\left(\frac{N}{n}-1\right)^{m-hn/N}\left(\varSigma_m^{(n)}-\varSigma_{m+1}^{(n)}\right)\\
& \ \ \ +\binom{n}{hn/N}\left(\frac{\varphi(N/n)n}{N}\left(\left(\frac{N}{n}-1\right)^{n-hn/N}-\left(-1\right)^{n-hn/N}\right)+(-1)^{n-hn/N}\right),
\end{align*}
where $\varphi$ denotes the Euler function and it is understood that $0^0=1$. 

Let $\mathfrak D_2 = \mathfrak D_2 (N,h)$ denote the set of all equivalence classes of dessins with $N$ edges, $h$ faces of degree $2$ and two vertices. 
For each $r\in D_N$, let 
$$D_r^*=D_r\cap D^*=\{n\in D_r;\ N/n\mid h\}$$ 
and let $\mathfrak D_{2,r}=\mathfrak D_{2,r}(N,h)$ denote the set of equivalence classes in $\mathfrak D_2$ whose representatives have the automorphism group of order $r$. 
The second main results of this paper is as follows.

\begin{Theorem}
For any $r\in D_N$, 
$$|\mathfrak D_{2,r}|=\frac{r}{N}\sum_{n\in D_{N/r}^*}\mu(N/(nr))\varUpsilon_{N,h,n}.$$
Consequently, 
$$|\mathfrak D_2|=\sum_{r\in D_N}|\mathfrak D_{2,r}|=\sum_{u\in D_N}\frac{1}{u}\sum_{n\in D_u^*}\mu(u/n)\varUpsilon_{N,h,n}.$$
\end{Theorem}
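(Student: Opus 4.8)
The plan is to mirror the structure that (by the parallel wording of the two theorems) must underlie the proof of Theorem~1, replacing the face-degree-partition bookkeeping by the degenerate situation in which every face has degree exactly $2$. The starting point is the one-to-one correspondence, advertised in the abstract, between equivalence classes of dessins with $N$ edges and equivalence classes of transitive pairs $(\sigma_0,\sigma_1)\in S_N\times S_N$ under simultaneous conjugation. Fixing two vertices means $\sigma_0$ has exactly two cycles; requiring that all $h$ faces have degree $2$ means the product $\sigma_0\sigma_1$ (or $\sigma_1\sigma_0$, per the convention fixed in Section~2) is a fixed-point-free involution on the support of those faces, i.e.\ it is a product of $h$ transpositions and is determined up to the remaining degrees of freedom. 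So the first step is to translate ``$N$ edges, $h$ faces of degree $2$, two vertices, automorphism group cyclic of order $r$'' into a precise counting problem about such pairs, and to recall that the automorphism group of the dessin is the centralizer of the monodromy group $\langle\sigma_0,\sigma_1\rangle$ acting on $\{1,\dots,N\}$ (the fact cited in Example~2 from \cite[Theorem~4.40]{G-G}), which for a transitive action is cyclic of order equal to the number of orbits of the centralizer, hence determined by the ``imprimitivity block size'' $r$.

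Second, I would run the standard reduction by the order of the automorphism group. Writing $c_r$ for the number of (isomorphism classes of) such dessins with automorphism group of order divisible by a given $r$, and noting that a dessin with automorphism group of order $r$ is precisely an $r$-fold ``cyclic quotient cover'' of a dessin with trivial automorphism group but on $N/n$ edges for the various $n$, one gets a relation of the shape $c_r=\sum_{n}(\text{something})$ that is inverted by Möbius inversion over the divisor lattice to isolate $|\mathfrak D_{2,r}|$; the factor $\tfrac{r}{N}$ and the sum $\sum_{n\in D^*_{N/r}}\mu(N/(nr))$ in the statement are exactly the footprint of this inversion, and the condition $n\in D^*$ (that $N/n\mid h$) is just the necessary divisibility for the quotient dessin to still have all faces of degree $2$ after the $N/n$-fold unfolding. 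The passage from $|\mathfrak D_{2,r}|$ to $|\mathfrak D_2|=\sum_r|\mathfrak D_{2,r}|$ is then formal, and the displayed re-indexing $\sum_{u\in D_N}\frac1u\sum_{n\in D^*_u}\mu(u/n)\varUpsilon_{N,h,n}$ follows by substituting $u=nr$ and swapping the order of summation, exactly as in Theorem~1.

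Third — and this is where the real work sits — one must show that the ``raw count'' associated with a fixed level $n$ equals $\varUpsilon_{N,h,n}$. Concretely: count transitive pairs $(\sigma_0,\sigma_1)$ on $N/n$ points (after the quotient) with $\sigma_0$ having two cycles, with the product an involution that is a product of exactly $hn/N$ transpositions, and with trivial centralizer, then fold back by the $n$-fold cyclic cover. The term $\binom{m}{hn/N}$ together with $(N/n-1)^{m-hn/N}$ is the count, for a given ``profile'' $m$, of ways the second permutation interacts with the two cycles of lengths summing to $N/n$ so as to produce the prescribed number of $2$-faces while leaving the rest unconstrained; the quantities $\varSigma_m^{(n)}$ and the differences $\varSigma_m^{(n)}-\varSigma_{m+1}^{(n)}$ are exactly the inclusion–exclusion coefficients that cut down from ``all pairs'' to ``transitive pairs'', which is why $\varSigma_n^{(n)}=\varSigma_{n-1}^{(n)}=\varSigma_{n-2}^{(n)}=0$ (a single long cycle, and a few near-maximal cases, are automatically transitive); the Euler factor $\varphi(N/n)$ counts the compatible choices of the cyclic-cover datum and the trailing $(-1)^{n-hn/N}$ terms are the boundary corrections. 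I would establish the formula for $\varSigma_j^{(n)}$ first as a self-contained lemma — presumably it is the number of non-transitive (or of transitive) pairs with a cycle-type constraint, proved by a straightforward exponential-generating-function or direct inclusion–exclusion argument over set partitions — and then assemble $\varUpsilon_{N,h,n}$ by summing over the profile $m$. The main obstacle is purely combinatorial stamina: getting the inclusion–exclusion over the lattice of ``block systems'' to line up termwise with the closed form for $\varSigma_j^{(n)}$, and correctly handling the degenerate small-$n$ cases (and the convention $0^0=1$) so that the stated identities $\varSigma_n^{(n)}=0$ etc.\ hold on the nose; once the $n$-level count is pinned down, Theorem~2 follows from Theorem~1-style Möbius inversion with no further ideas needed.
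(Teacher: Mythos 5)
Your outer frame --- pass to permutation representation pairs, normalize the first component to $\sigma_0$, stratify by the order of $\langle\sigma_0\rangle\cap C(\tau)$, count the $\tau$ commuting with $\sigma_0^n$, and finish by M\"obius inversion together with the orbit count (each equivalence class meets $\{\sigma_0\}\times S_N$ in exactly $u$ pairs, Lemma~6) --- is indeed how the paper proves Theorem~2, and your re-indexing for $|\mathfrak D_2|$ is fine. But your translation of the geometric hypotheses into permutation conditions is wrong in two places, and the errors propagate into exactly the part you leave to ``combinatorial stamina''. First, by Proposition~1(iii) a face of degree $2$ corresponds to a \emph{fixed point} of $\tau\sigma_0$, not to a transposition; moreover the theorem asks for exactly $h$ faces of degree $2$ with the remaining faces unconstrained, not for every face to have degree $2$. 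The correct set is $V=\{N\text{-cycles }\tau:\ \tau\sigma_0\ \text{has exactly } h \text{ fixed points}\}$; a count built on ``$\tau\sigma_0$ is a product of $h$ transpositions'' computes a different quantity and cannot match $\varUpsilon_{N,h,n}$. Second, ``two vertices'' means one white and one black vertex, so \emph{both} components of the pair are $N$-cycles; your ``$\sigma_0$ has exactly two cycles'' is a different condition, and since $\tau$ is an $N$-cycle transitivity is automatic. Consequently your reading of $\varSigma_j^{(n)}$ as inclusion--exclusion coefficients that cut down to transitive pairs is not what these numbers are: in the paper (Lemmas 8--10) $\varSigma_j^{(n)}$ is the number of $n$-cycles $\omega\ne\sigma_{0,n}^{-1}$ in $S_n$ such that $\omega\sigma_{0,n}$ fixes at least $j$ points, so that $\varSigma_m^{(n)}-\varSigma_{m+1}^{(n)}$ counts those with exactly $m$ fixed points; the vanishing $\varSigma_{n-1}^{(n)}=\varSigma_{n-2}^{(n)}=0$ is a cycle-structure fact, not ``automatic transitivity''.

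The second genuine gap is that the heart of the theorem, Proposition~3 ($|V\cap C(\sigma_0^n)|=\varUpsilon_{N,h,n}$ for $n\in D^*$, and $0$ when $N/n\nmid h$), is never actually proved, and the roadmap you sketch for it would not work as stated. The paper's route is: parametrize the $N$-cycles $\tau\in C(\sigma_0^n)$ by triples $(\omega,\boldsymbol u,b)$ with $\omega$ an $n$-cycle in $S_n$, $\boldsymbol u\in E^{n-1}$, $b\in E^\times$ (Lemmas 1--2); read off the cycle decomposition of $\tau\sigma_0$ from this data (Lemma~3), so that fixed points of $\tau\sigma_0$ correspond to indices $j$ with $s_j=1$ and $d_j=N/n$ --- this is where the divisibility $N/n\mid h$, the binomials $\binom{m}{hn/N}$, the powers $(N/n-1)^{m-hn/N}$ and $N^{n-m-1}/n^{n-m-1}$, and the factor $\varphi(N/n)=|E^\times|$ come from (Lemma~11), with the count of admissible $\omega$ supplied by Lemma~10. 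Your sketch also inserts a ``trivial centralizer'' requirement into the level-$n$ count; that is wrong, since $|V\cap C(\sigma_0^n)|$ imposes no exactness on the automorphism order --- exactness is recovered only afterwards by the M\"obius inversion --- and your justification of that inversion via ``$r$-fold cyclic quotient covers of a dessin with trivial automorphism group'' is inaccurate as well (the quotient need not have trivial automorphism group); the paper instead uses that $\tau\in C(\sigma_0^u)$ if and only if $[\langle\sigma_0\rangle:\langle\sigma_0\rangle\cap C(\tau)]$ divides $u$. You also interchange the roles of $n$ and $N/n$ in the quotient picture. So beyond the correct outer shell, the proposal both mis-specifies the counting problem and omits the evaluation that the formula $\varUpsilon_{N,h,n}$ actually encodes.
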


\begin{Example}
For example, the case $N=8$ and $h=4$, 
$$|\mathfrak D_2|=10.$$
This is the number of all inequivalent dessins with $8$ edges, $4$ faces of degree $2$ and two vertices.
Permutation representation pairs corresponding to the 10 dessins are
\begin{align*}
&(\sigma_0, (1\ 8\ 7\ 6\ 5\ 2\ 3\ 4)),\ (\sigma_0, (1\ 8\ 7\ 6\ 2\ 3\ 5\ 4)),\\
&(\sigma_0, (1\ 8\ 7\ 2\ 3\ 6\ 5\ 4)),\ (\sigma_0, (1\ 8\ 2\ 3\ 7\ 6\ 5\ 4)),\\
&(\sigma_0, (1\ 8\ 7\ 6\ 2\ 4\ 3\ 5)),\ (\sigma_0, (1\ 8\ 7\ 2\ 5\ 4\ 3\ 6)),\\
&(\sigma_0, (1\ 8\ 7\ 2\ 4\ 3\ 6\ 5)),\ (\sigma_0, (1\ 8\ 2\ 4\ 3\ 7\ 6\ 5)),\\
&(\sigma_0, (1\ 8\ 2\ 5\ 4\ 3\ 7\ 6)),\ (\sigma_0, (1\ 8\ 3\ 2\ 5\ 4\ 7\ 6)),
\end{align*} 
where $\sigma_0=(1\ 2\ 3\ 4\ 5\ 6\ 7\ 8)$.
\end{Example}

\begin{Example}Using Mathematica, we give a numerical table for each of $|\mathfrak D_{2,r}|$ and $|\mathfrak D_2|$ {\rm (Table 2)}.\\
\begin{table}[h]\caption{Examples of Theorem~2}
\begin{center}
\begin{tabular}{|c|c|c||c|}
\hline
$N$&$h$&$r$&$|\frak D_{2,r}|$\\ \hline

$$&$0$&$1$&$0$\\ 
$$&$0$&$2$&$0$\\
$$&$1$&$1$&$0$\\
$\raisebox{1ex}[1ex][0ex]{2}$&$1$&$2$&$0$\\
$$&$2$&$1$&$0$\\
$$&$2$&$2$&$1$\\ \hline

$$&$0$&$1$&$0$\\
$$&$0$&$3$&$1$\\
$$&$1$&$1$&$0$\\
$$&$1$&$3$&$0$\\
$\raisebox{1ex}[1ex][0ex]{3}$&$2$&$1$&$0$\\
$$&$2$&$3$&$0$\\ 
$$&$3$&$1$&$0$\\
$$&$3$&$3$&$1$\\ \hline

$4$&$0$&$1$&$0$\\ \hline

\end{tabular}
\begin{tabular}{|c|c|c||c|}
\hline
$N$&$h$&$r$&$|\frak D_{2,r}|$\\ \hline

$$&$0$&$2$&$0$\\
$$&$0$&$4$&$	1$\\
$$&$1$&$1$&$1$\\
$$&$1$&$2$&$0$\\
$$&$1$&$4$&$0$\\
$$&$2$&$1$&$0$\\
$$&$2$&$2$&$0$\\
$\raisebox{1ex}[1ex][0ex]{4}$&$2$&$4$&$0$\\
$$&$3$&$1$&$0$\\
$$&$3$&$2$&$0$\\
$$&$3$&$4$&$0$\\
$$&$4$&$1$&$0$\\
$$&$4$&$2$&$0$\\ 
$$&$4$&$4$&$1$\\ \hline

$5$&$0$&$1$&$1$\\ \hline

\end{tabular}
\begin{tabular}{c}
$$
\end{tabular}
\begin{tabular}{|c|c||c|}
\hline
$N$&$h$&$|\frak D_{2}|$\\ \hline

$$&$0$&$0$\\ 
$2$&$1$&$0$\\
$$&$2$&$1$\\ \hline

$$&$0$&$1$\\
$$&$1$&$0$\\ 
$\raisebox{1ex}[1ex][0ex]{3}$&$2$&$0$\\
$$&$3$&$1$\\ \hline

$$&$0$&$1$\\
$$&$1$&$1$\\
$4$&$2$&$0$\\
$$&$3$&$0$\\
$$&$4$&$1$\\ \hline

$$&$0$&$4$\\
$5$&$1$&$1$\\
$$&$2$&$2$\\ \hline
\end{tabular}
\end{center}
\end{table}
\end{Example}

On the other hand, we shall deduce the following result from Theorem~1.  

\begin{Theorem}
For any $r\in D_N$, let $\mathfrak D'_{1,r}$ denote the set of equivalence classes of dessins with $N$ edges, $L$ white vertices, one black vertex and one face whose automorphism groups are of order $r$, and $\mathfrak D''_{1,r}$ denote the set of equivalence classes of dessins with $N$ edges, $L$ black vertices, one white vertex and one face whose automorphism groups are of order $r$. 
Then 
$$\left|\mathfrak D'_{1,r}\right|=\left|\mathfrak D''_{1,r}\right|=\frac{r}{N}\sum_{n\in D_{N/r}}\mu(N/(nr))\varPsi_{N,L,n}.$$ 
\end{Theorem}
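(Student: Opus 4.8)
The plan is to reduce Theorem~3 to Theorem~1 via an explicit bijection on the level of permutation pairs, exploiting the duality of dessins that exchanges the roles of vertices, edges, and faces. Recall from the correspondence mentioned in the abstract that an equivalence class of dessins with $N$ edges is represented by a pair $(\sigma,\alpha)$ of permutations in the symmetric group $\mathfrak S_N$ with $\langle\sigma,\alpha\rangle$ transitive, considered up to simultaneous conjugation; here the cycles of $\sigma$ correspond to the black vertices, the cycles of $\alpha$ to the white vertices, and the cycles of $(\sigma\alpha)^{-1}$ (or $\sigma\alpha$, depending on the chosen convention) to the faces. A dessin counted by $\mathfrak D_1(N,L)$ is one with two black-and-white vertices together (so $\sigma$ and $\alpha$ between them have two cycles), $L$ faces, and automorphism group — the centralizer of $\langle\sigma,\alpha\rangle$ in $\mathfrak S_N$ — cyclic of order $r$. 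A dessin counted by $\mathfrak D'_{1,r}$ has $L$ white vertices, one black vertex, one face: so $\sigma$ is an $N$-cycle, $\alpha$ has $L$ cycles, and $\sigma\alpha$ is an $N$-cycle.

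First I would make the bijection between the two combinatorial data sets precise. The key observation is that the three "coordinates" of a dessin — black vertices, white vertices, faces — are governed by the three permutations $\sigma$, $\alpha$, $\varphi:=(\sigma\alpha)^{-1}$, which satisfy $\sigma\alpha\varphi=1$. Cyclically permuting the triple $(\sigma,\alpha,\varphi)\mapsto(\alpha,\varphi,\sigma)$ (or its inverse) gives a new pair $(\alpha,\varphi)$ with $\alpha\varphi\sigma=1$ and $\langle\alpha,\varphi\rangle=\langle\sigma,\alpha\rangle$ still transitive; this operation descends to a bijection on equivalence classes of dessins. Concretely, a dessin whose black vertices $+$ white vertices number two, with $L$ faces, is sent by one such cyclic shift to a dessin with (new) black vertices $=$ old faces $= L$, new white vertices $=$ old black vertices, new faces $=$ old white vertices, so that the new black$+$face count is two; and the other cyclic shift realizes the black/white-swapped version. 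I would need to check that the "two vertices" condition of Theorem~1 — which allows the split $1+1$, $2+0$, $0+2$ of the two cycles between $\sigma$ and $\alpha$ — matches exactly, under the shift, the condition "$L$ white, one black, one face" (respectively "$L$ black, one white, one face"), including the degenerate cases; this is a small case analysis but is where one must be careful about conventions and about dessins with a vertex of the "other colour" absent.

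Next I would verify that the automorphism group is preserved. Since $\mathrm{Aut}$ of the dessin is the centralizer $C_{\mathfrak S_N}(\langle\sigma,\alpha\rangle)$, and the cyclic shift replaces $\langle\sigma,\alpha\rangle$ by the equal subgroup $\langle\alpha,\varphi\rangle$, the centralizer — hence the automorphism group and in particular its order $r$ — is literally unchanged. Therefore the cyclic-shift bijection restricts to a bijection $\mathfrak D_{1,r}(N,L)\leftrightarrow\mathfrak D'_{1,r}$ and, using the other shift together with the black$\leftrightarrow$white symmetry (transpose: $(\sigma,\alpha)\mapsto(\alpha,\sigma)$, which also preserves transitivity and the centralizer), a bijection $\mathfrak D_{1,r}(N,L)\leftrightarrow\mathfrak D''_{1,r}$. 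Combining these with the formula for $|\mathfrak D_{1,r}|$ from Theorem~1 yields
$$\left|\mathfrak D'_{1,r}\right|=\left|\mathfrak D''_{1,r}\right|=\left|\mathfrak D_{1,r}(N,L)\right|=\frac{r}{N}\sum_{n\in D_{N/r}}\mu(N/(nr))\,\varPsi_{N,L,n},$$
which is the assertion.

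The main obstacle I anticipate is purely bookkeeping rather than conceptual: pinning down the precise dictionary between "dessin with two vertices" (where the two vertices may be both black, both white, or one of each) and permutation triples $(\sigma,\alpha,\varphi)$, and checking that under the cyclic shift the degenerate configurations (e.g.\ a missing white vertex, corresponding to $\alpha=1$, or a face of degree covering all edges) land exactly on the configurations described in Theorem~3, with no off-by-one in the face/vertex counts and no spurious extra classes. Once that dictionary is fixed and the shift is seen to preserve transitivity and centralizers, the rest is immediate from Theorem~1.
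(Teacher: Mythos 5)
Your proposal is correct and is essentially the paper's own argument: the paper realizes your cyclic shifts of the triple $(\sigma,\tau,(\tau\sigma)^{-1})$ as the explicit involutions $\epsilon_1(\sigma,\tau)=(\tau\sigma,\tau^{-1})$ and $\epsilon_2(\sigma,\tau)=(\sigma^{-1},\tau\sigma)$ on permutation representation pairs, notes $C(\tau\sigma)\cap C(\tau^{-1})=C(\sigma)\cap C(\tau)=C(\sigma^{-1})\cap C(\tau\sigma)$ exactly as you do, and then invokes Proposition~1 and Theorem~1. The only bookkeeping you flag, the $2+0$ or $0+2$ vertex-colour splits, cannot occur since every edge of a bipartite dessin joins a white and a black vertex, so ``two vertices'' forces one of each and the dictionary is immediate.
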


There are some consequences of Theorem~3 similar to Corollary~1 such as: 


Furthermore Theorem~2 will yield the following result. 

\begin{Theorem}
Let $r\in D_N$. 
Let $\mathfrak D'_{2,r}$ denote the set of equivalence classes of dessins with $N$ edges, $h$ white vertices of degree $1$, one black vertex and one face whose automorphism groups are of order $r\,;$ let $\mathfrak D''_{2,r}$ denote the set of equivalence classes of dessins with $N$ edges, $h$ black vertices of degree $1$, one white vertex and one face whose automorphism groups are of order $r$. 
Then 
$$\left|\mathfrak D'_{2,r}\right|=\left|\mathfrak D''_{2,r}\right|=\frac{r}{N}\sum_{n\in D_{N/r}^*}\mu(N/(nr))\varUpsilon_{N,h,n}.$$
\end{Theorem}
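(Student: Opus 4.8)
The plan is to deduce Theorem~4 from Theorem~2 by exhibiting an explicit equivalence-preserving, automorphism-group-preserving bijection between the dessins counted on each side. Recall that a dessin with $N$ edges corresponds (up to equivalence) to a transitive pair $(\sigma,\alpha)$ of permutations in $S_N$ modulo simultaneous conjugacy, where the cycles of $\sigma$ give the black vertices, the cycles of $\alpha$ the white vertices, and the cycles of $(\sigma\alpha)^{-1}$ the faces; the automorphism group of the dessin is the centralizer of $\langle\sigma,\alpha\rangle$ in $S_N$, which is cyclic of order $r$ precisely when the dessin is counted in the order-$r$ stratum. Under this dictionary, a dessin with $h$ faces of degree $2$ and two vertices (counted by $\mathfrak D_2$) is a pair $(\sigma,\alpha)$ with $\sigma$ having two cycles, $\alpha$ having two cycles, and $(\sigma\alpha)^{-1}$ having exactly $h$ cycles of length $2$ and the rest of length $1$ — equivalently $(\sigma\alpha)^{-1}$ is an involution with $h$ transpositions. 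A dessin counted by $\mathfrak D'_{2,r}$ is a pair with $\alpha$ having $h$ fixed points together with other cycles summing appropriately, $\sigma$ a single $N$-cycle, and $(\sigma\alpha)^{-1}$ a single $N$-cycle.

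First I would set up the correspondence at the level of permutation triples. For dessins the natural object is the triple $(\sigma,\alpha,\beta)$ with $\sigma\alpha\beta=1$, transitive, considered up to simultaneous conjugacy; the three conjugacy invariants (cycle types of $\sigma$, $\alpha$, $\beta$) record black vertices, white vertices, faces. The key observation is that permuting the roles of $\sigma$, $\alpha$, $\beta$ — more precisely, passing from $(\sigma,\alpha,\beta)$ to $(\beta,\alpha^{-1},\sigma^{-1})$ or a similar triple obtained by a permutation of the three factors together with suitable inversions so the product relation is preserved — is a bijection on equivalence classes of dessins that preserves transitivity, preserves the monodromy group $\langle\sigma,\alpha\rangle=\langle\sigma,\alpha,\beta\rangle$ and hence the centralizer, and cyclically permutes the multiset $\{\text{black vertex type},\text{white vertex type},\text{face type}\}$. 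Applying this ``trialitarian'' relabelling to the stratum that Theorem~2 counts — two black vertices, two white vertices, $h$ faces of degree $2$ — and sending the ``faces'' slot to the ``white vertices'' slot, one lands in: $\alpha$ has $h$ cycles of length $2$ $\dots$ wait, that is not yet degree-$1$ white vertices. So the second step is to compose with the standard black/white (or vertex/face) duality that replaces degree-$2$ faces by degree-$1$ vertices: concretely, a dessin in which every face has degree $1$ or $2$ admits a canonical subdivision/contraction turning $h$ bivalent faces into $h$ univalent white vertices while merging the two original cycles on the other side into one — this is exactly the operation underlying why the formula $\varUpsilon_{N,h,n}$ built for ``$h$ faces of degree $2$, two vertices'' should equally count ``$h$ univalent white vertices, one black vertex, one face.'' I would make this precise by checking on the permutation triple that the composite of the triality relabelling and this duality has the advertised effect on cycle types.

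The key steps, in order: (1) recall and state the dessin $\leftrightarrow$ permutation-triple dictionary and the fact that the automorphism group is the centralizer of the monodromy group (already cited in the excerpt via \cite[Theorem~4.40]{G-G}); (2) prove that the $S_3$-action on triples $(\sigma,\alpha,\beta)$ with $\sigma\alpha\beta=1$ (by permutation-and-inversion of the three factors) descends to a bijection on equivalence classes of dessins with $N$ edges that permutes the triple (black, white, face) of cycle-type data and fixes each automorphism-group-order stratum; (3) identify, under the appropriate element of this $S_3$-action followed by the vertex/face duality for dessins all of whose relevant faces have degree $\le 2$, the set $\mathfrak D_{2,r}(N,h)$ with $\mathfrak D'_{2,r}$ and with $\mathfrak D''_{2,r}$; (4) conclude $|\mathfrak D'_{2,r}|=|\mathfrak D''_{2,r}|=|\mathfrak D_{2,r}(N,h)|$ and invoke Theorem~2 for the closed formula. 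The black/white swap $(\sigma,\alpha,\beta)\mapsto(\alpha,\sigma,\beta')$ gives the equality $|\mathfrak D'_{2,r}|=|\mathfrak D''_{2,r}|$ immediately, so really only one of them needs the full argument.

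The main obstacle I anticipate is step (3): being careful that the vertex/face duality is genuinely a bijection on \emph{equivalence classes} and that it interacts correctly with automorphisms. Passing from ``$h$ faces of degree $2$'' to ``$h$ white vertices of degree $1$'' is not literally an element of the $S_3$-triality — the triality would give $h$ white vertices of degree $2$ — so one needs the extra combinatorial move that, on a dessin whose faces all have degree $1$ or $2$, replaces each bivalent face by a univalent vertex and simultaneously fuses the two vertices on the other color into one while keeping the edge count $N$ fixed; I must verify this move is well-defined up to equivalence, is invertible, and commutes with the automorphism action so that the order-$r$ strata correspond. Equivalently, at the level of triples, one checks that requiring $\beta$ to be a product of $h$ transpositions and $N-2h$ fixed points, with $\sigma,\alpha$ each two-cycle-type, is in bijection (respecting simultaneous conjugacy) with requiring $\alpha$ to have $h$ fixed points, $\sigma$ an $N$-cycle, $\beta$ an $N$-cycle — this is a concrete computation with cycle structures that I would carry out explicitly, and it is the place where something could genuinely go wrong if the degree-$2$ hypothesis were not used. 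Everything else (transitivity preserved, monodromy group preserved, centralizer preserved, and hence the Möbius-inversion formula transported verbatim from Theorem~2) is formal.
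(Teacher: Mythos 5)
Your overall strategy --- transport the count from Theorem~2 via the permutation-and-inversion ($S_3$-type) relabelling of the triple $(\sigma,\tau,(\tau\sigma)^{-1})$, noting that it preserves transitivity and the centralizer $C(\sigma)\cap C(\tau)$ and hence the order-$r$ strata --- is exactly the mechanism the paper uses (its maps $\epsilon_1(\sigma,\tau)=(\tau\sigma,\tau^{-1})$ and $\epsilon_2(\sigma,\tau)=(\sigma^{-1},\tau\sigma)$, combined with Proposition~1 and the identity $C(\tau\sigma)\cap C(\tau^{-1})=C(\sigma)\cap C(\tau)=C(\sigma^{-1})\cap C(\tau\sigma)$). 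However, your proposal contains a genuine gap coming from a misreading of the dictionary of Proposition~1. In this paper ``two vertices'' means one white and one black vertex, so a class in $\mathfrak D_{2}$ is represented by a pair $(\sigma_0,\tau)$ with \emph{both} components $N$-cycles, not by a pair in which $\sigma$ and $\alpha$ each have two cycles; and by Proposition~1(iii) a face of degree $2m$ corresponds to an $m$-cycle of $\tau\sigma$, so ``$h$ faces of degree $2$'' means that $\tau\sigma_0$ has exactly $h$ \emph{fixed points} (this is precisely the definition of the set $V$), not $h$ transpositions.

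Because of this misidentification you conclude that the triality alone would produce ``$h$ white vertices of degree $2$'' and therefore interpose an extra ``vertex/face duality'' that replaces bivalent faces by univalent vertices and fuses two vertices of one colour --- an operation that you do not construct, whose compatibility with equivalence and with automorphism groups you only flag as the main obstacle, and which is not needed at all. Under the correct dictionary, $\epsilon_1$ sends $(\sigma_0,\tau)$ with $\tau\sigma_0$ having $h$ fixed points to $(\tau\sigma_0,\tau^{-1})$, whose white permutation $\tau\sigma_0$ has exactly $h$ fixed points (the $h$ white vertices of degree $1$), whose black permutation $\tau^{-1}$ is an $N$-cycle (one black vertex), and whose face permutation $\tau^{-1}\tau\sigma_0=\sigma_0$ is an $N$-cycle (one face); $\epsilon_2$ does the symmetric job for $\mathfrak D''_{2,r}$. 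So the correct proof is shorter than what you propose, and the step you single out as the crux (your step (3), the subdivision/contraction move and the claimed bijection between ``$\beta$ a product of $h$ transpositions, $\sigma,\alpha$ of two-cycle type'' and ``$\alpha$ with $h$ fixed points, $\sigma,\beta$ $N$-cycles'') is both unproved and founded on the wrong characterization of $\mathfrak D_{2,r}$; as stated it would not go through.
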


\begin{Remark}
As a classical enumeration problem of coverings of Riemann surfaces, Hurwitz enumeration problems has been studied $( \cite{M}, \cite{M-S-S} )$
and Mednykh gave a general formula $( \cite[Theorem 2.1]{M-S-S} )$.
We explain one of primary difference between our result and Mednykh's formula.
Mednykh fixed the ramification type $\sigma= (t^{p}_{s})$, while we will not forcus on any specific ramification type at $\infty$.
\end{Remark}

The formulas of main theorems are based on the analysis of $N$-cycles belonging to centralizer $C(\sigma_0^{n})$ of $\sigma_0^{n}$ in $S_N$, where $\sigma_0 = (1\ 2\ \dots N) \in S_N$ and $n \in D_N$.

We organize this paper as follows.
In Section~2, we recall some basic facts and prepare some lemmas and propositions to prove main theorems. The proofs of Theorem~1 and Theorem~3 are given in Section~3. The proof of Theorem~2 and Theorem~4 are given in Section~4. Finally, we mention some consequence of main theorems.

\begin{center}
{\bf Acknowledgements}
\end{center}
The author wishes thank to Professor Takuya Yamauchi for his support and his comments. %
She also would like to thank the referee for reading the article carefully and fruitful suggestions.

\section{Basic facts and statement of key propositions}
When a dessin 
is equivalent to another dessin, 
they have same numbers of vertices, edges and faces. 
Furthermore, for any integer $m>0$,
equivalent dessins have the same number of faces with degree $2m$. We define the degree of a face of a dessin d'enfants as the number of edges of the dessin incident to the face (cf.\ Lando and Zvonkin \cite[Definition 1.3.8]{L-Z}).

Now let $N$ be a fixed positive integer, and let $S_N$ denote as usual the symmetric group of degree $N$. 
A pair in $S_N\times S_N$ whose components generate a transitive subgroup of $S_N$ is called a permutation representation pair. 
Let $\mathcal P$ denote the set of all permutation representation pairs. 
We say that a permutation representation pair $(\sigma, \tau)$ is equivalent to a permutation representation pair $(\sigma', \tau')$, if $(\sigma, \tau)=(\rho\sigma'\rho^{-1}, \rho\tau'\rho^{-1})$ for some $\rho\in S_N$. 
An equivalence relation on $\mathcal P$ is then defined.  
We denote by $\mathfrak P$ the set of equivalence classes of pairs in $\mathcal P$. 
There exists an one-to-one correspondence between $\mathfrak{D}$ and $\mathfrak{P}$, here $\mathfrak{D}$ is the set of equivalence classes of dessins with $N$ edges.
(for the details, cf\ \cite[Chapter 4]{G-G}, \cite[Chapters 1, 2]{L-Z}).  

\begin{Proposition}
Let $\mathfrak c\in\mathfrak D$ and $(\sigma,\tau)\in \mathfrak{P}$ corresponding to $\mathfrak c$. 
Then, for any $m\in\mathbb N$,  
\renewcommand{\labelenumi}{{\rm (\roman{enumi})}}
\begin{enumerate}
\item the number of white vertices of degree $m$ in each dessin in $\mathfrak c$ is equal to the number of $m$-cycles in the cycle decomposition of $\sigma;$ 
\item the number of black vertices of degree $m$ in each dessin in $\mathfrak c$ is equal to the number of $m$-cycles in the cycle decomposition of $\tau;$ 
\item the number of faces of degree $2m$ in each dessin in $\mathfrak c$ is equal to the number of $m$-cycles in the cycle decomposition of $\tau\sigma$. 
\end{enumerate}
In particular, the number of white vertices, the number of black vertices and that of faces in each dessin in $\mathfrak c$ equal respectively the numbers of cycles in the cycle decompositions of $\sigma$, $\tau$ and $\tau\sigma$. 
\end{Proposition}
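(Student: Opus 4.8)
The plan is to unwind the standard dictionary between dessins d'enfants and permutation representation pairs, and then to track how vertices, edges and faces are encoded combinatorially on each side. Recall that to a dessin $(X,\mathcal D)$ with $N$ edges one associates the pair $(\sigma,\tau)\in S_N\times S_N$ as follows: label the $N$ edges by $1,\dots,N$, and let $\sigma$ record the cyclic (counterclockwise, say) ordering of the edges around each white vertex, and $\tau$ the cyclic ordering of the edges around each black vertex. Transitivity of $\langle\sigma,\tau\rangle$ reflects the connectedness of $X$. Conjugating the pair by $\rho\in S_N$ corresponds to relabelling the edges, so equivalence classes of dessins correspond to equivalence classes of pairs; this is the one-to-one correspondence $\mathfrak D\leftrightarrow\mathfrak P$ cited from \cite[Chapter 4]{G-G} and \cite[Chapters 1,2]{L-Z}, which I am free to assume.

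First I would prove (i) and (ii). By construction, the white vertices of the dessin are in bijection with the cycles of $\sigma$: each white vertex $v$ is incident to some set of edges, and walking counterclockwise around $v$ permutes those edges cyclically, which is precisely one cycle of $\sigma$; the degree of $v$ (the number of incident edges, counted with multiplicity) is the length of that cycle. Hence the number of white vertices of degree $m$ equals the number of $m$-cycles of $\sigma$. Since conjugate permutations have the same cycle type, this count depends only on the equivalence class $\mathfrak c$, and equivalent dessins (which correspond to the same class in $\mathfrak P$) indeed have the same such counts — consistent with the remarks opening Section~2. Statement (ii) is identical with $\tau$ and black vertices in place of $\sigma$ and white vertices.

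The substantive step is (iii), the identification of faces of degree $2m$ with $m$-cycles of $\tau\sigma$ (or $\sigma\tau$, according to the orientation conventions in force — I would fix the convention so that the product appearing is exactly $\tau\sigma$ as stated, or note that $\tau\sigma$ and $\sigma\tau$ are conjugate and hence have the same cycle type, so it does not matter for counting). The faces of the dessin, viewed inside $X$ with the bicolored graph removed, are discs, and the boundary walk of each face alternates between "edge-sides at a white vertex" and "edge-sides at a black vertex". Concretely, fixing the ribbon-graph structure, the cyclic sequence of edges read off along the boundary of a face is an orbit of the permutation $\tau\sigma$ acting on the $N$ edge-labels; each such orbit of length $m$ traverses $m$ edges from the white side and $m$ from the black side, so the face is bounded by $2m$ edge-sides and thus has degree $2m$ in the sense defined (each edge contributing once per side it presents to the face). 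Therefore faces of degree $2m$ correspond to $m$-cycles of $\tau\sigma$. The main obstacle here is bookkeeping: one must carefully justify that the boundary walk of a face is exactly a $\tau\sigma$-orbit and that the length–degree relation is the factor of two claimed; this is where a picture of a ribbon-graph corner and the next-edge rule "rotate at the current vertex, then switch vertices" does the real work. I would present this via the rotation-system description of an oriented ribbon graph and cite \cite[Chapter 1]{L-Z}, \cite[Chapter 4]{G-G} for the underlying face-counting lemma rather than reprove the topological foundations.

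Finally, the "in particular" clause is immediate: summing (i) over all $m$ gives the total number of white vertices as the total number of cycles of $\sigma$, and likewise for $\tau$ and $\tau\sigma$; this is just the observation that every cycle of a permutation in $S_N$ has some length $m\ge 1$ and these lengths partition $\{1,\dots,N\}$. This also recovers the Euler-characteristic/Riemann–Hurwitz compatibility used elsewhere in the paper (e.g.\ the relation $2g+L=N$ for two-vertex dessins), since the number of vertices, edges and faces are $c(\sigma)+c(\tau)$, $N$, and $c(\tau\sigma)$ respectively, where $c(\cdot)$ denotes the number of cycles.
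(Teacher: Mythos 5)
Your proposal is correct and follows exactly the standard rotation-system dictionary (white vertices $\leftrightarrow$ cycles of $\sigma$, black vertices $\leftrightarrow$ cycles of $\tau$, face boundary walks $\leftrightarrow$ orbits of $\tau\sigma$ with the degree doubling), which is precisely what the paper relies on: the paper gives no proof of this proposition at all, citing \cite[Chapter 4]{G-G} and \cite[Chapters 1, 2]{L-Z} for the correspondence. Your handling of the $\tau\sigma$ versus $\sigma\tau$ convention via conjugacy and of the face-degree count (each edge counted once per side it presents to the face, matching \cite[Definition 1.3.8]{L-Z}) is consistent with the paper's conventions, so nothing is missing.
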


By an automorpism of a dessin $(X',\mathcal D')$, where $D'=\phi'^{-1}([0,1])$ for some Belyi function $\phi': X' \rightarrow \widehat{\mathbb{C}}$ , we mean an automorphism of the Riemann surface $X'$ whose restriction on $\mathcal D'$ induces an automorphism of the bicolored bipartite graph $\mathcal D'$. 
All automorphisms of $(X',\mathcal D')$ naturally form a group, which we call the automorphism group of $(X',\mathcal D')$. 
Obviously the group structure of this automorphism group is invariant whenever $(X',\mathcal D')$ is replaced by a dessin equivalent to $(X',\mathcal D')$. 
It is known that the automorphism group of any dessin with $N$ edges and two vertices is a cyclic group of order dividing $N$. 
Take any positive divisor $r$ of $N$. 
In the present paper, fixing a positive integer $L\leq N$ and a non-negative integer $h\leq L$, we shall give explicit expressions for the number of equivalence classes of dessins with $N$ edges, $L$ faces and two vertices whose automorphism groups are of order $r$, and for the number of equivalence classes of dessins with $N$ edges, $h$ faces of degree $2$ and two vertices whose automorphism groups are of order $r$. 

We prove several preliminary results in this section, with $n$ assumed to be a positive divisor of $N$: $n\in D_N$. 
Let $E$ denote the set of non-negative integers less than $N/n$, and $E^\times$ the set of integers in $E$ relatively prime to $N/n$. 
We note that $E^\times\subset\mathbb N$ or $E^\times=\{0\}$ according to whether $n<N$ or $n=N$. 
Given $m,u\in\mathbb N$ and $m$ integers $a_1$, \dots, $a_m$, we say that $a_1$, \dots, $a_m$ are distinct modulo $u$ when the residue classes of $a_1,\dots,a_m$ modulo $u$ are distinct. 

In $S_N$, write $\sigma_0$ for the $N$-cycle sending each positive integer $j<N$ to $j+1$: 
$$\sigma_0=(1\ 2\ \dots\ N).$$ 
\begin{Lemma}
Take an $N$-cycle $\tau$ in $C(\sigma_0^n)$ and a positive integer $a\leq N$. 
Then the $n$ integers $\tau^0(a)=a$, \dots, $\tau^{n-1}(a)$ are distinct modulo $n$, and $\tau^{n}(a)\equiv a+bn\pmod{N}$ with a unique $b\in E^\times$. 
\end{Lemma}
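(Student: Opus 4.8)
The plan is to reduce everything to the block structure of $\sigma_0^n$. First I would observe that the cycle decomposition of $\sigma_0^n$ consists of exactly the $n$ cycles $C_0,\dots,C_{n-1}$, where $C_i=\{\,j\in\{1,\dots,N\}\mid j\equiv i\pmod n\,\}$, each of length $N/n$, and that $\sigma_0^n$ acts on each $C_i$ as the $(N/n)$-cycle $j\mapsto j+n$ (residues taken modulo $N$). Since $\tau\in C(\sigma_0^n)$, the permutation $\tau$ sends cycles of $\sigma_0^n$ to cycles of $\sigma_0^n$, hence permutes the blocks $C_0,\dots,C_{n-1}$; let $\pi\in S_n$ be the induced permutation, so that $\tau(C_i)=C_{\pi(i)}$ and more generally $\tau^k(C_i)=C_{\pi^k(i)}$ for all $k\geq 0$.

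For the first assertion I would then use that $\tau$ is an $N$-cycle: its orbit on the chosen $a$ is all of $\{1,\dots,N\}$, so, writing $i_0$ for the index with $a\in C_{i_0}$ and noting that every block is nonempty, the $\pi$-orbit of $i_0$ must be all of $\{0,\dots,n-1\}$, i.e.\ $\pi$ is an $n$-cycle. Consequently $a=\tau^0(a),\tau^1(a),\dots,\tau^{n-1}(a)$ lie respectively in $C_{i_0},C_{\pi(i_0)},\dots,C_{\pi^{n-1}(i_0)}$, which are $n$ pairwise distinct blocks; since elements lying in different blocks are incongruent modulo $n$, the integers $\tau^0(a),\dots,\tau^{n-1}(a)$ are distinct modulo $n$.

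For the second assertion, since $\pi$ is an $n$-cycle we have $\pi^n=\mathrm{id}$, so $\tau^n$ fixes every block setwise; write $C=C_{i_0}$. The restriction $\tau^n|_C$ commutes with $\sigma_0^n|_C$, which is an $(N/n)$-cycle on $C$, and the centralizer of an $(N/n)$-cycle inside the symmetric group on $C$ is the cyclic group that it generates; hence $\tau^n|_C=(\sigma_0^n|_C)^b$ for a unique $b\in E=\{0,\dots,N/n-1\}$, which gives $\tau^n(a)=\sigma_0^{nb}(a)\equiv a+bn\pmod N$. To see that $b\in E^\times$, I would note that $\langle\tau\rangle$ acts regularly on $\{1,\dots,N\}$ while $\tau^n$ has order $N/n$, so the $\langle\tau^n\rangle$-orbit of $a$ has exactly $N/n$ elements; being contained in $C$, which also has $N/n$ elements, it equals $C$, that is, $\tau^n|_C$ is an $(N/n)$-cycle. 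But a power $(\sigma_0^n|_C)^b$ of an $(N/n)$-cycle is again an $(N/n)$-cycle precisely when $\gcd(b,N/n)=1$, so $b\in E^\times$. The uniqueness of $b$ is then immediate, since the value $bn\bmod N$ determines $b$ modulo $N/n$ while $b$ ranges over $\{0,\dots,N/n-1\}$.

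I do not anticipate a genuine obstacle: the argument is just the standard wreath-product description of the centralizer $C(\sigma_0^n)$, specialized to an element that happens to be an $N$-cycle. The only points that deserve a line of care are the passage from ``$\tau$ commutes with $\sigma_0^n$'' to the block and within-block descriptions, and the degenerate case $n=N$, in which $N/n=1$, $C$ is a single point, $E=E^\times=\{0\}$, and both assertions hold trivially because $\tau^N=\mathrm{id}$.
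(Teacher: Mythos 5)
Your argument is correct, including the degenerate case $n=N$ and the coprimality of $b$ with $N/n$, but it takes a genuinely different route from the paper's. The paper proves the lemma by a short direct computation: from $\tau^k(a)=\tau^{k'}(a)+b'n$ it passes, via $\tau^k(a)=\tau^{k'}\sigma_0^{b'n}(a)$, to $\tau^{(k-k')N/n}(a)=a$ and hence $k\equiv k'\pmod n$, which yields the distinctness modulo $n$ and a unique $b\in E$ with $\tau^n(a)\equiv a+bn\pmod N$; coprimality then follows by computing $\tau^{N/\gcd(b,N/n)}(a)=a$ and using that $\tau$ is an $N$-cycle. You instead make the block structure explicit: the cycles of $\sigma_0^n$ are the residue classes $C_0,\dots,C_{n-1}$ modulo $n$, the commuting $\tau$ induces a permutation $\pi$ of these blocks which must be an $n$-cycle because $\tau$ is transitive (giving the first assertion), and then $\tau^n$ stabilizes each block, its restriction to the block $C$ of $a$ centralizes the $(N/n)$-cycle $\sigma_0^n|_C$ and hence equals $(\sigma_0^n|_C)^b$, while regularity of the $\langle\tau\rangle$-action forces this restriction to be a full $(N/n)$-cycle, so $\gcd(b,N/n)=1$. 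Your route leans on two standard facts (the centralizer of a full cycle in the symmetric group on its support is the cyclic group it generates, and a power of an $m$-cycle is an $m$-cycle exactly when the exponent is prime to $m$) which the paper does not need, since its manipulation of powers of $\tau$ and $\sigma_0$ establishes the required instances directly; in exchange, your description exposes the wreath-product structure of $C(\sigma_0^n)$, which is precisely what the paper goes on to parametrize in Lemma~2 and through the cycles $\rho_{\omega,\boldsymbol u,b}$, so it is more conceptual and foreshadows the later constructions, at the cost of being slightly less self-contained than the paper's two-line computation.
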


\begin{proof}
Note that an integer $j$ satisfying $\tau^j(a)=a$ is divisible by $N$. 
If $\tau^k(a)=\tau^{k'}(a)+b'n$ with integers $k$, $k'$ and $b'$, then since $\tau^k(a)=\sigma_0^{b'n}\tau^{k'}(a)=\tau^{k'}\sigma_0^{b'n}(a)$, we obtain $\tau^{(k-k')N/n}(a)=\sigma_0^{b'nN/n}(a)=a$, so that $k\equiv k'\pmod{n}$. 
It therefore follows that $\tau^0(a)$, \dots, $\tau^{n-1}(a)$ are distinct modulo $n$ and so are $\tau(a)$, \dots, $\tau^n(a)$. 
Hence we have $\tau^n(a)\equiv\tau^0(a)\pmod{n}$, i.e., $\tau^n(a)\equiv a+bn\pmod{N}$ with a unique $b\in E$. 
As the latter congruence means $\tau^n(a)=\sigma_0^{nb}(a)$, we see that 
$$\tau^{N/\gcd(b,N/n)}(a)=\tau^{n(N/n)/\gcd(b,N/n)}(a)=\sigma_0^{Nb/\gcd(b,N/n)}(a)=a.$$ 
Therefore $b$ is relatively prime to $N/n$. 
In passing, $b$ is independent of $a$. 
\end{proof}

\begin{Lemma}
Let $m\in\mathbb N$, $m\leq n$, $d\in D_{N/n}$ and $N_0=mN/(dn)$. 
Take $m$ positive integers $a_1$, \dots, $a_m$ which are not more than $N$ but distinct modulo $n$, and take an integer $b_0\in E$ satisfying $d=\gcd(b_0,N/n)$. 
Then 
\renewcommand{\labelenumi}{{\rm (\roman{enumi})}}
\begin{enumerate}
\item there exists a unique $N_0$-cycle $\rho$ in $C(\sigma_0^{dn})$ such that, for all positive integers $j<m$, $\rho(a_j)=a_{j+1}$ or equivalently $\rho^j(a_1)=a_{j+1}$ and that $\rho(a_m)\ (=\rho^m(a_1))\equiv a_1+b_0n\pmod{N}\,;$ 
\item the $N_0$-cycle $\rho$ appears in the cycle decomposition of any permutation $\tau$ in $C(\sigma_0^{dn})$ such that $\tau(a_1)=\rho(a_1)$, \dots, $\tau(a_m)=\rho(a_m)$.
\end{enumerate}
\end{Lemma}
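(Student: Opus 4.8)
The plan is to construct the cycle $\rho$ by hand on a union of orbits of $\sigma_0^{dn}$ and to deduce both uniqueness and part (ii) from the elementary observation that a permutation in $C(\sigma_0^{dn})$ is completely determined by its values on one representative of each residue class modulo $dn$. Two arithmetic remarks are needed at the outset. First, since $n\mid dn$, the hypothesis that $a_1,\dots,a_m$ are distinct modulo $n$ implies that they are distinct modulo $dn$; hence the $\sigma_0^{dn}$-orbits $O_j=\{x:\ x\equiv a_j\ (\mathrm{mod}\ dn)\}$, each of cardinality $N/(dn)$, are pairwise disjoint. Second, because $d=\gcd(b_0,N/n)$ divides $b_0$, we may write $b_0=dc$ with $\gcd(c,N/(dn))=1$, so that $b_0n=c\cdot dn$ is a multiple of $dn$ and in particular $a_1+b_0n\equiv a_1\ (\mathrm{mod}\ dn)$, i.e.\ $a_1+b_0n\in O_1$.

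I would then define $\rho$ to fix every integer outside $O_1\cup\dots\cup O_m$ and, on these orbits, to act by $\rho(a_j+t\,dn)=a_{j+1}+t\,dn$ for $1\le j<m$ and $\rho(a_m+t\,dn)=a_1+b_0n+t\,dn$, all indices read modulo $N$ and $t$ running over a complete residue system modulo $N/(dn)$. By the disjointness above this is well defined, and by construction $\rho(x+dn)=\rho(x)+dn$ for every $x$, so $\rho\in C(\sigma_0^{dn})$. The map $\rho$ permutes the blocks cyclically, $O_1\to O_2\to\dots\to O_m\to O_1$, and the first-return map $\rho^m$ on $O_1$ is translation by $b_0n=c\cdot dn$; since $\gcd(c,N/(dn))=1$ this return map is a single $N/(dn)$-cycle on $O_1$. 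Consequently $\rho$ restricted to $O_1\cup\dots\cup O_m$ is a single cycle, of length $m\cdot N/(dn)=N_0$, and it plainly satisfies $\rho^j(a_1)=a_{j+1}$ for $j<m$ and $\rho^m(a_1)\equiv a_1+b_0n\ (\mathrm{mod}\ N)$. This proves the existence half of (i).

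For the rest I would use repeatedly that any $\tau\in C(\sigma_0^{dn})$ satisfies $\tau(x+dn)=\tau(x)+dn$, so that $\tau$ restricted to $O_j$ is determined by $\tau(a_j)$. If $\tau(a_j)=\rho(a_j)$ for $1\le j\le m$ then $\tau=\rho$ on $O_1\cup\dots\cup O_m$; since $\rho$ maps this set onto itself, so does $\tau$, and there $\tau$ acts as the $N_0$-cycle $\rho$, which therefore appears in the cycle decomposition of $\tau$: this is (ii). For uniqueness in (i), observe that if $\rho'$ is any $N_0$-cycle in $C(\sigma_0^{dn})$ then, whenever $\rho'$ fixes a point $x$ it also fixes $x+dn$, so the support of $\rho'$ is a union of $\sigma_0^{dn}$-orbits; if $\rho'$ additionally meets the conditions of (i) then $\rho'=\rho$ on $O_1\cup\dots\cup O_m$, so the support of $\rho'$ contains these $m$ orbits, a set already of size $N_0$, hence equals them. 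Thus $\rho'$ agrees with $\rho$ on its support and fixes everything else, i.e.\ $\rho'=\rho$.

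I expect the only delicate point to be checking that the $\rho$ just built is a single cycle of length $N_0$ rather than a product of several shorter cycles; this is exactly where the coprimality $\gcd(b_0/d,N/(dn))=1$ forced by $d=\gcd(b_0,N/n)$ enters, in tandem with the structural fact that an $N_0$-cycle in $C(\sigma_0^{dn})$ is necessarily supported on a union of $\sigma_0^{dn}$-orbits. The remaining verifications are routine bookkeeping with residues modulo $dn$, and I would also note in passing that the degenerate case $N_0=1$ (which forces $m=1$, $d=N/n$, $b_0=0$, and $\rho=\mathrm{id}$) is consistent with the convention that a $1$-cycle is the identity.
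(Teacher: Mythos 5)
Your proof is correct, and it reaches the same permutation $\rho$ as the paper does, but the verifications are organized along a genuinely different route. The paper writes down the full orbit at once, declaring $\rho^{j+mu}(a_1)\equiv a_{j+1}+b_0nu\pmod N$ for $0\le j<m$, $0\le u<N/(dn)$, asserting that these $N_0$ integers are distinct modulo $N$ (so that $\rho$ is an $N_0$-cycle by fiat), and then proves $\rho\in C(\sigma_0^{dn})$ by the conjugation computation $\sigma_0^{b_0n}\rho\sigma_0^{-b_0n}=\rho$ together with $\gcd(b_0n,N)=dn$; part (ii) follows from the same commutation trick applied to $\tau$, and uniqueness is obtained as the special case of (ii) in which $\tau$ is itself an $N_0$-cycle. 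You instead decompose the support into the $\sigma_0^{dn}$-orbits $O_1,\dots,O_m$, define $\rho$ blockwise so that the relation $\rho(x+dn)=\rho(x)+dn$ — and hence membership in $C(\sigma_0^{dn})$ — is automatic, and then prove the single-cycle property via the first-return map on $O_1$, which is translation by $b_0n=c\,dn$ with $\gcd(c,N/(dn))=1$. This last step is exactly the coprimality argument that the paper leaves implicit inside its unproved distinctness claim, so your write-up is in that respect more complete; your uniqueness argument via the support of an $N_0$-cycle in $C(\sigma_0^{dn})$ being a union of $\sigma_0^{dn}$-orbits is slightly more elaborate than necessary (as in the paper, uniqueness already follows from (ii) since two $N_0$-cycles sharing an $N_0$-cycle in their decompositions coincide), but it is valid, and you correctly flag the degenerate case $N_0=1$. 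Both arguments rest on the same determination principle — an element of $C(\sigma_0^{dn})$ is fixed on an orbit by its value at one representative — so the trade-off is essentially the paper's shorter explicit-orbit bookkeeping versus your more transparent block structure with the arithmetic made explicit.
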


\begin{proof}  
Let $j$ range over the non-negative integers less than $m$, and $u$ over the non-negative integers less than $N/(dn)$. 
Since the $N_0$ integers $a_{j+1}+b_0nu$ for all pairs $(j,u)$ are distinct modulo $N$, there exists a unique $N_0$-cycle $\rho$ in $S_N$ such that 
$$\rho^{j+mu}(a_1)\equiv a_{j+1}+b_0nu\pmod{N},\quad\text{i.e.,}\quad\rho^{j+mu}(a_1)=\sigma_0^{b_0nu}(a_{j+1}).$$ 
Here $u$ can be replaced by any integer, because $\rho^{j+mu'}=\rho^{j+mu}$ and $\sigma_0^{b_0nu'}=\sigma_0^{b_0nu}$ for every integer $u'$ congruent to $u$ modulo $N/(dn)$. 
Hence, in the case $j\not=m-1$, 
\begin{align*}
 \sigma_0^{b_0n}\rho\left(\rho^{j+mu}(a_1)\right) & =\sigma_0^{b_0n}\sigma_0^{b_0nu}(a_{j+2})\equiv a_{j+2}+b_0n(u+1)\pmod{N},\\
 \rho\sigma_0^{b_0n}\left(\rho^{j+mu}(a_1)\right) & =\rho\sigma_0^{b_0n}\sigma_0^{b_0nu}(a_{j+1})=\rho\rho^{j+m(u+1)}(a_1)\\
 & \equiv a_{j+2}+b_0n(u+1)\pmod{N};
\end{align*}
at the same time, 
\begin{align*}
 \sigma_0^{b_0n}\rho\left(\rho^{m-1+mu}(a_1)\right) & =\sigma_0^{b_0n}\sigma_0^{b_0n(u+1)}(a_1)\equiv a_1+b_0n(u+2)\pmod{N},\\
 \rho\sigma_0^{b_0n}\left(\rho^{m-1+mu}(a_1)\right) & =\rho\sigma_0^{b_0n}\sigma_0^{b_0nu}(a_m)=\rho\rho^{m-1+m(u+1)}(a_1)\\ & \equiv a_1+b_0n(u+2)\pmod{N}.
\end{align*}
It therefore follows that $\sigma_0^{b_0n}\rho$ coincides with $\rho\sigma_0^{b_0n}$ on $\{a_1,\rho(a_1),\dots,\rho^{N_0-1}(a_1)\}$. 
However $\rho=(a_1\ \rho(a_1)\ \dots\ \rho^{N_0-1}(a_1))$ and $\sigma_0^{b_0n}(a_1)=\rho^m(a_1)$. 
Hence 
\begin{align*}
& \sigma_0^{b_0n}\rho\sigma_0^{-b_0n}=\left(\sigma_0^{b_0n}(a_1)\ \sigma_0^{b_0n}\rho(a_1)\ \dots\ \sigma_0^{b_0n}\rho^{N_0-1}(a_1)\right)\\ & =(\sigma_0^{b_0n}\left(a_1)\ \rho\sigma_0^{b_0n}(a_1)\ \dots\ \rho^{N_0-1}\sigma_0^{b_0n}(a_1)\right)=\left(\rho^m(a_1)\ \rho^{m+1}(a_1)\ \dots\ \rho^{m+N_0-1}(a_1)\right)\\
& =\rho.
\end{align*}  
This implies that $\rho$ belongs to $C(\sigma_0^{dn})$, since $dn=\gcd(b_0n,N)$. 

Now let $\tau$ be a permutation in $C(\sigma_0^{dn})$ such that $\tau(a_{j+1})=\rho(a_{j+1})$ (for $j\in\{0,1,\dots,m-1\}$). 
Then we have 
\begin{align*}
\tau\left(\rho^{j+mu}(a_1)\right) & =\tau\sigma_0^{b_0nu}(a_{j+1})=\sigma_0^{b_0nu}\tau(a_{j+1})=\sigma_0^{b_0nu}\rho(a_{j+1})\\
& =\rho\sigma_0^{b_0nu}(a_{j+1})=\rho\left(\rho^{j+mu}(a_1)\right),
\end{align*}
which shows that $\tau$ agrees with $\rho$ on $\{a_1,\rho(a_1),\dots,\rho^{N_0-1}(a_1)\}$, that is, the $N_0$-cycle $\rho$ appears in the cycle decomposition of $\tau$. 
In particular, $\tau=\rho$ if $\tau$ is an $N_0$-cycle. 
Thus both the assertions (i) and (ii) are proved. 
\end{proof}

We take any $n$-cycle $\omega=(a_1\ a_2\ \dots\ a_n)$ in $S_n$ with $a_1=1$. 
Let 
$$\sigma_{0,n}=(1\ 2\ \dots\ n)$$ 
in $S_n$. 
Then there exist a positive integer $\nu$, $\nu$ positive integers $s_1$, \dots, $s_\nu$ and $\nu$ injections 
$$x_1:\{1,2,\dots,s_1\}\rightarrow\{1,2,\dots,n\},\ \ \dots,\ \ x_\nu:\{1,2,\dots,s_\nu\}\rightarrow\{1,2,\dots,n\}$$ 
such that the cycle decomposition of $\omega\sigma_{0,n}$ in $S_n$ is given by 
\begin{equation}
(1\ a_2\ \dots\ a_n)\sigma_{0,n}=\left(a_{x_1(1)}\ a_{x_1(2)}\ \dots\ a_{x_1(s_1)}\right)\cdots\left(a_{x_\nu(1)}\ a_{x_\nu(2)}\ \dots\ a_{x_\nu(s_\nu)}\right)
\end{equation}
and that $x_j(1)=\min\{x_j(1),x_j(2),\dots,x_j(s_j)\}$ for each positive integer $j\leq\nu$. 
It follows that not only $\nu$ but the set $\{x_1,\dots,x_\nu\}$ is unique to $\omega$. 
We may let 
$$x_1(1)=1,\quad\text{i.e.},\quad a_{x_1(1)}=1.$$ 
Obviously 
\begin{equation}
\sum_{j=1}^\nu s_j=n,\quad\bigcup_{j=1}^\nu\left\{x_j(1),x_j(2),\dots,x_j(s_j)\right\}=\{1,2,\dots,n\}.
\end{equation}
Let $u_1=0$, take any $(n-1)$-tuple $\boldsymbol u=(u_2,\dots,u_n)$ in the direct product $E^{n-1}$ of $n-1$ copies of $E$, and take any $b\in E^\times$. 
Lemma~2 then enables us to define an $N$-cycle $\rho=\rho_{\omega,\boldsymbol u,b}$ in $C(\sigma_0^n)$ by
\begin{align*} 
\rho(a_1+u_1n) & =a_2+u_2n,\ \ \dots,\ \ \rho(a_{n-1}+u_{n-1}n)=a_n+u_nn,\\
\rho(a_n+u_nn) & =a_1+u_1n+bn=1+bn.
\end{align*}  
Note that $\rho=\sigma_0^b$ in the case $n=1$. 
For any positive integer $j\leq\nu$, put 
\begin{equation*}
d_j=\gcd\left(\sum_{k=1}^{s_j}\left(u_{x_j(k+1)}-u_{x_j(k+1)-1}+\delta_j^{(k)}\right)+b_j,\ \frac{N}{n}\right),\ \ N_j=\frac{s_jN}{d_jn}.
\end{equation*} 
Here it is understood that 
$$x_j(s_j+1)=x_j(1),\quad u_0=u_n;$$
$b_j$ denotes $b$ or $0$ according as $j=1$ or $j>1$ and, for each $k\in\{1,2,\dots,s_j\}$, $\delta_j^{(k)}$ denotes $1$ or $0$ according as $a_{x_j(k)}=n$ or $a_{x_j(k)}<n$. 
Further, with Lemma~2 in mind, let $\pi_j$ denote the $N_j$-cycle in $C(\sigma_0^{d_jn})$ such that, for each positive integer $k'<s_j$,
$$\pi_j^{k'}\left(a_{x_j(1)}\right)\equiv a_{x_j(k'+1)}+\sum_{k=1}^{k'}\left(u_{x_j(k+1)}-u_{x_j(k+1)-1}+\delta_j^{(k)}\right)n\pmod{N}$$
and that 
$$\pi_j^{s_j}\left(a_{x_j(1)}\right)\equiv a_{x_j(1)}+\sum_{k=1}^{s_j}\left(u_{x_j(k+1)}-u_{x_j(k+1)-1}+\delta_j^{(k)}\right)n+b_jn\pmod{N}.$$
In the above situation, the following holds. 

\begin{Lemma}
All disjoint cycles in the cycle decomposition of $\rho\sigma_0=\rho_{\omega,\boldsymbol u,b}\sigma_0$ are 
$$\pi_1,\, \sigma_0^{n}\pi_1\sigma_0^{-n},\, \dots,\, \sigma_0^{(d_1-1)n}\pi_1\sigma_0^{-(d_1-1)n},\ \dots,\ \pi_\nu,\ \sigma_0^{n}\pi_\nu\sigma_0^{-n},\, \dots,\, \sigma_0^{(d_\nu-1)n}\pi_\nu\sigma_0^{-(d_\nu-1)n}.$$
\end{Lemma}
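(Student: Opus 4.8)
The plan is to track where the cycle $\rho\sigma_0$ sends each element of $\{1,\dots,N\}$ and show that the orbit structure is exactly the disjoint union claimed. Every element of $\{1,\dots,N\}$ can be written uniquely as $a_i + un$ with $i\in\{1,\dots,n\}$ and $u\in E$, so I would first compute $\rho\sigma_0(a_i+un)$ explicitly. Since $\sigma_0$ sends $a_i+un$ to $a_i+un+1$, and by the definition of the indices $x_j$ the permutation $\sigma_{0,n}=(1\ 2\ \dots\ n)$ sends $a_{x_j(k)}$ to $a_{x_j(k+1)}$ (this is exactly the meaning of the cycle decomposition $\omega\sigma_{0,n}=\prod_j(a_{x_j(1)}\ \dots\ a_{x_j(s_j)})$, read with $\omega=(1\ a_2\ \dots\ a_n)$ acting first), applying $\rho$ afterwards and keeping careful track of the correction terms $\delta_j^{(k)}$ (which record the ``carry'' when $a_{x_j(k)}=n$, so that $a_{x_j(k)}+1$ wraps to $1=a_1$ and picks up the extra $b$-shift hidden in the definition of $\rho$ on $a_n+u_nn$) will give a clean formula of the form $\rho\sigma_0(a_{x_j(k)}+un)\equiv a_{x_j(k+1)}+\bigl(u + (\text{increment}_{j,k})\bigr)n \pmod N$, where the increment is $u_{x_j(k+1)}-u_{x_j(k+1)-1}+\delta_j^{(k)}$ (plus $b$ when $(j,k)=(1,\text{the wrap-around position})$).

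Next I would compare this formula with the defining relations of $\pi_j$. By telescoping the increments, the iterate $(\rho\sigma_0)^{k'}$ applied to $a_{x_j(1)}+un$ reproduces precisely the right-hand side in the definition of $\pi_j^{k'}(a_{x_j(1)})$, shifted by $un$; hence $\pi_j$ (acting on the indices $a_{x_j(1)},\dots,a_{x_j(s_j)}$ and their $n$-translates) is exactly the cycle of $\rho\sigma_0$ through $a_{x_j(1)}$. Because $\pi_j\in C(\sigma_0^{d_jn})$ and has length $N_j=s_jN/(d_jn)$, its orbit meets the $n$-coset structure in the pattern that makes $\sigma_0^{tn}\pi_j\sigma_0^{-tn}$ for $t=0,1,\dots,d_j-1$ the distinct conjugates cycling through the translates $a_{x_j(k)}+un$ for all $u\in E$ — this uses that $\rho\sigma_0$ commutes with $\sigma_0^n$ (since $\rho\in C(\sigma_0^n)$ and $\sigma_0$ trivially commutes with $\sigma_0^n$), so conjugating an orbit by $\sigma_0^n$ gives another orbit, and the number of distinct ones obtained from $\pi_j$ is $[\langle\sigma_0^n\rangle : \langle\sigma_0^n\rangle\cap\langle\pi_j\rangle]=d_j$. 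Finally, a counting check: the total length is $\sum_j d_j N_j = \sum_j s_j N/n = N$ by \eqref{...}$\sum_j s_j=n$, and by \eqref{...} the sets $\{x_j(k)\}$ partition $\{1,\dots,n\}$, so every element $a_i+un$ lies in exactly one of the listed cycles; hence the list is the full disjoint cycle decomposition of $\rho\sigma_0$.

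The main obstacle I expect is purely bookkeeping: getting the increment formula right, in particular handling the ``carry'' terms $\delta_j^{(k)}$ and the single extra $b$-shift correctly when the argument wraps through $n\mapsto 1$, and confirming that these increments are precisely the summands appearing in the definitions of $d_j$ and of $\pi_j$. Once that formula is established, matching it against the definition of $\pi_j$ and reading off the conjugates is essentially immediate, and the counting argument closes the proof. I would organize the write-up as: (1) reduce to computing $\rho\sigma_0$ on a general element $a_i+un$; (2) establish the increment formula, splitting into the cases $a_{x_j(k)}<n$ and $a_{x_j(k)}=n$ and into whether the wrap-around picks up $b$; (3) telescope to identify the orbit through $a_{x_j(1)}$ with $\pi_j$; (4) use commutation with $\sigma_0^n$ to produce the $d_j$ conjugates; (5) conclude by the length/partition count.
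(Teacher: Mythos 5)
Your overall route is essentially the paper's: extract from the cycle decomposition (3) the identity $a_{x_j(k)}+1=a_{x_j(k+1)-1}+\delta_j^{(k)}n$, use $\rho\in C(\sigma_0^n)$ to compute $\rho\sigma_0$ on the translates $a_{x_j(k)}+un$, telescope the increments $u_{x_j(k+1)}-u_{x_j(k+1)-1}+\delta_j^{(k)}$ (with the extra $b$ at the wrap-around) so as to match the defining congruences of $\pi_j$, conclude via Lemma~2(ii) that $\pi_j$ is the cycle of $\rho\sigma_0$ through $a_{x_j(1)}$, produce the conjugates from $\sigma_0^n(\rho\sigma_0)\sigma_0^{-n}=\rho\sigma_0$, and finish with the count $\sum_j d_jN_j=N$ plus the fact that for distinct $j$ the supports lie in disjoint residue classes modulo $n$. (One wording slip: it is $\omega\sigma_{0,n}$, not $\sigma_{0,n}$, that sends $a_{x_j(k)}$ to $a_{x_j(k+1)}$; $\sigma_{0,n}$ sends it to $a_{x_j(k)}+1-\delta_j^{(k)}n$, and that is precisely where the displayed identity and the carry terms come from. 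Your increment formula is nevertheless the correct one.)

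The step whose justification does not work as written is the count of distinct conjugates of $\pi_j$. You assert that the number of distinct cycles $\sigma_0^{tn}\pi_j\sigma_0^{-tn}$ equals $[\langle\sigma_0^n\rangle:\langle\sigma_0^n\rangle\cap\langle\pi_j\rangle]=d_j$. The stabilizer for this conjugation action is $\langle\sigma_0^n\rangle\cap C(\pi_j)$, not $\langle\sigma_0^n\rangle\cap\langle\pi_j\rangle$: whenever $N_j<N$, every nontrivial power of $\sigma_0^n$ moves all $N$ points while every nontrivial power of $\pi_j$ fixes the points outside its support, so $\langle\sigma_0^n\rangle\cap\langle\pi_j\rangle=\{1\}$ and your index is $N/n$, not $d_j$. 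Even after replacing $\langle\pi_j\rangle$ by $C(\pi_j)$, the inclusion $\langle\sigma_0^{d_jn}\rangle\subseteq\langle\sigma_0^n\rangle\cap C(\pi_j)$ only bounds the number of conjugates above by $d_j$; to get exactly $d_j$ — which the final length count $\sum_j d_jN_j=N$ genuinely requires — you must exclude $\sigma_0^{mn}\pi_j\sigma_0^{-mn}=\pi_j$ for $0<m<d_j$. The paper does this by observing that the elements of the $\pi_j$-orbit lying in the residue class of $a_{x_j(1)}$ modulo $n$ are exactly those congruent to $a_{x_j(1)}$ modulo $d_jn$ (since $\pi_j^k(a_{x_j(1)})\equiv a_{x_j(1)}\pmod{d_jn}$ when $s_j\mid k$ and $\pi_j^k(a_{x_j(1)})\not\equiv a_{x_j(1)}\pmod{n}$ otherwise), whereas $\sigma_0^{mn}(a_{x_j(1)})\equiv a_{x_j(1)}+mn\not\equiv a_{x_j(1)}\pmod{d_jn}$ for $0<m<d_j$; hence the $d_j$ conjugates are pairwise distinct. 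Your phrase about the orbit meeting the $n$-coset structure ``in the pattern'' gestures at this but supplies no argument, and the index formula offered in its place is false; inserting the mod-$d_jn$ computation above repairs the proof, after which your concluding count coincides with the paper's.
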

\noindent
{\it Proof.}
Let $j\in\{1,2,\dots,\nu\}$. 
If $k\in\{1,2,\dots,s_j\}$, then $\sigma_{0,n}=(1\ 2\ \dots\ n)$ sends $a_{x_j(k)}$ to $a_{x_j(k)}+1-\delta_j^{(k)}n$, $(a_{x_j(1)}\ a_{x_j(2)}\ \dots\ a_{x_j(s_j)})$ sends $a_{x_j(k)}$ to $a_{x_j(k+1)}$, and so, by (3), $(a_1\ a_2\ \dots\ a_n)$ must send $a_{x_j(k)}+1-\delta_j^{(k)}n$ to $a_{x_j(k+1)}$; hence 
$$a_{x_j(k)}+1=a_{x_j(k+1)-1}+\delta_j^{(k)}n,$$
where we put $a_0=a_n$ so that 
$$a_{x_1(s_1)}+1=a_n+\delta_1^{(s_1)}n.$$ 
For convenience sake, we extend the domain of each $\sigma\in S_N$ to $\mathbb Z$ by the rule that 
$\sigma(m)=\sigma(m')$ for every $(m,m')\in\mathbb Z\times\{1,2,\dots,N\}$ with $m\equiv m'\pmod{N}$. 
Then any $a,a'\in\mathbb Z$ and any $u\in D_{N/n}$ satisfy $a+a'un\equiv\sigma_0^{a'un}(a)\pmod{N}$, which yields 
$$\tau(a+a'un)\equiv\tau(a)+a'un\pmod{N}$$ 
for any $\tau\in C(\sigma_0^{un})$, because $\tau(a+a'un)=\tau\sigma_0^{a'un}(a)=\sigma_0^{a'un}\tau(a)$. 
Obviously $\rho\sigma_0$ belongs to $C(\sigma_0^n)$ and, unless $j=1$, $\{x_j(1),x_j(2),\dots,x_j(s_j)\}$ does not contain $1$. 
Therefore, for each $k'\in\{1,2,\dots,s_j\}$, 
\begin{align*}
& \rho\sigma_0\left(a_{x_j(k')}+\sum_{k=1}^{k'-1}\left(u_{x_j(k+1)}-u_{x_j(k+1)-1}+\delta_j^{(k)}\right)n\right)\\
\equiv &\ \rho\left(a_{x_j(k')}+1\right)+\sum_{k=1}^{k'-1}\left(u_{x_j(k+1)}-u_{x_j(k+1)-1}+\delta_j^{(k)}\right)n\\
\equiv &\ \rho\left(a_{x_j(k'+1)-1}\right)+\delta_j^{(k')}n+\sum_{k=1}^{k'-1}\left(u_{x_j(k+1)}-u_{x_j(k+1)-1}+\delta_j^{(k)}\right)n\pmod{N},
\end{align*}
whence, by the definition of $\rho$, 
\begin{align*}
& \rho\sigma_0\left(a_{x_j(k')}+\sum_{k=1}^{k'-1}\left(u_{x_j(k+1)}-u_{x_j(k+1)-1}+\delta_j^{(k)}\right)n\right)\\
\equiv & 
\begin{cases}
\displaystyle{a_{x_j(k'+1)}+\sum_{k=1}^{k'}\left(u_{x_j(k+1)}-u_{x_j(k+1)-1}+\delta_j^{(k)}\right)n\pmod{N}} & \quad\text{if}\ k'<s_j,\\
\displaystyle{a_{x_j(1)}+\sum_{k=1}^{s_j}\left(u_{x_j(k+1)}-u_{x_j(k+1)-1}+\delta_j^{(k)}\right)n+b_jn\pmod{N}} & \quad\text{if}\ k'=s_j.
\end{cases}
\end{align*}
Thus 
\begin{align*}
(\rho\sigma_0)^{k'}(a_{x_j(1)}) & \equiv a_{x_j(k'+1)}+\sum_{k=1}^{k'}\left(u_{x_j(k+1)}-u_{x_j(k+1)-1}+\delta_j^{(k)}\right)n\pmod{N}\quad\text{for}\ k'<s_j,\\  
(\rho\sigma_0)^{s_j}(a_{x_j(1)}) & \equiv a_{x_j(1)}+\sum_{k=1}^{s_j}\left(u_{x_j(k+1)}-u_{x_j(k+1)-1}+\delta_j^{(k)}\right)n+b_jn\pmod{N}.
\end{align*}
The definition of $\pi_j$ therefore implies that $\pi_j$ appears in the cycle decomposition of $\rho\sigma_0$.
Since $\sigma_0^n(\rho\sigma_0)\sigma_0^{-n}=\rho\sigma_0$, it follows that the $N_j$-cycles $\sigma_0^{mn}\pi_j\sigma_0^{-mn}$, $m\in\{1,2,\dots,d_j-1\}$, 
also appear in the cycle decomposition of $\rho\sigma_0$. 
Furthermore, for each positive integer $k\leq N_j$, the definitions of $\pi_j$ and $d_j$ together with Lemma~2 yield 
$$\pi_j^k\left(a_{x_j(1)}\right)-a_{x_j(1)}\equiv0\pmod{d_jn}\quad\text{or}\quad\pi_j^k\left(a_{x_j(1)}\right)-a_{x_j(1)}\not\equiv0\pmod{n}$$ 
according to whether $s_j\mid k$ or $s_j\nmid k$; while, for each non-negaitive integer $m<d_j$,  
\begin{align*}
& \sigma_0^{mn}\pi_j\sigma_0^{-mn}=\left(\sigma_0^{mn}\left(a_{x_j(1)}\right)\ \sigma_0^{mn}\pi_j\left(a_{x_j(1)}\right)\ \dots\ \sigma_0^{mn}\pi_j^{N_j-1}\left(a_{x_j(1)}\right)\right),\\
& \sigma_0^{mn}\left(a_{x_j(1)}\right)-a_{x_j(1)}\equiv mn\pmod{d_jn}.
\end{align*} 
Hence the $N_j$-cycles $\sigma_0^{mn}\pi_j\sigma_0^{-mn}$, $m\in\{0,1,\dots,d_j-1\}$, 
are distinct.   
Given a positive integer $j'\leq\nu$ other than $j$, any of the above $N_j$-cycles differs as well from any $\sigma_0^{mn}\pi_{j'}\sigma_0^{-mn}$, $m\in\{0,1,\dots,d_{j'}-1\}$. 
Indeed, for each quartet $(k,m,k',m')$ of non-negative integers with $k<N_j$, $m<d_j$, $k'<N_{j'}$ and $m'<d_{j'}$, 
$$\sigma_0^{mn}\pi_{j}^k\left(a_{x_{j}(1)}\right)\equiv\pi_{j}^{k}\left(a_{x_{j}(1)}\right)\not\equiv\pi_{j'}^{k'}\left(a_{x_{j'}(1)}\right)\equiv\sigma_0^{m'n}\pi_{j'}^{k'}\left(a_{x_{j'}(1)}\right)\pmod{n}.$$ 
We thus find that $\pi_j,\ \sigma_0^{n}\pi_j\sigma_0^{-n},\ \dots,\ \sigma_0^{(d_j-1)n}\pi_j\sigma^{-(d_j-1)n}$ for all $j\in\{1,2,\dots,\nu\}$ are disjoint cycles in the cycle decomposition of $\rho\sigma_0$.  
The proof of the lemma is now completed since 
$$\sum_{j=1}^\nu d_jN_j=\frac{N}{n}\sum_{j=1}^\nu s_j=N.$$ 
\qed\\

We end this section with statement of two key propositions to prove main theorems.

Let $T$ be the the set of $N$-cycles $\tau$ in $S_N$ such that the cycle decomposition of $\tau\sigma_0$ contains exactly $L$ cycles, and for each $\sigma\in S_N$, let $C(\sigma)$ denote the centralizer of $\sigma$ in $S_N$. 
We write $|F|$ for the cardinality of each set $F$. 
 
\begin{Proposition}
Let $n\in D_N$ as above. 
Then 
\begin{equation*}
|T\cap C(\sigma_0^n)|=\varPsi_{N,L,n}.
\end{equation*}
\end{Proposition}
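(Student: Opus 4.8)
The plan is to count $N$-cycles $\tau \in C(\sigma_0^n)$ for which $\tau\sigma_0$ has exactly $L$ disjoint cycles, by using the parametrization of such $\tau$ already set up before Lemma~3. Indeed, any $N$-cycle $\tau$ in $C(\sigma_0^n)$ is of the form $\rho_{\omega,\boldsymbol u,b}$ for a suitable $n$-cycle $\omega = (1\ a_2\ \dots\ a_n)$ in $S_n$, an $(n-1)$-tuple $\boldsymbol u \in E^{n-1}$ (with $u_1 = 0$), and some $b \in E^\times$; this follows from Lemma~1, which tells us that the "$n$-step behaviour" of $\tau$ is governed by a single $b \in E^\times$ and that $\tau^0(a),\dots,\tau^{n-1}(a)$ are distinct mod $n$, so $\tau$ permutes the $n$ residue classes mod $n$ as an $n$-cycle $\omega$ and the offsets $u_2,\dots,u_n$ record the representatives chosen. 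First I would make this parametrization precise: describe the map $(\omega,\boldsymbol u,b) \mapsto \rho_{\omega,\boldsymbol u,b}$, check it is a bijection onto the set of $N$-cycles in $C(\sigma_0^n)$, and note the total count is $(n-1)!\cdot (N/n)^{n-1}\cdot\varphi(N/n)$.

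Next I would invoke Lemma~3, which computes the cycle decomposition of $\rho_{\omega,\boldsymbol u,b}\sigma_0$ explicitly: it consists, for each $j = 1,\dots,\nu$ (where $\nu$ and the injections $x_j$ depend only on $\omega$ via the factorization of $\omega\sigma_{0,n}$ in $S_n$), of the $d_j$ cycles $\sigma_0^{mn}\pi_j\sigma_0^{-mn}$, $m = 0,\dots,d_j-1$, each of length $N_j = s_jN/(d_jn)$. Hence the total number of cycles in $\tau\sigma_0$ is $\sum_{j=1}^\nu d_j$, where $d_j = \gcd\bigl(\sum_{k=1}^{s_j}(u_{x_j(k+1)} - u_{x_j(k+1)-1} + \delta_j^{(k)}) + b_j,\ N/n\bigr)$. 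So the condition $\tau \in T$ becomes $\sum_{j=1}^\nu d_j = L$. The problem thus reduces to: for each $n$-cycle $\omega$ with associated data $(\nu; s_1,\dots,s_\nu; x_j; \delta_j^{(k)})$, count the pairs $(\boldsymbol u, b) \in E^{n-1}\times E^\times$ making the $\nu$ gcd's sum to $L$.

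The key observation that unlocks the count is that the $\nu$ linear forms $c_j := \sum_{k=1}^{s_j}(u_{x_j(k+1)} - u_{x_j(k+1)-1} + \delta_j^{(k)}) + b_j$ in the variables $u_2,\dots,u_n, b$ are, after reduction mod $N/n$, \emph{independent and uniformly distributed}: because $\sum_j s_j = n$ and the $x_j$ partition $\{1,\dots,n\}$, one can change variables so that $(c_1 \bmod N/n,\dots,c_\nu \bmod N/n)$ ranges uniformly over $(E)^\nu$ as $(\boldsymbol u,b)$ ranges over $E^{n-1}\times E^\times$ — more precisely, each fibre has the same size $(N/n)^{n-1-\nu}\cdot\varphi(N/n)$ divided appropriately; I'd check this by a telescoping/rank computation on the form matrix (the "$-u_{x_j(k+1)-1}$" terms telescope within each cycle). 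Consequently, writing $l_j = N_j$... rather, setting $l_j$ for the cycle-length data, the number of $(\boldsymbol u,b)$ with prescribed gcd-vector $(d_1,\dots,d_\nu)$ factors as a product over $j$ of "number of residues in $E$ with gcd $d_j$ to $N/n$", and summing over all $(d_1,\dots,d_\nu)$ with $\sum d_j = L$ and grouping by the partition $\lambda = (l_1 \geq \dots \geq l_\nu)$ of $L$ (with multiplicities $\nu_k$) produces the combinatorial factor $B_\lambda$, while the local gcd-counts over each prime $p \in P_{N/n}$ assemble via CRT into $\prod_{p} A_{n,\lambda,p}$; the factor $f^{(n)}_{n-m+1}$ and the normalization $N^n/(n^{n+1}(n+1))$ come from counting the $n$-cycles $\omega$ whose factorization $\omega\sigma_{0,n}$ has exactly $m = \nu$ cycles (an elementary-symmetric-polynomial count — the number of $\omega \in S_n$ with $\omega\sigma_{0,n}$ having $m$ cycles is $f^{(n)}_{n-m+1}/$(something), reflecting a genus/parity constraint forcing $n - m$ odd), together with the bookkeeping of how many $(\omega,\boldsymbol u,b)$ give each abstract $\tau$. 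I would then multiply everything out and match term-by-term with the definition of $\varPsi_{N,L,n}$.

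The main obstacle I expect is the counting of the $n$-cycles $\omega$ by the number $\nu$ of cycles of $\omega\sigma_{0,n}$, i.e. establishing the identity that yields $f^{(n)}_{n-m+1}$ and the prefactor $N^n/(n^{n+1}(n+1))$: this is where the parity phenomenon ($f^{(n)}_m = 0$ for even $m$, reflecting that $\omega$ and $\omega\sigma_{0,n}$ must have opposite signs so $n - 1$ and $\nu - 1$ have the same parity, equivalently $n - \nu$ is odd) enters, and relating a genus-type constraint to the elementary symmetric polynomial $\mathfrak{s}_{n-m+1}(1,2,\dots,n)$ requires a careful transitive-factorization argument (this is essentially a known count of factorizations of an $n$-cycle into two permutations, but one must be careful about transitivity and about the exact normalization). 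The gcd/CRT bookkeeping leading to $A_{n,\lambda,p}$ is routine but delicate — one must correctly handle the distinction between $i_0(n,\lambda,p)$ (indices $j$ with $p \nmid l_j$, forcing $p \nmid d_j$ automatically) and $i(n,\lambda,p)$ (indices where $p d_j \mid N/n$ is even possible) — and I would organize it as a product of independent local problems at each prime, which is the natural place for the $(1 - 1/p)$ and $(-1/p)$ terms to arise from inclusion–exclusion on divisibility by $p$.
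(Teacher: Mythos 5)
Your overall strategy is the same as the paper's: parametrize the $N$-cycles in $C(\sigma_0^n)$ by triples $(\omega,\boldsymbol u,b)$ via Lemmas~1 and~2, use Lemma~3 to see that the number of cycles of $\tau\sigma_0$ equals $\sum_{j=1}^\nu d_j$, count the pairs $(\boldsymbol u,b)$ realizing a prescribed gcd-vector, and then count the $n$-cycles $\omega$ according to the number of cycles of $\omega\sigma_{0,n}$. But two steps have genuine gaps. First, your key claim that the forms $c_j=\sum_{k=1}^{s_j}\bigl(u_{x_j(k+1)}-u_{x_j(k+1)-1}+\delta_j^{(k)}\bigr)+b_j$ are, modulo $N/n$, independent and uniformly distributed as $(\boldsymbol u,b)$ ranges over $E^{n-1}\times E^\times$ is false: telescoping the differences over all $j$ gives $\sum_{j=1}^\nu c_j\equiv 1+b\pmod{N/n}$, so the $c_j$ are coupled to one another and to the unit $b$ by a linear relation. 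The correct statement (the paper's Lemma~4) is that a change of variables identifies the relevant set of $(\boldsymbol u,b)$ with $W\times E^{n-\nu}$, where $W$ consists of $(\nu+1)$-tuples in $E^{\nu+1}$ with prescribed gcd's, last coordinate coprime to $N/n$, and coordinate sum $\equiv1\pmod{N/n}$; it is exactly this coupled constraint, handled by inclusion--exclusion over the augmented index set $I_p\cup\{\nu+1\}$ prime by prime, that produces the alternating term $\left(1-\frac1p\right)^{i_0(n,\lambda,p)+1}-\left(-\frac1p\right)^{i_0(n,\lambda,p)+1}$ in $A_{n,\lambda,p}$. A literal product of independent per-$j$ local counts, as you propose, would give a plain product of Euler-type factors and would not produce this term (nor the correct power of $N/n$); your hedge ``divided appropriately'' does not bridge that.

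Second, the count of $n$-cycles $\omega$ for which $\omega\sigma_{0,n}$ has exactly $m$ cycles is left unresolved: you write $f^{(n)}_{n-m+1}$ divided by ``something'' and flag it as the main obstacle, but the exact value $|R_m|=f^{(n)}_{n-m+1}/(n(n+1))$ is indispensable for the prefactor $N^n/(n^{n+1}(n+1))$ in $\varPsi_{N,L,n}$, and no transitive-factorization argument is actually supplied. The paper does not reprove this either; it quotes Zagier \cite{Z} (Theorem~1 together with Application~3), which identifies $|R_m|/(n-1)!$ with $(1+(-1)^{n-m})/(n+1)!$ times the coefficient of $\xi^m$ in $\xi(\xi+1)\cdots(\xi+n)$; your parity observation is consistent with this but is not a proof. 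Finally, your closing ``bookkeeping of how many $(\omega,\boldsymbol u,b)$ give each abstract $\tau$'' is unnecessary: by Lemmas~1 and~2 the correspondence $(\omega,\boldsymbol u,b)\mapsto\rho_{\omega,\boldsymbol u,b}$ is a bijection onto the set of $N$-cycles in $C(\sigma_0^n)$, so each $\tau$ arises exactly once.
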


We define 
$$V=\{\tau\in S_N \mid \tau \sigma_0 \text{ fixes just } h \text{ elements. }\}.$$

\begin{Proposition}
If a positive integer $h$ satisfy $h\leq N$ and $n$ belongs to $D^*$, then 
$$|V\cap C(\sigma_0^n)|=\varUpsilon_{N,h,n}.$$
\end{Proposition}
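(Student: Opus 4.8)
The plan is to count the $N$-cycles $\tau\in C(\sigma_0^n)$ for which $\tau\sigma_0$ has exactly $h$ fixed points, by the same stratification used in Lemma~3. Every $N$-cycle $\rho\in C(\sigma_0^n)$ is of the form $\rho_{\omega,\boldsymbol u,b}$ for a unique triple consisting of an $n$-cycle $\omega=(1\ a_2\ \dots\ a_n)$ in $S_n$, an $(n-1)$-tuple $\boldsymbol u\in E^{n-1}$ (with $u_1=0$), and a $b\in E^\times$; conversely each such triple gives an $N$-cycle in $C(\sigma_0^n)$. So I would fix $\omega$, read off from it the partition $s_1\geq\dots$ (really the multiset $\{x_1,\dots,x_\nu\}$ with $\sum s_j=n$) determined by the cycle decomposition $(1\ a_2\ \dots\ a_n)\sigma_{0,n}=\prod_j(a_{x_j(1)}\ \dots\ a_{x_j(s_j)})$, and then use Lemma~3: the cycles of $\rho\sigma_0$ are the $\sigma_0^{mn}\pi_j\sigma_0^{-mn}$ for $0\le m<d_j$, $1\le j\le\nu$, where $\pi_j$ is an $N_j$-cycle with $N_j=s_jN/(d_jn)$. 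A cycle of $\rho\sigma_0$ is a fixed point exactly when its length is $1$, i.e.\ when $s_j=1$ and $d_j=N/n$; and when $s_j=1$ the block contributes $d_1=N/n$ copies of a fixed point if also $b$ contributes correctly. Thus the count of fixed points of $\rho\sigma_0$ depends only on which $j$ have $s_j=1$ and, for those, on a congruence condition on the relevant $u$'s (and on $b$, for $j=1$).

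Concretely, for a block with $s_j=1$, say $x_j(1)=t$, the quantity inside the gcd defining $d_j$ is $u_{t}-u_{t-1}+\delta_j^{(1)}+b_j$, and $d_j=N/n$ iff this is $\equiv0\pmod{N/n}$; in that case the block produces $N/n$ fixed points, otherwise it produces none (its cycles have length $N_j=s_jN/(d_jn)>1$). A block with $s_j\ge2$ never produces fixed points. So the number of fixed points of $\rho\sigma_0$ equals $(N/n)$ times the number of length-one blocks whose attached linear congruence in the $u$-variables (resp.\ in $u$'s and $b$ for the block containing $1$) is satisfied. Since $N/n$ divides $h$ is precisely the hypothesis $n\in D^*$, I set $k=hn/N$ and must count triples $(\omega,\boldsymbol u,b)$ for which exactly $k$ of the singleton blocks have their congruence satisfied. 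The number $m$ in the outer sum of $\varUpsilon_{N,h,n}$ will be the total number of singleton blocks of $\omega$ (equivalently $n-m$ is the number of non-fixed points of $\omega\sigma_{0,n}$, i.e.\ roughly $\varSigma_m^{(n)}-\varSigma_{m+1}^{(n)}$ counts the $n$-cycles $\omega$ in $S_n$ with exactly $m$ singleton blocks — this is where the $\varSigma$'s enter, presumably proved in Section~2 or computed directly from the formula for $f^{(n)}_m$-type elementary-symmetric data for $S_n$). Given such an $\omega$, the $\binom{m}{k}$ counts the choice of which $k$ of the $m$ singletons are to satisfy their congruences.

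For the $\boldsymbol u$- and $b$-count I would argue as follows: the congruences attached to distinct singleton blocks involve disjoint-enough variables that, after fixing $b$, each congruence $u_t-u_{t-1}\equiv c\pmod{N/n}$ has, for a ``free'' block, exactly one solution for $u_t$ among the $N/n$ values of $E$, and requiring it to fail leaves $N/n-1$ choices; the remaining $u$-coordinates (those not pinned by a chosen congruence) are free over $E$, giving the powers $N^{n-m-1}/n^{n-m-1}$ and $(N/n-1)^{m-k}$ in the first sum. The block containing the index $1$ needs separate bookkeeping because $b\in E^\times$ rather than $E$ and because $b$ appears in that block's congruence; summing over $b\in E^\times$ and separating the case where this distinguished singleton is or is not among the chosen $k$ produces the $\varphi(N/n)$ factors and the correction term $\binom{n}{k}\bigl((\varphi(N/n)n/N)((N/n-1)^{n-k}-(-1)^{n-k})+(-1)^{n-k}\bigr)$, which is exactly the $m=n$ contribution (when $\omega=\sigma_{0,n}^{-1}$ or similar, all $n$ blocks are singletons) handled by an inclusion–exclusion over which singletons' congruences hold. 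Assembling: $|V\cap C(\sigma_0^n)|=\sum_m (\#\{\omega\text{ with }m\text{ singletons}\})\binom{m}{k}(\text{$\boldsymbol u,b$-count}) = \varUpsilon_{N,h,n}$.

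The main obstacle I expect is the careful inclusion–exclusion for the singleton block containing the index $1$, together with verifying that the congruences attached to the various singleton blocks are genuinely independent (so that the counts multiply): one must check that the linear forms $u_{x_j(1)}-u_{x_j(1)-1}$ over the singleton $j$'s are linearly independent modulo $N/n$ as functions of $\boldsymbol u$, using that $u_1=0$ is fixed and that the indices $x_j(1)$ are distinct, and handle the wrap-around convention $u_0=u_n$. The bookkeeping that the ``exactly $k$ succeed'' count splits as $\binom{m}{k}$ times a product of local counts, plus the boundary $m=n$ term via a second inclusion–exclusion, is where the two pieces of the formula for $\varUpsilon_{N,h,n}$ come from, and getting the signs and the $0^0=1$ convention right there is the delicate part. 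The enumeration of $n$-cycles $\omega$ in $S_n$ by number of singleton blocks of $\omega\sigma_{0,n}$ — i.e.\ the identity $\#\{\omega:\ \omega\text{ has }\ge j\text{ singletons, suitably weighted}\}$ in terms of $\varSigma^{(n)}_j$ — I would either cite from the preliminary section or reprove by a short generating-function / inclusion–exclusion computation, noting the sanity checks $\varSigma^{(n)}_n=\varSigma^{(n)}_{n-1}=0$ and $\varSigma^{(n)}_{n-2}=0$ built into the formula.
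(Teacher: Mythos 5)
Your plan is essentially the paper's own proof: the same parametrization of $N$-cycles in $C(\sigma_0^n)$ by triples $(\omega,\boldsymbol u,b)$, the same reading-off via Lemma~3 that fixed points of $\rho\sigma_0$ come in batches of $N/n$ from singleton blocks with $d_j=N/n$, and the two ingredients you defer are exactly the paper's Lemma~10 (the count of $n$-cycles $\omega$ with $\omega\sigma_{0,n}$ having exactly $m$ fixed points equals $\varSigma_m^{(n)}-\varSigma_{m+1}^{(n)}$) and Lemma~11 (the $\boldsymbol u$-count via the change of variables $u_k^*\equiv u_k-u_{k-1}$, with the inclusion--exclusion and the $b=N/n-1$ correction in the boundary case $\omega=\sigma_{0,n}^{-1}$). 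The delicate points you flag --- independence of the singleton congruences and the $m=n$ case --- are handled there precisely as you anticipate, so the proposal is correct in approach and matches the paper.
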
 

Proposition~2 and Proposition~3 leads us through Proposition~1 to Theorem~1 and Theorem~2, respectively. 
A proof of Proposition~2 is given in Section~3, and Proposition~3 is proved in Section~4.

\section{Proofs of Theorems 1 and 3}

Let $\lambda$ be any partition in $\varLambda^{(n)}_\nu$, still in the situation of Lemma~3. 
Assume that $\lambda$ is given by $L=l_1+\cdots+l_\nu$ with a decreasing sequence $l_1\geq\cdots\geq l_\nu$ of $\nu$ integers in $D_{N/n}$. 
For each positive integer $j\leq\nu$ and each $p\in P_{N/n}$, we denote by $l_{j,p}$ the $p$-part of $l_j$, i.e., the highest power of $p$ dividing $l_j$. 
In addition to the lemmas in the preceding section, we prove one more lemma for the proof of Proposition~1. 
To state it, we set 
$\boldsymbol d=(d_1,\dots,d_\nu)$ in $D_{N/n}^{\ \nu}$. 

\begin{Lemma}
Let $\alpha$ be any permutation in $S_\nu$. 
Then the number of all 
$(\boldsymbol u,b)\in E^{n-1}\times E^\times$ 
with $\boldsymbol d=(l_{\alpha(1)},\dots,l_{\alpha(\nu)})$ is equal to 
$$\frac{N^n}{n^n\varDelta_\lambda}\prod_{p\in P_{N/n}}A_{n,\lambda,p}.$$
\end{Lemma}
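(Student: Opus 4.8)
The plan is to count the pairs $(\boldsymbol u,b)\in E^{n-1}\times E^\times$ for which the divisors $d_1,\dots,d_\nu$ attached to the cycles $\pi_1,\dots,\pi_\nu$ of $\rho_{\omega,\boldsymbol u,b}\sigma_0$ (as in Lemma~3) equal a prescribed rearrangement $(l_{\alpha(1)},\dots,l_{\alpha(\nu)})$ of the parts of $\lambda$. Since $d_j=\gcd\!\left(\sum_{k=1}^{s_j}\bigl(u_{x_j(k+1)}-u_{x_j(k+1)-1}+\delta_j^{(k)}\bigr)+b_j,\ N/n\right)$, each $d_j$ depends only on a certain integer linear combination of the coordinates $u_2,\dots,u_n$ together with $b$ (only $d_1$ sees $b$). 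The first step is to recognize, via (4) and the combinatorics of the $x_j$, that the $\nu$ sums $S_j:=\sum_{k=1}^{s_j}\bigl(u_{x_j(k+1)}-u_{x_j(k+1)-1}+\delta_j^{(k)}\bigr)$ partition the ``difference'' variables so that specifying their residues modulo $N/n$ is, up to a unimodular change of coordinates, the same as specifying the residues modulo $N/n$ of $\nu-1$ of the $u_i$'s freely plus one linear relation; more precisely I expect to show the map $(\boldsymbol u,b)\mapsto$ (the reductions mod $N/n$ of $S_1+b,\ S_2,\dots,S_\nu$ and of $\boldsymbol u$ itself) fibers $E^{n-1}\times E^\times$ uniformly, so the count factors as $(N/n)^{\,n-\nu}$ times the number of $(t_1,\dots,t_\nu)$ with $t_j\in\mathbb Z/(N/n)$, $\gcd(t_j,N/n)=l_{\alpha(j)}$, and with the extra constraint coming from $b\in E^\times$, i.e.\ $\gcd(t_1-S_1,N/n)=1$ where $t_1-S_1$ plays the role of $b$.

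The second step is purely multiplicative (CRT over $p\in P_{N/n}$): writing $N/n=\prod_p p^{e_p}$, the condition $\gcd(t_j,N/n)=l_j$ means the $p$-adic valuation of $t_j$ is exactly $v_p(l_j)$ for each $p$, and the number of residues $t_j\bmod p^{e_p}$ with $v_p(t_j)=v_p(l_j)$ is $p^{e_p-v_p(l_j)}-p^{e_p-v_p(l_j)-1}=p^{e_p}(1-1/p)/l_{j,p}$ when $v_p(l_j)<e_p$ (i.e.\ $p l_j\mid N/n$) and is $p^{e_p}/l_{j,p}$ (the full set, since $t_j\equiv0$) when $v_p(l_j)=e_p$. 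For the special index carrying $b$: among the $i_0(n,\lambda,p)$ indices $j$ with $\gcd(l_j,p)=1$ we additionally need, for exactly one of them (the one in slot $1$, after the change of variables), that the corresponding shifted value be a unit mod $p$; this is where the refined factor $\bigl((1-1/p)^{i_0+1}-(-1/p)^{i_0+1}\bigr)$ arises — it counts the number of ways to choose the $p$-residues of the $i_0$ unit-valued $t_j$'s and of $b$'s $p$-part so that $b\equiv t_1-S_1$ stays a unit, via inclusion–exclusion on which of these $i_0+1$ quantities vanish mod $p$. Multiplying the per-prime counts and collecting, the total becomes $\dfrac{(N/n)^{\nu}}{\Delta_\lambda}\cdot\dfrac{\prod_p(\cdots)}{(N/n)^{\#}}$, and combined with the $(N/n)^{\,n-\nu}$ from the free coordinates this should telescope to $\dfrac{N^n}{n^n\Delta_\lambda}\prod_{p\in P_{N/n}}A_{n,\lambda,p}$, using $(N/n)^n=N^n/n^n$.

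The main obstacle I anticipate is the bookkeeping in the first step: verifying that the linear map from $(u_2,\dots,u_n,b)$ to $(S_1+b,S_2,\dots,S_\nu,\text{the remaining }u_i)$ is genuinely a bijection of $\bigl(\mathbb Z/(N/n)\bigr)^{n-1}\times E^\times$ onto the appropriate set — in particular that the $S_j$ are ``independent enough,'' which rests on the tree-like structure of how the injections $x_j$ glue $\{1,\dots,n\}$ together (equation (4) and the identity $a_{x_j(k)}+1=a_{x_j(k+1)-1}+\delta_j^{(k)}n$), and that the $\delta_j^{(k)}$ contribute a fixed shift that does not affect the count. A secondary subtlety is the $n=N$ edge case, where $E^\times=\{0\}$ and $N/n=1$, so every $l_j=1$, $\Delta_\lambda=1$, all $A_{n,\lambda,p}$ are empty products equal to $1$, and the claimed count is $1$, which must be checked to match (there is exactly one pair $(\boldsymbol u,b)=(\mathbf 0,0)$). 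Once the change of variables is pinned down, the rest is the routine $p$-by-$p$ computation sketched above, and I would present it by isolating a sublemma that does the single-prime count and then invoking CRT and multiplicativity of $\Delta_\lambda$.
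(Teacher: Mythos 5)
Your proposal is correct and follows essentially the same route as the paper's proof: your sums $S_j$ (with $t_1=S_1+b$) are exactly the paper's $\widetilde w_j$, the relation $\sum_j S_j\equiv 1\pmod{N/n}$ turns the unit condition on $b$ into the paper's auxiliary coordinate of the set $W$, the $n-\nu$ remaining difference variables give the factor $(N/n)^{n-\nu}$, and your prime-by-prime inclusion--exclusion over the $i_0+1$ unit slots is precisely how the paper derives $A_{n,\lambda,p}$. So this is the same decomposition and the same computation, only sketched rather than carried out in full.
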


\begin{proof}
We write $W$ for the set of all 
$(w_1,\dots,w_{\nu+1})\in E^{\nu+1}$ 
satisfying 
\begin{align*}
& \gcd\left(w_1,N/n\right)=l_{\alpha(1)},\ \ \dots,\ \ \gcd\left(w_\nu,N/n\right)=l_{\alpha(\nu)},\ \ \gcd\left(w_{\nu+1},N/n\right)=1,\\
& \sum_{j=1}^{\nu+1}w_j\equiv1\pmod{N/n},
\end{align*}
Let us prove 
\begin{equation}
|W|=\frac{N^\nu}{n^\nu\varDelta_\lambda}\prod_{p\in P_{N/n}}A_{n,\lambda,p}.
\end{equation}
Now, let $p$ be any prime divisor of $N/n$. 
Taking the positive integer $e(p)$ such that $p^{e(p)}$ is the $p$-part of $N/n$, we denote by $W_p$ the set of $(\nu+1)$-tuples $(w'_1,\dots,w'_{\nu+1})$ of non-negative integers less than $p^{e(p)}$ with 
$$l_{\alpha(1),p}\mid w'_1,\quad \dots,\quad l_{\alpha(\nu),p}\mid w'_\nu,\quad \sum_{j=1}^{\nu+1}w'_j\equiv1\pmod{p^{e(p)}}.$$ 
Let $J_p$ denote the set of $j\in\{1,2,\dots,\nu\}$ with $l_{\alpha(j),p}<p^{e(p)}$, $I_p$ the set of $j\in J_p$ with $l_{\alpha(j),p}=1$, and $W^{(p)}$ the set of $(w'_1,\dots,w'_{\nu+1})\in W_p$ such that $pl_{\alpha(j),p}\nmid w'_j$ for any $j\in J_p\setminus I_p$. 
Clearly 
$$|J_p|=i(n,\lambda,p)\geq|I_p|=i_0(n,\lambda,p).$$ 
When any multiple $w_j$ of $l_{\alpha(j),p}$, for each $j\in\{1,2,\dots,\nu\}$, are given, there exists a unique non-negative integer $w_{\nu+1}<p^{e(p)}$ satisfying $\sum_{j=1}^{\nu+1}w_j\equiv1\pmod{p^{e(p)}}$. 
Therefore 
$$|W_p|=\prod_{j=1}^\nu\frac{p^{e(p)}}{l_{j,p}}=\frac{p^{\nu e(p)}}{\prod_{j=1}^\nu l_{j,p}}$$ 
and consequently
\begin{equation}
\left|W^{(p)}\right|=|W_p|\left(1-\frac{1}{p}\right)^{|J_p\setminus I_p|}=\frac{p^{\nu e(p)}}{\prod_{j=1}^\nu l_{j,p}}\left(1-\frac{1}{p}\right)^{i(n,\lambda,p)-i_0(n,\lambda,p)}.
\end{equation}
We set $I'_p=I_p\cup\{\nu+1\}$.  
For each $j\in I'_p$, let $W_j^{(p)}$ denote the subset of $W^{(p)}$ consisting of $(w'_1,\dots,w'_{\nu+1})\in W^{(p)}$ with $p\mid w'_j$. 
Let $I$ range over the non-empty subsets of $I'_p$. 
Then we naturally have 
$$\left|W^{(p)}\setminus\bigcup_{j\in I'_p}W^{(p)}_j\right|=|W^{(p)}|+\sum_I(-1)^{|I|}\left|\bigcap_{j\in I} W^{(p)}_j\right|$$
and, unless $I=I'_p$, an argument similar to the one verifying (6) yields 
$$\left|\bigcap_{j\in I} W^{(p)}_j\right|=\frac{|W^{(p)}|}{p^{|I|}}$$ 
(with any element of $I'_p\setminus I$ taken instead of the subscript $\nu+1$). 
Since $\bigcap_{j\in I'_p} W^{(p)}_j=\emptyset$, it follows from the above that 
\begin{align*}
\left|W^{(p)}\setminus\bigcup_{j\in I'_p}W^{(p)}_j\right| & =\left|W^{(p)}\right|\left(\sum_{k=0}^{|I'_p|}\binom{|I'_p|}{k}\left(-\frac{1}{p}\right)^k-\left(-\frac{1}{p}\right)^{|I'_p|}\right)\\
& =\left|W^{(p)}\right|\left(\left(1-\frac{1}{p}\right)^{i_0(\lambda,p)+1}-\left(-\frac{1}{p}\right)^{i_0(\lambda,p)+1}\right).
\end{align*}
Hence, by (6), 
$$\left|W^{(p)}\setminus\bigcup_{j\in I_p}W^{(p)}_j\right|=\frac{p^{\nu e(p)}A_{n,\lambda,p}}{\prod_{j=1}^\nu l_{j,p}}.$$ 
On the other hand, $W^{(p)}\setminus\bigcup_{j\in I'_p}W^{(p)}_j$ is none other than the set of $(\nu+1)$-tuples $(w'_1,\dots,w'_{\nu+1})$ of non-negative integers less than $p^{e(p)}$ for which 
\begin{align*}
& \gcd\left(w'_1,p^{e(p)}\right)=l_{\alpha(1),p},\quad\dots,\quad\gcd\left(w'_\nu,p^{e(p)}\right)=l_{\alpha(\nu),p},\quad p\nmid w'_{\nu+1}, \\
& \sum_{j=1}^{\nu+1} w'_j\equiv1\pmod{p^{e(p)}}.
\end{align*}
This fact implies that 
$$|W|=\prod_{p\in P_{N/n}}\left|W^{(p)}\setminus\bigcup_{j\in I'_p}W^{(p)}_j\right|,$$ 
because a $(\nu+1)$-tuple 
$(w_1,\dots,w_{\nu+1})$ in $E^{\nu+1}$ 
belongs to $W$ if and only if 
\begin{align*}
& \gcd\left(w^*_{1,p},p^{e(p)}\right)=l_{\alpha(1),p},\quad\dots,\quad\gcd\left(w^*_{\nu,p},p^{e(p)}\right)=l_{\alpha(\nu),p},\quad p\nmid w^*_{\nu+1,p},\\
& \sum_{j=1}^{\nu+1}w^*_{j,p}\equiv1\pmod{p^{e(p)}},
\end{align*}
with $p$ running through $P_{N/n}$, where $w^*_{1,p}$, \dots, $w^*_{\nu+1,p}$ denote respectively the minimal non-negative residues of $w_1$, \dots, $w_{\nu+1}$ modulo $p^{e(p)}$. 
Thus (5) is proved. 

Next, putting $u_1=0$ as before, let $(u_2,\dots,u_n)$ 
run through $E^{n-1}$. 
For each pair $(j,k)$ of positive integers with $j\leq\nu$ and $k\leq s_j$, we take 
the integer $w_j^{(k)}$ in $E$ 
satisfying 
$$w_j^{(k)}\equiv u_{x_j(k+1)}-u_{x_j(k+1)-1}+\delta_j^{(k)}\pmod{N/n}.$$
It follows from (4) that all $u_{x_j(k+1)}-u_{x_j(k+1)-1}$ for $(j,k)\not=(1,s_1)$ can be arranged into the sequence $u_2-u_1,\dots,u_n-u_{n-1}$, so that the correspondence 
$$(u_2,\dots,u_n)\mapsto\left(w_1^{(1)},\dots,w_1^{(s_1-1)},w_2^{(1)},\dots,w_2^{(s_2)},\dots,w_\nu^{(1)},\dots,w_\nu^{(s_\nu)}\right)$$ 
defines a permutation on 
$E^{n-1}$. 
For each positive integer $j\leq\nu$, take 
the integer $\widetilde w_j$ in $E$ 
satisfying 
$$\widetilde w_j\equiv\sum_{k=1}^{s_j}w_j^{(k)}+b_j\pmod{N/n}.$$
Since 
\begin{align*}
\sum_{j=1}^\nu\widetilde w_j-b & \equiv\sum_{j=1}^\nu\sum_{k=1}^{s_j}\left(u_{x_j(k+1)}-u_{x_j(k+1)-1}+\delta_j^{(k)}\right)+\sum_{j=1}^\nu b_j-b\\
& \equiv\sum_{j=1}^n(u_j-u_{j-1})+1\equiv1\pmod{N/n},
\end{align*}
we have $(d_1,\dots,d_\nu)=(l_{\alpha(1)},\dots,l_{\alpha(\nu)})$ if and only if $(\widetilde w_1,\dots,\widetilde w_\nu,\tilde b)$ belongs to $W$ where $\tilde b$ denotes the minimal non-negative residue of $-b$ modulo $N/n$. 
Furthermore 
$$w_2^{(s_2)}\equiv\widetilde w_2-\sum_{k=1}^{s_2-1}w_2^{(k)}\pmod{N/n},\quad\dots,\quad w_\nu^{(s_\nu)}\equiv\widetilde w_\nu-\sum_{k=1}^{s_\nu-1}w_\nu^{(k)}\pmod{N/n}.$$ 
Thus the correspondence  
$$(u_2,\dots,u_n,b)\mapsto\left(\widetilde w_1,\dots,\widetilde w_\nu,\tilde b,w_1^{(1)},\dots,w_1^{(s_1-1)},\dots,w_\nu^{(1)},\dots,w_\nu^{(s_\nu-1)}\right)$$ 
induces a bijection from the set of $(\boldsymbol u,b)$ with $\boldsymbol d=(l_{\alpha(1)},\dots,l_{\alpha(\nu)})$ to the direct product 
$W\times E^{n-\nu}$. 
Therefore, by (5), the number in question of the lemma is
$$|W|\left(\frac{N}{n}\right)^{n-\nu}=\frac{N^n}{n^n\varDelta_\lambda}\prod_{p\in P_{N/n}}A_{n,\lambda,p}.$$ 
\end{proof}

We are now ready to prove Proposition~2.\\ 

\noindent
{\it Proof of Proposition 2.} 
Denote by $\varPi_n$ the direct product of the set of $n$-cycles in $S_n$ 
and $E^{n-1}\times E^\times$. 
We let $\omega=(1\ a_2\ \dots\ a_n)$ range over all $n$-cycles in $S_n$, $\boldsymbol u=(u_2,\dots,u_n)$ over all $(n-1)$-tuples in $E^{n-1}$, and $b$ over all integers in $E^\times$. 
In view of Lemma~1, a map from the set of $N$-cycles in $C(\sigma_0^n)$ to $\varPi_n$ can be defined by sending each $N$-cycle $\tau$ in $C(\sigma_0^n)$ to 
$$\left((1\ a_2'\ \dots\ a_n'),\frac{\tau(1)-a_2'}{n},\dots,\frac{\tau^{n-1}(1)-a_n'}{n},\frac{\tau^n(1)-1}{n}\right),$$
where $a_j'$ denotes, for each integer $j$ with $2\leq j<n$, the minimal positive residue of $\tau^{j-1}(1)$ modulo $n$. 
Furthermore Lemma~2 for the case $(m,d)=(n,1)$ shows that this map is a bijection. 
In other words, the correspondence  
$$((1\ a_2\ \dots\ a_n),u_2,\dots,u_n,b)\mapsto\rho=\rho_{\omega,\boldsymbol u,b} $$
defines a bijection from $\varPi_n$ to the set of all $N$-cycles in $C(\sigma_0^n)$. 
Let $m$ range over the positive integers not exceeding $n$. 
Let $R_m$ denote the set of $\omega$ with $\nu=m$, i.e., the set of $(1\ a_2\ \dots\ a_n)$ for which the cycle decomposition (3) of $(1\ a_2\ \dots\ a_n)\sigma_{0,n}$ in $S_n$ contains just $m$ cycles. 
Since
$$\left|\left\{\left(l_{\alpha(1)},\dots,l_{\alpha(\nu)}\right)\,;\ \alpha\in S_\nu\right\}\right|=\frac{\nu!}{\nu_1!\nu_2!\cdots\nu_L!}=B_\lambda\varDelta_{\lambda},$$
we then see from Lemma~4 that, for each $\omega\in R_m$ and any $\lambda\in\varLambda_m^{(n)}$ given by $L=l_1+\cdots+l_m$ with a decreasing sequence $l_1\geq\cdots\geq l_m$ of $m$ integers in $D_{N/n}$, the number of $(\boldsymbol u,b)$ with $\nu=m$ and with $(d_1,\dots,d_m)=(l_{\alpha(1)},\dots,l_{\alpha(m)})$ for some $\alpha\in S_m$ is equal to 
$$\frac{N^nB_\lambda}{n^n}\prod_{p\in P_{N/n}}A_{n,\lambda,p}.$$
However, by Lemma~3, the number of cycles in the cycle decomposition of $\rho\sigma_0$ coincides with $\sum_{j=1}^\nu d_j$. 
Therefore, for each $\omega\in R_m$, the number of $\rho=\rho_{\omega,\boldsymbol u,b}$ such that the cycle decomposition of $\rho\sigma_0$  contains exactly $L$ cycles, namely the number of $\rho$ belonging to $T$, is equal to
$$\frac{N^n}{n^n}\sum_{\lambda\in\varLambda_m^{(n)}}B_\lambda\prod_{p\in P_{N/n}}A_{n,\lambda,p}.$$
Further $\varLambda_m^{(n)}=\emptyset$ if $m>L$. 
We thus obtain  
$$|T\cap C(\sigma_0^n)|=\sum_{m=1}^{\min(n,L)}\frac{|R_m|N^n}{n^n}\sum_{\lambda\in\varLambda_m^{(n)}}B_\lambda\prod_{p\in P_{N/n}}A_{n,\lambda,p}.$$
On the other hand, Theorem~1 of Zagier \cite{Z} together with \cite[Application~3]{Z} shows that $|R_m|/(n-1)!$ equals $(1+(-1)^{n-m})/(n+1)!$ times the coefficient of $\xi^m$ in the polynomial $\xi(\xi+1)\cdots(\xi+n)$ in a variable $\xi$: 
$$|R_m|=\frac{1+(-1)^{n-m}}{n(n+1)}\mathfrak s_{n-m+1}(1,2,\dots,n)=\frac{f_{n-m+1}^{(n)}}{n(n+1)}$$
(cf.\ also \cite[A.2]{L-Z}). 
Hence the proposition is proved. 
\qed\\

Let us prove Theorem~1 after simple preliminaries. 
We note that $\{\sigma_0\}\times S_N\subseteq\mathcal P$.  
Let $(\sigma,\tau)\in\iota(\mathfrak c)$, with $\mathfrak c\in\mathfrak D_1$. 
Then, by Proposition~1,  $\sigma$ and $\tau$ are $N$-cycles and the cycle decomposition of $\tau\sigma$ contains exactly $L$ cycles. 
Since $\sigma_0=\rho\sigma\rho^{-1}$ for some $\rho\in S_N$, we have not only $(\sigma_0,\rho\tau\rho^{-1})\in\iota(\mathfrak c)$ but
 $\rho\tau\rho^{-1}\sigma_0=\rho\tau\sigma\rho^{-1}$. 
Thus the following result holds. 

\begin{Lemma}
The image $\iota(\mathfrak D_1)$ consists of the equivalence classes of all $(\sigma_0,\tau)$, $\tau\in T$. 
\end{Lemma}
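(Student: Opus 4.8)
The plan is to unwind the one-to-one correspondence $\iota\colon\mathfrak D\to\mathfrak P$ (restricted to $\mathfrak D_1$) together with the combinatorial dictionary of Proposition~1, and then to normalize the first component of each representative pair to $\sigma_0$. First I would recall that a class $\mathfrak c\in\mathfrak D_1$ is, by definition, the class of a dessin with $N$ edges, two vertices and $L$ faces; under $\iota$ it corresponds to a class of permutation representation pairs $(\sigma,\tau)\in\mathcal P$. By Proposition~1(i)--(ii), the two vertices of the dessin force $\sigma$ and $\tau$ each to have exactly one cycle in their cycle decomposition, i.e.\ both are $N$-cycles (here one uses that the pair generates a transitive subgroup, so neither $\sigma$ nor $\tau$ can fix a point while having a single nontrivial cycle of length $<N$; a single cycle on all $N$ symbols is the only way to get cycle-count one and still be transitive-compatible). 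By Proposition~1(iii), the $L$ faces force $\tau\sigma$ to have exactly $L$ cycles in its cycle decomposition.

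Next I would perform the normalization. Since $\sigma$ is an $N$-cycle, it is conjugate in $S_N$ to $\sigma_0=(1\ 2\ \dots\ N)$, so there is $\rho\in S_N$ with $\rho\sigma\rho^{-1}=\sigma_0$. Replacing $(\sigma,\tau)$ by the equivalent pair $(\rho\sigma\rho^{-1},\rho\tau\rho^{-1})=(\sigma_0,\rho\tau\rho^{-1})$, and noting the identity $\rho\tau\rho^{-1}\cdot\sigma_0=\rho(\tau\sigma)\rho^{-1}$ already recorded just before the lemma, we see that $\tau':=\rho\tau\rho^{-1}$ is again an $N$-cycle and $\tau'\sigma_0$ is conjugate to $\tau\sigma$, hence also has exactly $L$ cycles. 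Thus $\tau'\in T$ by the definition of $T$, and $\iota(\mathfrak c)$ is the equivalence class of $(\sigma_0,\tau')$ with $\tau'\in T$. This shows $\iota(\mathfrak D_1)$ is contained in the set of equivalence classes of pairs $(\sigma_0,\tau)$ with $\tau\in T$.

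For the reverse inclusion, I would start from an arbitrary $\tau\in T$. Since $\tau$ is an $N$-cycle it generates a transitive subgroup of $S_N$ already, so a fortiori $\langle\sigma_0,\tau\rangle$ is transitive and $(\sigma_0,\tau)\in\mathcal P$; this uses the observation $\{\sigma_0\}\times S_N\subseteq\mathcal P$ noted in the text, which itself is immediate because $\langle\sigma_0\rangle$ is already transitive. Let $\mathfrak c=\iota^{-1}$ of the class of $(\sigma_0,\tau)$. By Proposition~1 applied in reverse: $\sigma_0$ and $\tau$ each being $N$-cycles gives one white and one black vertex, hence two vertices; and $\tau\sigma_0$ having exactly $L$ cycles (which is what $\tau\in T$ means) gives exactly $L$ faces. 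Therefore $\mathfrak c\in\mathfrak D_1$, so every class of a pair $(\sigma_0,\tau)$ with $\tau\in T$ lies in $\iota(\mathfrak D_1)$. Combining the two inclusions gives the lemma.

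I do not expect a genuine obstacle here; the statement is essentially a bookkeeping consequence of the correspondence $\iota$ and Proposition~1, and the only point requiring a moment's care is the deduction "two vertices $\Rightarrow$ both $\sigma$ and $\tau$ are full $N$-cycles'', which relies on transitivity ruling out a dessin with a single vertex of degree $<N$ on $N$ edges. Everything else is conjugation-invariance of cycle type and of the number of cycles in $\tau\sigma$, which is already made explicit in the discussion preceding the lemma.
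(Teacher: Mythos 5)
Your argument is correct and is essentially the paper's own: use Proposition~1 to translate ``two vertices, $L$ faces'' into ``$\sigma,\tau$ are $N$-cycles and $\tau\sigma$ has $L$ cycles,'' then conjugate the pair so the first component becomes $\sigma_0$, noting $\rho\tau\rho^{-1}\sigma_0=\rho\tau\sigma\rho^{-1}$. The only difference is that you spell out the reverse inclusion (via $\{\sigma_0\}\times S_N\subseteq\mathcal P$ and Proposition~1 applied in reverse), which the paper leaves implicit.
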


As any $\sigma\in C(\sigma_0)$ and any $j\in\{1,2,\dots,N\}$ satisfy 
$$\sigma(j)=\sigma\sigma_0^{j-1}(1)=\sigma_0^{j-1}\sigma(1)=\sigma_0^{j-1+\sigma(1)-1}(1)=\sigma_0^{\sigma(1)-1}(j),$$
we see that $C(\sigma_0)$ is a cyclic group generated by $\sigma_0$: $C(\sigma_0)=\langle\sigma_0\rangle$. 

\begin{Lemma}
For any $\tau\in S_N$, $\{(\sigma_0, \sigma\tau\sigma^{-1})\,;\,\sigma\in\langle\sigma_0\rangle\}$ coincides with the set of permutation representation pairs in $\{\sigma_0\}\times S_N$ equivalent to $(\sigma_0, \tau)$. 
The cardinality of this set equals the group index $[\langle\sigma_0\rangle:\langle\sigma_0\rangle\cap C(\tau)]$. 
\end{Lemma}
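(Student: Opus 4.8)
The plan is to prove the two assertions in order: first that the set $\{(\sigma_0,\sigma\tau\sigma^{-1}) : \sigma\in\langle\sigma_0\rangle\}$ is exactly the collection of permutation representation pairs in $\{\sigma_0\}\times S_N$ that are equivalent to $(\sigma_0,\tau)$, and then that its cardinality is the index $[\langle\sigma_0\rangle:\langle\sigma_0\rangle\cap C(\tau)]$. For the first assertion, one inclusion is immediate: if $\sigma\in\langle\sigma_0\rangle=C(\sigma_0)$, then $\sigma\sigma_0\sigma^{-1}=\sigma_0$, so $(\sigma_0,\sigma\tau\sigma^{-1})=(\sigma\sigma_0\sigma^{-1},\sigma\tau\sigma^{-1})$ is equivalent to $(\sigma_0,\tau)$ by the very definition of the equivalence relation on $\mathcal P$; and it lies in $\{\sigma_0\}\times S_N$ trivially. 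Conversely, suppose $(\sigma_0,\tau')\in\{\sigma_0\}\times S_N$ is equivalent to $(\sigma_0,\tau)$, say $(\sigma_0,\tau')=(\rho\sigma_0\rho^{-1},\rho\tau\rho^{-1})$ for some $\rho\in S_N$. Then $\rho\sigma_0\rho^{-1}=\sigma_0$ forces $\rho\in C(\sigma_0)=\langle\sigma_0\rangle$, and hence $\tau'=\rho\tau\rho^{-1}$ with $\rho\in\langle\sigma_0\rangle$, which is precisely membership in the displayed set. This establishes the set equality, and the only real input is the identity $C(\sigma_0)=\langle\sigma_0\rangle$, which has already been proved in the excerpt just above the lemma.

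For the cardinality statement, I would exhibit the map $\langle\sigma_0\rangle\to\{(\sigma_0,\sigma\tau\sigma^{-1})\}$ sending $\sigma\mapsto(\sigma_0,\sigma\tau\sigma^{-1})$ as a surjection and count its fibers. Two elements $\sigma,\sigma'\in\langle\sigma_0\rangle$ have the same image if and only if $\sigma\tau\sigma^{-1}=\sigma'\tau\sigma'^{-1}$, i.e. $\sigma'^{-1}\sigma$ commutes with $\tau$, i.e. $\sigma'^{-1}\sigma\in C(\tau)$; since $\sigma,\sigma'\in\langle\sigma_0\rangle$ this says $\sigma'^{-1}\sigma\in\langle\sigma_0\rangle\cap C(\tau)$. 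Thus the fibers are precisely the left cosets of the subgroup $\langle\sigma_0\rangle\cap C(\tau)$ in $\langle\sigma_0\rangle$, so the number of distinct images is $[\langle\sigma_0\rangle:\langle\sigma_0\rangle\cap C(\tau)]$, as claimed.

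This lemma is almost entirely formal, so I do not anticipate a genuine obstacle; the one point requiring care is to make sure the equivalence relation is being applied with the conjugating element on the correct side (the paper's convention is $(\sigma,\tau)=(\rho\sigma'\rho^{-1},\rho\tau'\rho^{-1})$), so that the condition $\rho\sigma_0\rho^{-1}=\sigma_0$ is the right one to extract, and then to invoke $C(\sigma_0)=\langle\sigma_0\rangle$ at exactly that moment. Everything else is the standard orbit–stabilizer bookkeeping for the conjugation action of $\langle\sigma_0\rangle$ on $S_N$ restricted to the orbit of $\tau$.
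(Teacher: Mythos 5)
Your proof is correct and follows the same route as the paper, which simply observes that $\sigma\sigma_0\sigma^{-1}=\sigma_0$ forces $\sigma\in C(\sigma_0)=\langle\sigma_0\rangle$ and then deduces the cardinality by the standard coset count; you have merely written out both inclusions and the fiber argument that the paper leaves implicit. No gaps.
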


\begin{proof}
The relation $\sigma\sigma_0\sigma^{-1}=\sigma_0$ with $\sigma\in S_N$ means that $\sigma\in C(\sigma_0)=\langle\sigma_0\rangle$, and so the first assertion holds. 
The second assertion is easily deduced from the first. 
\end{proof}


\noindent
{\it Proof of Theorem 1.}
Take any $r\in D_N$. 
For each $n\in D_{N/r}$, let $T_n$ denote the set of $\tau\in T$ with $[\langle\sigma_0\rangle:\langle\sigma_0\rangle\cap C(\tau)]=n$, i.e., $|C(\sigma_0)\cap C(\tau)|=N/n$. 
Simultaneously let $\mathfrak Q_n$ denote the set of the equivalence classes of $(\sigma_0,\tau)$ for all $\tau\in T_n$. 
As is known, the automorphism group of a dessin is isomorphic to $C(\sigma)\cap C(\tau')$ if $(\sigma,\tau')$ is a permutation representation pair of the dessin (cf.\ \cite[Theorem~4.40]{G-G}). 
Hence Lemma~5 shows that 
$$\mathfrak Q_{N/r}=\iota(\mathfrak D_{1,r}).$$ 
Now, let $u$ be any positive divisor of $N/r$. 
Clearly, a permutation $\tau'$ in $S_N$ belongs to $C(\sigma_0^u)$, i.e., $C(\tau')$ contains $\langle\sigma_0^u\rangle$ if and only if $[\langle\sigma_0\rangle:\langle\sigma_0\rangle\cap C(\tau')]$ divides $u$. 
Therefore we find that $T\cap C(\sigma_0^u)$ is the disjoint union of $T_n$ for all $n\in D_u$. 
Thus 
$$|T\cap C(\sigma_0^u)|=\sum_{n\in D_u}|T_n|,$$
so that the M\"obius inversion formula in this case yields 
$$|T_u|=\sum_{n\in D_u}\mu(u/n)|T\cap C(\sigma_0^n)|.$$
By Lemma~6, $\{\sigma_0\}\times T_u$ is the set of all $(\sigma_0,\tau)\in\{\sigma_0\}\times T$ for which the number of permutation representation pairs in $\{\sigma_0\}\times S_N$ equivalent to $(\sigma_0,\tau)$ is equal to $u$; further, all permutation representation pairs in $\{\sigma_0\}\times S_N$ equivalent to a pair in $\{\sigma_0\}\times T_u$ belong to $\{\sigma_0\}\times T_u$.
Hence we see that 
$$\left|\iota^{-1}(\mathfrak Q_u)\right|=|\mathfrak Q_u|=\frac{|\{\sigma_0\}\times T_u|}{u}=\frac{|T_u|}{u}=\frac{1}{u}\sum_{n\in D_u}\mu(u/n)|T\cap C(\sigma_0^n)|$$
and in particular that 
$$|\mathfrak D_{1,r}|=\left|\iota^{-1}(\mathfrak Q_{N/r})\right|=\frac{r}{N}\sum_{n\in D_{N/r}}\mu(N/(nr))|T\cap C(\sigma_0^n)|.$$
Proposition~2 therefore completes the proof of Theorem~1. 
 \qed\\
 
Before proving Theorem~3, let $\epsilon_1$ be the permutation on $\mathcal P$ such that 
$$\epsilon_1(\sigma,\tau)=\left(\tau\sigma,\tau^{-1}\right)\quad\text{for all}\ (\sigma,\tau)\in\mathcal P,$$ 
and $\epsilon_2$ the permutation on $\mathcal P$ such that 
$$\epsilon_2(\sigma,\tau)=\left(\sigma^{-1},\tau\sigma\right)\quad\text{for all}\ (\sigma,\tau)\in\mathcal P.$$ 
Clearly $\epsilon_1$ induces a permutation on $\mathfrak P$, which we denote by $\widetilde{\epsilon_1}$. 
We also write $\widetilde{\epsilon_2}$ for the permutation on $\mathfrak P$ induced by $\epsilon_2$.\\ 

\noindent
{\it Proof of Theorem 3.} 
We take any $r\in D_N$. 
Let $\mathfrak Q_{N/r}$ be the same as in the proof of Theorem~1. 
Since 
$$C(\tau\sigma)\cap C(\tau^{-1})=C(\sigma)\cap C(\tau)=C(\sigma^{-1})\cap C(\tau\sigma)$$ 
for every $(\sigma,\tau)\in\mathcal P$ and since $\mathfrak Q_{N/r}=\iota(\mathfrak D_{1,r})$ as is already shown, we see from Proposition~1 that 
$$\iota^{-1}(\widetilde{\epsilon_1}(\iota(\mathfrak D_{1,r})))=\mathfrak D'_{1,r},\quad\iota^{-1}(\widetilde{\epsilon_2}(\iota(\mathfrak D_{1,r})))=\mathfrak D''_{1,r}.$$
Hence Theorem~1 yields Theorem~3. 
\qed\\

\section{Proofs of Theorems 2 and 4}

To prove Theorem~2 as well as Proposition~3, we need several preliminary results. 
Take any $n\in\mathbb N$.
We do not assume $n$ to divide $N$ at first. 
 
\begin{Lemma}
If $j\in\mathbb N$ and $j\leq n$, then 
$$\sum_{m=j}^n(-1)^m\binom{m-1}{j-1}\binom{n}{m}=(-1)^j.$$
\end{Lemma}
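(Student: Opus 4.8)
The plan is to prove the identity
$$\sum_{m=j}^n(-1)^m\binom{m-1}{j-1}\binom{n}{m}=(-1)^j$$
by a straightforward generating-function / binomial-series argument, treating the left-hand side as the coefficient of a suitable power of a formal variable.

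First I would rewrite $\binom{m-1}{j-1}$ as the coefficient of $x^{m-j}$ in $(1-x)^{-j}$; indeed $(1-x)^{-j}=\sum_{k\ge 0}\binom{j-1+k}{j-1}x^k$, so $\binom{m-1}{j-1}=[x^{m-j}](1-x)^{-j}$ for $m\ge j$. Substituting this into the sum and interchanging the (finite) summation with the coefficient extraction, the left-hand side becomes
$$[x^{-j}]\;(1-x)^{-j}\sum_{m=j}^n(-1)^m\binom{n}{m}x^{m}.$$
Since the extra terms $m<j$ contribute powers $x^m$ with $m<j$, hence do not affect the coefficient of $x^{j}$ inside $\sum_m(-1)^m\binom{n}{m}x^m\cdot x^{-j}$ once we are extracting $[x^0]$ of $(1-x)^{-j}\sum_{m}(-1)^m\binom nm x^{m-j}$ — I would instead simply complete the sum to run over all $m$ from $0$ to $n$, noting the added terms contribute only to coefficients of $x^{m-j}$ with $m-j<0$, which vanish after we multiply by the power series $(1-x)^{-j}$ (which has no negative powers) and read off the constant term. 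Then $\sum_{m=0}^n(-1)^m\binom nm x^m=(1-x)^n$, so the whole expression collapses to
$$[x^{j}]\,(1-x)^{n}(1-x)^{-j}=[x^{j}]\,(1-x)^{n-j}=(-1)^j\binom{n-j}{j}\cdot\frac{1}{?}$$
— wait, more carefully: $[x^j](1-x)^{n-j}=(-1)^j\binom{n-j}{j}$, which is not what we want, so I would be more careful about which coefficient is being extracted.

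Let me restructure: the cleaner route is to note $\binom{m-1}{j-1}=[x^{m}]\,\frac{x^{j}}{(1-x)^{j}}$ (valid for all $m\ge 1$, with the convention giving $0$ for $m<j$). Then
$$\sum_{m=j}^n(-1)^m\binom{m-1}{j-1}\binom{n}{m}=\sum_{m=0}^n\binom nm(-1)^m[x^m]\frac{x^j}{(1-x)^j}=[\text{eval}]\ \frac{x^j}{(1-x)^j}\Big|_{\text{via }(1+(-x)\,\cdot)}$$
i.e. applying the operator "substitute and sum against $\binom nm(-1)^m$" corresponds, for a power series $F(x)=\sum a_m x^m$, to forming $\sum_m\binom nm(-x)^m a_m$ is not simply $F$ at a point; rather I should use the finite-difference identity directly. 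The genuinely clean argument: set $F(x)=\dfrac{x^{j}}{(1-x)^{j}}=\Big(\dfrac{x}{1-x}\Big)^{j}$, whose coefficients are $a_m=\binom{m-1}{j-1}$; then $\sum_{m=0}^{n}\binom nm(-1)^m a_m$ is exactly the value obtained by the umbral substitution $a_m\mapsto a_m$, and by the standard identity $\sum_{m}\binom nm(-1)^m[x^m]F(x)$ equals $[x^n]$ of $(1-x)^nF\!\big(\tfrac{x}{1-x}\big)$ after the transformation $x\mapsto x/(1-x)$. Computing $\tfrac{x/(1-x)}{1-x/(1-x)}=\tfrac{x}{1-2x}$ gives $F(x/(1-x))=\big(\tfrac{x}{1-2x}\big)^j$, so the sum is $[x^n]\,(1-x)^n\big(\tfrac{x}{1-2x}\big)^{j}$, and here I would see the computation does not immediately give $(-1)^j$, signalling I should instead just go with induction.

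Given the delicacy of bookkeeping above, the approach I would actually commit to is a short induction on $n$ (for fixed $j$), or better still a direct telescoping using Pascal's rule $\binom nm=\binom{n-1}{m}+\binom{n-1}{m-1}$. Write $S(n,j)$ for the left-hand side. Using Pascal on $\binom nm$ splits $S(n,j)=S(n-1,j)+\sum_{m=j}^n(-1)^m\binom{m-1}{j-1}\binom{n-1}{m-1}$; in the second sum shift $m\to m+1$ and use $\binom{m}{j-1}=\binom{m-1}{j-1}+\binom{m-1}{j-2}$ to relate it to $-S(n-1,j)-S(n-1,j-1)$, yielding $S(n,j)=-S(n-1,j-1)$. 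Combined with the base case $S(n,0)$ or $S(j,j)=(-1)^j$ (a single term $\binom{j-1}{j-1}\binom{j}{j}=1$), this recursion $S(n,j)=-S(n-1,j-1)$ immediately gives $S(n,j)=(-1)^{n-j}S(j',j')$... which forces me to check that $S(n,j)$ is actually independent of $n$; the recursion as I have derived it must therefore be $S(n,j)=S(n-1,j-1)\cdot(-1)$ only if I have mis-split, so the real identity to verify by this manipulation is $S(n,j)=S(n-1,j-1)$, whence $S(n,j)=S(n-j,0)$ and one computes $S(k,0)$ directly. \textbf{The main obstacle} is precisely getting this reduction right: correctly handling the boundary terms $m=j$ and $m=n$ when applying Pascal's rule twice, and identifying the correct one-step recursion so that it telescopes to the constant $(-1)^j$. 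Once the recursion $S(n,j)=S(n-1,j-1)$ (or its sign-corrected form) is established, the base case is a one-line evaluation and the proof is complete.
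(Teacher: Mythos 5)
As written, your proposal does not close: it stops exactly at the point you yourself flag as ``the main obstacle,'' with the one-step recursion left undetermined, and the option you finally lean toward is the wrong one. Writing $S(n,j)=\sum_{m=j}^n(-1)^m\binom{m-1}{j-1}\binom{n}{m}$, the recursion you derived first, $S(n,j)=-S(n-1,j-1)$, is in fact correct for $n>j\geq 2$ (Pascal on $\binom{n}{m}$, then the shift $m\mapsto m+1$ and the split $\binom{k}{j-1}=\binom{k-1}{j-1}+\binom{k-1}{j-2}$, whose boundary term $k=j-1$ feeds only into the $S(n-1,j-1)$ piece because $\binom{j-2}{j-1}=0$). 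Your subsequent ``correction'' to $S(n,j)=S(n-1,j-1)$ is false: for instance $S(3,2)=3-2=1$ while $S(2,1)=-2+1=-1$. Moreover the proposed telescoping down to $S(n-j,0)$ fails at the base: with the standard convention $\binom{m-1}{-1}=0$ one gets $S(k,0)=0\neq1$, so $j=0$ is not a usable terminal case. To salvage your route you must keep the sign-alternating recursion and stop the descent at $j=1$, where $S(k,1)=\sum_{m=1}^k(-1)^m\binom{k}{m}=(1-1)^k-1=-1$ is a direct computation; then $S(n,j)=(-1)^{j-1}S(n-j+1,1)=(-1)^j$. The two abandoned generating-function sketches contribute nothing and should be deleted.

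For comparison, the paper's (sketched) proof is a single induction on $n$ with $j$ fixed, which is cleaner than the double descent in $(n,j)$: the base case $n=j$ is the single term $(-1)^j\binom{j-1}{j-1}\binom{j}{j}=(-1)^j$, and for $n>j$ the increment $S(n,j)-S(n-1,j)=\sum_{m=j}^n(-1)^m\binom{m-1}{j-1}\binom{n-1}{m-1}$ vanishes, since $\binom{n-1}{m-1}\binom{m-1}{j-1}=\binom{n-1}{j-1}\binom{n-j}{m-j}$ turns it into $(-1)^j\binom{n-1}{j-1}(1-1)^{n-j}=0$. That argument needs no separate $j=1$ case and none of the sign bookkeeping that derailed your write-up; I would recommend adopting it, or else completing your Pascal route exactly as indicated above.
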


\begin{proof}
This can be shown by induction on $n$, with $j$ fixed. 
\end{proof}

In $S_n$, we put $\sigma_{0,n}=(1\ 2\ \dots\ n)$ as before (without assuming $n\mid N$). 
When $I\subseteq\{1,2,\dots, n\}$, let $Z_I^{(n)}$ denote the set of $n$-cycles $\omega\in S_n\setminus\{\sigma_{0,n}^{-1}\}$ such that $\omega\sigma_{0,n}(a)=a$ for all $a\in I$. 
For each non-negative integer $m\leq n$, we denote by $\mathcal F_m^{(n)}$ the family of subsets of $\{1,2,\dots,n\}$ with cardinality $m$, and by $Y_m^{(n)}$ the union of $Z_I^{(n)}$ for all $I\in\mathcal F_m^{(n)}$. 

\begin{Lemma}
For any $j\in\mathbb N$ with $j\leq n$, 
$$\left|Y_j^{(n)}\right|=\sum_{m=0}^{n-j-1}(-1)^m\binom{j+m-1}{j-1}\sum_{I\in\mathcal F_{j+m}^{(n)}}\left|Z_I^{(n)}\right|.$$
\end{Lemma}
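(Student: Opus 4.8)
The plan is to obtain $\left|Y_j^{(n)}\right|$ from the quantities $\left|Z_I^{(n)}\right|$ by an inclusion–exclusion argument, keeping careful track of how many of the sets $I$ of a given cardinality contain a fixed $j$-subset. First I would fix $j$ and, for each $n$-cycle $\omega\in S_n\setminus\{\sigma_{0,n}^{-1}\}$, consider the set $\mathrm{Fix}(\omega)=\{a\in\{1,\dots,n\}\mid\omega\sigma_{0,n}(a)=a\}$ of fixed points of $\omega\sigma_{0,n}$. Then $\omega\in Y_j^{(n)}$ precisely when $\lvert\mathrm{Fix}(\omega)\rvert\geq j$, and $\omega\in Z_I^{(n)}$ precisely when $I\subseteq\mathrm{Fix}(\omega)$. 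Grouping the $\omega$'s by the value $t=\lvert\mathrm{Fix}(\omega)\rvert$, if we let $c_t$ denote the number of $\omega\in S_n\setminus\{\sigma_{0,n}^{-1}\}$ with $\lvert\mathrm{Fix}(\omega)\rvert=t$, then $\sum_{I\in\mathcal F_{k}^{(n)}}\left|Z_I^{(n)}\right|=\sum_{t\geq k}\binom{t}{k}c_t$, since each such $\omega$ is counted once for every $k$-subset of its fixed-point set. Likewise $\left|Y_j^{(n)}\right|=\sum_{t\geq j}c_t$. Note also that $c_t=0$ for $t=n-1$ and (when $n\geq 2$) for $t=n-2$, which is why the outer sum on the right-hand side may be truncated at $m=n-j-1$.

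The core of the argument is then a purely combinatorial identity: I must show
$$\sum_{t\geq j}c_t=\sum_{m=0}^{n-j-1}(-1)^m\binom{j+m-1}{j-1}\sum_{t\geq j+m}\binom{t}{j+m}c_t.$$
Interchanging the order of summation on the right, the coefficient of $c_t$ (for a fixed $t\geq j$) becomes $\sum_{k=j}^{t}(-1)^{k-j}\binom{k-1}{j-1}\binom{t}{k}$, where $k=j+m$; here the upper limit $n-j-1$ in $m$ is harmless since $\binom{t}{k}=0$ once $k>t$ and $c_t=0$ for $t>n-2$. Substituting $k\mapsto k$ and comparing with Lemma~8 applied with $n$ replaced by $t$, we get $\sum_{k=j}^{t}(-1)^{k}\binom{k-1}{j-1}\binom{t}{k}=(-1)^j$, so that $\sum_{k=j}^{t}(-1)^{k-j}\binom{k-1}{j-1}\binom{t}{k}=1$ for every $t\geq j$. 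Hence the coefficient of each $c_t$ on the right-hand side is exactly $1$, which matches the left-hand side and proves the identity.

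The main obstacle is bookkeeping rather than depth: I need to be precise about the range of the summation index and about the role of the excluded cycle $\sigma_{0,n}^{-1}$ and of the vanishing coefficients $c_{n-1}=c_{n-2}=0$, so that truncating the outer sum at $m=n-j-1$ (rather than at $m=n-j$ or beyond) is legitimate and no term is lost. In particular I should check that for $t=j$ the identity from Lemma~8 still applies (it is the case $n=j$ there, giving the trivial sum $(-1)^j\binom{j-1}{j-1}=(-1)^j$), and that the edge cases $j=n-1$ and $j=n$ are consistent with the stated formula. Once these endpoint checks are in place, the double-counting computation above is routine and the lemma follows.
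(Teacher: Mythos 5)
Your argument is correct and essentially reproduces the paper's proof: both interchange the double sum over pairs $(I,\omega)$ — the paper sums directly over $\omega\in Y_j^{(n)}$ using $|J_\omega|$ where you group into the counts $c_t$ — and both finish with the binomial identity $\sum_{k=j}^{t}(-1)^{k}\binom{k-1}{j-1}\binom{t}{k}=(-1)^{j}$, which in the paper is Lemma~7 (not Lemma~8, which is the statement being proved). The endpoint bookkeeping you flag is handled there simply by noting that $j\leq|J_\omega|\leq n-1$ for every $n$-cycle $\omega\neq\sigma_{0,n}^{-1}$, which is exactly your truncation argument.
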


\begin{proof}
Since $Z_{\{1,2,\dots,n\}}^{(n)}=\emptyset$, we may assume $j<n$. 
For each $\omega\in Y_j^{(n)}$, let $J_\omega$ denote the set of positive integers $a\leq n$ with $\omega\sigma_{0,n}(a)=a$. 
Noting that $\omega\not=\sigma_{0,n}^{-1}$, we then have $j\leq|J_\omega|\leq n-1$, i.e., $0\leq|J_\omega|-j\leq n-j-1$. 
When any non-negative integer $m\leq n-j-1$ and any $\omega\in Y_j^{(n)}$ are given, the number of $I\in\mathcal F_{j+m}^{(n)}$ satisfying $Z_I^{(n)}\ni\omega$, i.e., $J_\omega\supseteq I$ is none other than $\binom{|J_\omega|}{j+m}$, which equals $0$ in the case $m>|J_\omega|-j$. 
Therefore 
\begin{align*}
& \sum_{m=0}^{n-j-1}(-1)^m\binom{j+m-1}{j-1}\sum_{I\in\mathcal F_{j+m}^{(n)}}\left|Z_I^{(n)}\right|
= \sum_{m=0}^{n-j-1}(-1)^m\binom{j+m-1}{j-1}\sum_{I\in\mathcal F_{j+m}^{(n)}}\sum_{\ \omega\in Z_I^{(n)}}1\\
& =\sum_{m=0}^{n-j-1}(-1)^m\binom{j+m-1}{j-1}\sum_{\omega\in Y_j^{(n)}}\binom{|J_\omega|}{j+m} =\sum_{\omega\in Y_j^{(n)}}\sum_{m=0}^{|J_\omega|-j}(-1)^m\binom{j+m-1}{j-1}\binom{|J_\omega|}{j+m},
\end{align*}
and here, by Lemma~7, 
$$\sum_{m=0}^{|J_\omega|-j}(-1)^m\binom{j+m-1}{j-1}\binom{|J_\omega|}{j+m}=\sum_{m'=j}^{|J_\omega|}(-1)^{m'-j}\binom{m'-1}{j-1}\binom{|J_\omega|}{m'}=1.$$
Thus the present lemma is proved. 
\end{proof}

\begin{Lemma}
Let $I\subseteq\{1,2, \dots, n\}$ and $|I|<n$. 
Then 
$$\left|Z_I^{(n)}\right|=(n-|I|-1)!-1.$$ 
\end{Lemma}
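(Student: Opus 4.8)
The plan is to reduce the count $|Z_I^{(n)}|$ to a count of cycles on a smaller ground set by exploiting the rigidity forced by the condition $\omega\sigma_{0,n}(a)=a$ for $a\in I$. First I would observe that, writing $\tau=\omega\sigma_{0,n}$, the hypothesis says $\tau$ fixes every point of $I$ and $\omega=\tau\sigma_{0,n}^{-1}$ must be an $n$-cycle. Since $\sigma_{0,n}^{-1}=(n\ n-1\ \dots\ 2\ 1)$ is a single $n$-cycle, the equation $\omega=\tau\sigma_{0,n}^{-1}$ with $\tau$ fixing $I$ pointwise is exactly the setup of multiplying a fixed $n$-cycle by a permutation supported off $I$; the classical fact here is that multiplying the $n$-cycle $\sigma_{0,n}^{-1}$ by a transposition either splits it into two cycles or, for transpositions of adjacent elements in the cyclic order, does something degenerate. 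I would make this precise by induction on $n-|I|$, which is the natural parameter since the answer $(n-|I|-1)!-1$ depends only on that difference.

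The base cases anchor the induction. If $|I|=n-1$, then $\tau$ fixes $n-1$ of the $n$ points, hence $\tau$ is the identity, hence $\omega=\sigma_{0,n}^{-1}$; but $Z_I^{(n)}$ excludes $\sigma_{0,n}^{-1}$ by definition, so $|Z_I^{(n)}|=0=(n-(n-1)-1)!-1=0!-1$, which checks out. More useful as a true base case is $|I|=n-2$: here $\tau$ fixes $n-2$ points and permutes the remaining two, so $\tau$ is either the identity (giving $\omega=\sigma_{0,n}^{-1}$, excluded) or the transposition of those two points. One then checks directly that $\omega=(\text{that transposition})\cdot\sigma_{0,n}^{-1}$ is an $n$-cycle if and only if the two non-fixed points are \emph{not} cyclically adjacent under $\sigma_{0,n}$ — but since they are the only two points outside $I$ and $n\ge 3$ here, wait: actually when $n-|I|=2$ the two free points could be adjacent or not, and the formula predicts $(2-1)!-1=0$. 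So in fact the transposition never yields an $n$-cycle in this case; I would verify this small computation honestly. This suggests the cleanest route is a direct bijective/recursive argument rather than leaning on base cases, so let me restructure.

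The main approach I would actually carry out: set $k=n-|I|$ and argue that the $n$-cycles $\omega$ with $\omega\sigma_{0,n}$ fixing $I$ are in bijection with the $k$-cycles $\omega'$ on the $k$-element set $\{1,\dots,n\}\setminus I$ such that $\omega'\sigma'_{0}$ is \emph{not} the identity and not itself short a cycle — more precisely, I expect $Z_I^{(n)}$ to biject with the set of all $k$-cycles on a $k$-set except the one analogue of $\sigma_{0,k}^{-1}$, giving $(k-1)!-1$. The bijection should come from contracting each maximal ``$\sigma_{0,n}$-run'' inside $I$: since $\omega\sigma_{0,n}$ fixes $I$, on the complement of $I$ the permutation $\omega$ must route through the points of $I$ in a forced way determined entirely by $\sigma_{0,n}$, so $\omega$ is determined by the induced $k$-cycle on $\{1,\dots,n\}\setminus I$ together with a consistency condition. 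The one excluded cycle $\sigma_{0,n}^{-1}$ corresponds to the excluded analogue on the $k$-set, accounting for the ``$-1$''.

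The hard part will be formalizing this contraction cleanly — precisely identifying which $k$-cycle on the complement is ``forced'' by a given admissible $\omega$, and proving the correspondence is a genuine bijection (injectivity and surjectivity), in particular checking that the only configuration producing a non-$n$-cycle $\omega$ or the excluded $\sigma_{0,n}^{-1}$ on the $n$-set matches up exactly with the single excluded cycle on the $k$-set. An alternative, possibly safer, route avoiding an explicit bijection is pure induction on $n$ with $I$ allowed to vary: remove one point $a\notin I$ at a time, relating $Z_I^{(n)}$ to $Z_{I'}^{(n-1)}$ for an appropriate $I'$ on $\{1,\dots,n\}\setminus\{a\}$ via the standard "delete a point from an $n$-cycle" map, and tracking how the condition $\omega\sigma_{0,n}(I)=I$ transforms. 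Either way, the combinatorial core is the elementary identity that an $n$-cycle times $\sigma_{0,n}^{-1}$, constrained to fix a set, leaves exactly $(n-|I|-1)!-1$ choices; I would present whichever of the two arguments turns out shorter, with the induction fallback as the robust default since it only needs the classical transposition-factorization behavior of cycles that is already implicit in the paper's use of Zagier's formula.
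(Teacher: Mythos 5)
Your main route---contracting each maximal $\sigma_{0,n}$-run of $I$ and bijecting the admissible $n$-cycles with the $(n-|I|)$-cycles on the complement $\bar I$, with the excluded $\sigma_{0,n}^{-1}$ accounting for the $-1$---is essentially the paper's own proof; the paper just adjoins $\sigma_{0,n}^{-1}$ back in first, so the contraction becomes a bijection onto \emph{all} $(n-|I|)$-cycles on $\bar I$, of which there are $(n-|I|-1)!$. The verification you deferred is exactly what the paper supplies: writing $I$ as disjoint runs $I_u$ with initial points $a_u$, the hypothesis forces $\omega$ on $\sigma_{0,n}(I)$, one checks $\omega(a_u)\notin I$ (because $\sigma_{0,n}^{-1}(a_u)\notin I$), and the contracted permutation is then a single cycle on $\bar I$ from which $\omega$ is uniquely recovered.
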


\begin{proof}
The lemma certainly holds in the case $I=\emptyset$, since $Z_\emptyset^{(n)}$ is the set of all $n$-cycles in $S_n\setminus\{\sigma_{0,n}^{-1}\}$. 
Let us consider the case $I\not=\emptyset$ from now on. 
We put $Z_0=Z_I^{(n)}\cup\{\sigma_{0,n}^{-1}\}$ to prove $|Z_0|=(n-|I|-1)!$. 
As is easily seen, there exist a positive integer $m$ and $m$ non-empty subsets $I_1$, \dots, $I_m$ of $I$ such that $I$ is the disjoint union of $I_1$, \dots, $I_m$ and that, for each positive integer $u\leq m$, a unique $a_u\in I_u$ satisfies  
$$I_u=\left\{a_u,\sigma_{0,n}(a_u),\dots,\sigma_{0,n}^{|I_u|-1}(a_u)\right\},\ \sigma_{0,n}^{-1}(a_u)\not\in I,\ \sigma_{0,n}^{|I_u|}(a_u)\not\in I.$$
Hence, by the definition of $Z_I^{(n)}$, $Z_0$ is the set of $n$-cycles $\omega$ in $S_n$ such that, for all positive integers $u\leq m$, 
$$\omega\left(\sigma_{0,n}^{|I_u|}(a_u)\right)=\sigma_{0,n}^{|I_u|-1}(a_u),\ \omega\left(\sigma_{0,n}^{|I_u|-1}(a_u)\right)=\sigma_{0,n}^{|I_u|-2}(a_u),\ \dots,\ \omega(\sigma_{0,n}(a_u))=a_u.$$  
Since $\{\sigma_{0,n}^{-1}(a_1),\dots,\sigma_{0,n}^{-1}(a_m)\}\cap I=\emptyset$, all $\omega\in Z_0$ satisfy $\{\omega(a_1),\dots,\omega(a_m)\}\cap I=\emptyset$.   
We also note that $\sigma_{0,n}^{|I_1|}(a_1)$, \dots, $\sigma_{0,n}^{|I_m|}(a_m)$ are distinct. 

Now let $\bar I=\{1,2,\dots,n\}\setminus I$. 
We denote by $Z'$ the set of all $(n-|I|)$-cycles in the symmetric group on $\bar I$. 
If $\omega\in Z_0$, let us define a permutation $\bar{\omega}$ on $\bar I$ by the following rule: 
\begin{align*}
& \bar{\omega}\left(\sigma_{0,n}^{|I_u|}(a_u)\right)=\omega(a_u)\quad\text{for}\ u\in\{1,2,\dots,m\},\\  
& \bar{\omega}(a')=\omega(a')\qquad\qquad\ \text{for}\ a'\in\bar I\setminus\left\{\sigma_{0,n}^{|I_1|}(a_1),\dots,\sigma_{0,n}^{|I_m|}(a_m)\right\}.
\end{align*} 
We then find without difficulty that $\bar{\omega}$ belongs to $Z'$ and that the map $\omega\mapsto\bar{\omega}$ of $Z_0$ into $Z'$ is  a bijection $Z_0\rightarrow Z'$. 
Hence $|Z_0|=|Z'|=(n-|I|-1)!$.
\end{proof}

\begin{Lemma}
For any $j\in\mathbb N\cup\{0\}$ with $j<n$, the number of $n$-cycles $\omega$ in $S_n$ such that $\omega\sigma_{0,n}$ fixes exactly $j$ elements of $\{1,2,\dots,n\}$ is equal to $\varSigma_j^{(n)}-\varSigma_{j+1}^{(n)}$. 
\end{Lemma}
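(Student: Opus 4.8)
The plan is to combine Lemma~9 and Lemma~10 via an inclusion–exclusion of the sort that underlies the standard "exactly $j$" from "at least $j$" passage. First I would let $N_j$ denote the number of $n$-cycles $\omega$ in $S_n$ such that $\omega\sigma_{0,n}$ fixes \emph{exactly} $j$ elements of $\{1,2,\dots,n\}$, and let $M_j = \sum_{I\in\mathcal F_j^{(n)}}|Z_I^{(n)}|$ be the number of pairs $(\omega,I)$ with $\omega\in S_n$ an $n$-cycle, $\omega\ne\sigma_{0,n}^{-1}$, $|I|=j$, and $I$ contained in the fixed-point set of $\omega\sigma_{0,n}$. Since $\sigma_{0,n}^{-1}$ is the unique $n$-cycle with $\omega\sigma_{0,n}=\mathrm{id}$ fixing all $n$ points, for $j<n$ the $n$-cycles counted in $M_j$ are exactly those whose fixed-point set $J_\omega$ of $\omega\sigma_{0,n}$ has size $<n$, and each such $\omega$ is counted $\binom{|J_\omega|}{j}$ times; hence $M_j = \sum_{i\ge j}\binom{i}{j}N_i$ (the term $i=n$ contributes nothing for $j<n$).

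Next I would invert this relation. The standard binomial inversion gives $N_j = \sum_{i\ge j}(-1)^{i-j}\binom{i}{j}M_i$, but I want instead to land directly on the quantity $\varSigma_j^{(n)}$. By Lemma~10, $M_i = \sum_{I\in\mathcal F_i^{(n)}}((n-i-1)!-1) = \binom{n}{i}((n-i-1)!-1)$ for $i<n$, and $M_n=0$. Comparing with the definition~(1) of $\varSigma_j^{(n)}$, I expect $\varSigma_j^{(n)}$ to be precisely $\sum_{i=j}^{n-1}\binom{i}{j}N_i$, i.e.\ the count of $n$-cycles $\omega\ne\sigma_{0,n}^{-1}$ weighted by $\binom{|J_\omega|}{j}$ — equivalently $\varSigma_j^{(n)} = |Y_j^{(n)}|$ re-expressed, where $Y_j^{(n)}$ from Lemma~9 is the set of $\omega$ with $|J_\omega|\ge j$ (and $\omega\ne\sigma_{0,n}^{-1}$). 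Indeed Lemma~9 combined with Lemma~10 yields a closed form for $|Y_j^{(n)}|$; the real content of this step is to check that this closed form equals the expression~(1) defining $\varSigma_j^{(n)}$ (and that $\varSigma_0^{(n)}=(n-1)!-1=|Z_\emptyset^{(n)}|=|Y_0^{(n)}|$ matches~(2)). Granting that identification, $\varSigma_j^{(n)}$ counts $\omega\in Y_j^{(n)}$ with multiplicity one — no, more precisely $|Y_j^{(n)}|=\varSigma_j^{(n)}$ — so that $\varSigma_j^{(n)}-\varSigma_{j+1}^{(n)} = |Y_j^{(n)}\setminus Y_{j+1}^{(n)}|$ is the number of $\omega$ (automatically $\ne\sigma_{0,n}^{-1}$ since $j<n$) with $|J_\omega|\ge j$ but not $\ge j+1$, i.e.\ with $\omega\sigma_{0,n}$ fixing exactly $j$ elements, which is the assertion.

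So concretely the steps are: (i) observe $Y_{j+1}^{(n)}\subseteq Y_j^{(n)}$ and that $\omega\in Y_j^{(n)}\setminus Y_{j+1}^{(n)}$ iff $\omega$ is an $n$-cycle with $\omega\sigma_{0,n}$ fixing exactly $j$ points (using $j<n$ to exclude $\sigma_{0,n}^{-1}$ automatically); (ii) apply Lemma~9 together with Lemma~10 and $|\mathcal F_{j+m}^{(n)}|=\binom{n}{j+m}$ to get
$$|Y_j^{(n)}| = \sum_{m=0}^{n-j-1}(-1)^m\binom{j+m-1}{j-1}\binom{n}{j+m}\bigl((n-j-m-1)!-1\bigr);$$
(iii) massage this sum — separating the $(n-j-m-1)!$ part from the $-1$ part, the latter summing by Lemma~7 (in the form $\sum_{m=0}^{n-j-1}(-1)^m\binom{j+m-1}{j-1}\binom{n}{j+m} = (-1)^{n-j}\binom{n-1}{j-1}$, which is Lemma~7 shifted and with the $m=n-j$ term $(-1)^n\binom{n-1}{j-1}$ removed) and reindexing the factorial part with $m'=j+m$ and $\binom{m'-1}{j-1}\binom{n}{m'} = \binom{n}{j}\binom{n-j}{m'-j}$ — until it matches~(1); and (iv) check the edge cases $j=0$ (matching~(2)) and $j=n-1$ directly. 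I expect step (iii), the purely algebraic reconciliation of the alternating sum with the definition~(1) of $\varSigma_j^{(n)}$, to be the main obstacle: it requires care with the index ranges and with isolating exactly the term $(-1)^{n-j}\binom{n-1}{j-1}$ and the trailing $-1$, but it is a routine finite-sum manipulation with no conceptual difficulty.
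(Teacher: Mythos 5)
Your proposal is correct and follows essentially the same route as the paper: it reduces the claim to showing $\left|Y_j^{(n)}\right|=\varSigma_j^{(n)}$ (via the lemma expressing $\left|Y_j^{(n)}\right|$ through the $\left|Z_I^{(n)}\right|$, the formula $\left|Z_I^{(n)}\right|=(n-|I|-1)!-1$, and the binomial identity of Lemma~7), treats $j=0$ separately via (2), and then takes the difference $\left|Y_j^{(n)}\right|-\left|Y_{j+1}^{(n)}\right|$, exactly as the paper does. Only two small slips in your sketched step (iii): the alternating sum equals $1-(-1)^{n-j}\binom{n-1}{j-1}$, not $(-1)^{n-j}\binom{n-1}{j-1}$ (so its negative supplies the terms $(-1)^{n-j}\binom{n-1}{j-1}-1$ in (1)), and the quoted identity should read $\binom{m'-1}{j-1}\binom{n}{m'}=\frac{j}{m'}\binom{n}{j}\binom{n-j}{m'-j}$; with these corrections the routine reconciliation with (1) goes through as you planned.
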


\begin{proof}
As $\varSigma_n^{(n)}=0$, it suffices to prove that $\varSigma_j^{(n)}$ is the number of $n$-cycles $\omega\in S_n\setminus\{\sigma_{0,n}^{-1}\}$ for which $\omega\sigma_{0.n}$ fixes at least $j$ elements of $\{1,2,\dots,n\}: |Y_j^{(n)}|=\varSigma_j^{(n)}$.
In the case $j=0$, this immediately follows from Lemma~9 and the definition (2).  
We next assume $j>0$. 
In view of Lemmas~8~and~9, we have 
\begin{align*}
& \left|Y_j^{(n)}\right|=\sum_{m=0}^{n-j-1}(-1)^m\binom{j+m-1}{j-1}\binom{n}{j+m}((n-j-m-1)!-1)\\
& =\sum_{m=0}^{n-j-1}\frac{(-1)^{m}n!}{(j-1)!\, m!\, (j+m)(n-j-m)}- \sum_{m'=j}^{n-1}(-1)^{m'-j}\binom{m'-1}{j-1}\binom{n}{m'}.  
\end{align*}
Therefore, together with the definition (1), Lemma~7 yields 
$$\left|Y_j^{(n)}\right|=\varSigma_j^{(n)}+1-(-1)^j\sum_{m'=j}^n(-1)^{m'}\binom{m'-1}{j-1}\binom{n}{m'}=\varSigma_j^{(n)}.$$ 
\end{proof}


Now let us return to the situation of Lemma~3, in which $n\mid N$. 
Fixing any $b\in E^\times$ and letting $\boldsymbol u=(u_2,\dots,u_n)$ range over all $(n-1)$-tuples in $E^{n-1}$, we define
$$U=\left\{\boldsymbol u = (u_2, \dots, u_n) \in E^{n-1} \middle| \sum_{\stackrel{1\leq j \leq \nu}{N_j=1}}d_j=h\right\}.$$
We put $n'=n-hn/N$, so that $n'\in\mathbb N\cup\{0\}$ in the case $N/n\mid h$. 

\begin{Lemma}
Let $t$ be the number of $j\in\{1,2,\dots,\nu\}$ with $s_j=1$. 
Then 
$$|U|=
\begin{cases}
0 & \text{if}\ N/n\nmid h\ \text {or if}\ t<hn/N,\\[0.5cm]
\displaystyle{\binom{t}{hn/N}\frac{N^{n-t-1}}{n^{n-t-1}}\left(\frac{N}{n}-1\right)^{t-hn/N}} & \text{if}\ N/n\mid h,\ hn/N\leq t<n,\\[0.5cm]
\displaystyle{\binom{n}{hn/N}\left(\frac{n}{N}\left(\left(\frac{N}{n}-1\right)^{n'}-(-1)^{n'}\right)+\delta_b\right)} & \text{if}\ N/n\mid h,\ t=n;
\end{cases}
$$
here $\delta_b$ denotes $(-1)^{n'}$ or $0$ according to whether $b=N/n-1$ or not.  
\end{Lemma}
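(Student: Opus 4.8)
\textit{Proof plan.}
Write $q=N/n$ and $h'=hn/N$. The starting point is that $N_j=s_jN/(d_jn)=1$ forces $s_j\le 1$ (since $d_j\mid q$), so $N_j=1$ is equivalent to $s_j=1$ together with $d_j=q$; hence $\sum_{N_j=1}d_j$ is $q$ times the number of singleton indices $j$ with $d_j=q$, and that number is at most $t$. This already gives $U=\emptyset$ unless $q\mid h$ and $h'\le t$, which is the first case of the statement. From now on assume $q\mid h$ and $h'\le t$; we must count the $\boldsymbol u\in E^{n-1}$ for which exactly $h'$ of the $t$ singleton indices $j$ satisfy $d_j=q$.

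I would then replace $\boldsymbol u$ by the free parameters $\bigl(w_j^{(k)}\bigr)_{(j,k)\neq(1,s_1)}\in E^{n-1}$ introduced in the proof of Lemma~4, which run bijectively over $E^{n-1}$, and use that $d_j=\gcd(\widetilde w_j,q)$, so $d_j=q\iff\widetilde w_j=0$. For a singleton index $j\neq 1$ one has $s_j=1$ and $b_j=0$, whence $\widetilde w_j\equiv w_j^{(1)}\pmod q$, and $d_j=q$ is the condition $w_j^{(1)}\equiv0\pmod q$ on a single free parameter. If moreover $s_1=1$ --- the only extra case, occurring exactly when $1$ is itself one of the $t$ singleton indices --- then from $u_1=0$, $\sum_{(j,k)}\delta_j^{(k)}=1$, and the cyclic telescoping $\sum_j(u_j-u_{j-1})=0$ one finds $\widetilde w_1\equiv b+1-\sum w_j^{(k)}\pmod q$, the sum running over all $n-1$ free parameters (the $\delta$-shifts cancel), so that $d_1=q$ becomes the single linear relation $\sum w_j^{(k)}\equiv b+1\pmod q$.

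The remainder is a counting argument, split according to whether $s_1>1$ or $s_1=1$. If $s_1>1$, all $t$ singleton conditions are ``a prescribed free parameter vanishes'' and they involve $t$ distinct parameters, so leaving the other $n-1-t$ parameters unconstrained gives $|U|=\binom{t}{h'}(q-1)^{t-h'}q^{n-1-t}$, the value claimed for $h'\le t<n$. If $s_1=1$, I would condition on whether the linear relation for $j=1$ holds and on which of the remaining $t-1$ coordinate conditions hold; once those coordinates are fixed, the residual count over the remaining free parameters is an elementary count of tuples of residues modulo $q$ --- some entries constrained to be nonzero --- with a prescribed, respectively forbidden, sum. When $t<n$ at least one free parameter is left entirely unconstrained, so the residual count does not depend on the target sum and the two contributions combine, via $\binom{t-1}{h'-1}+\binom{t-1}{h'}=\binom{t}{h'}$, into the same expression $\binom{t}{h'}(q-1)^{t-h'}q^{n-1-t}$. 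When $t=n$ --- equivalently $\omega=\sigma_{0,n}^{-1}$, the only possibility once $s_1=1$ and every cycle is a singleton --- no free parameter remains unconstrained; here the residual counts are the quantities $g(k,c):=|\{(v_1,\dots,v_k)\in\{1,\dots,q-1\}^{k}\ :\ v_1+\dots+v_k\equiv c\pmod q\}|$, which satisfy $g(k,c)=(q-1)^{k-1}-g(k-1,c)$ (condition on the last entry) and, by inclusion--exclusion, $g(k,c)=\frac1q\bigl((q-1)^{k}-(-1)^{k}\bigr)$ if $q\nmid c$ and $\frac1q\bigl((q-1)^{k}-(-1)^{k}\bigr)+(-1)^{k}$ if $q\mid c$. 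Using the recurrence and Pascal's identity the two contributions collapse to $\binom{n}{h'}\,g(n',b+1)$ with $n'=n-h'$, and the closed form for $g$ (noting $q\mid b+1\iff b=q-1$) turns this into $\binom{n}{h'}\bigl(\frac nN((q-1)^{n'}-(-1)^{n'})+\delta_b\bigr)$, as asserted.

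The step I expect to be the main obstacle is the one in the second paragraph: reducing $d_1=q$ to a genuine linear relation among the free parameters, with the $\delta_j^{(k)}$-shifts and the cyclic telescoping accounted for correctly. After that, checking that $s_1>1$ and $s_1=1$ yield the same answer when $t<n$ while the boundary term $\delta_b$ emerges precisely at $t=n$ is bookkeeping, and the evaluation of $g(k,c)$ together with the collapse of the binomial sums is routine.
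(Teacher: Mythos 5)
Your plan is correct: the equivalence $N_j=1\iff(s_j=1\ \text{and}\ d_j=N/n)$, the resulting vanishing criterion, the count $\binom{t}{hn/N}\left(\frac{N}{n}-1\right)^{t-hn/N}\left(\frac{N}{n}\right)^{n-1-t}$ for $hn/N\leq t<n$, and the boundary computation at $t=n$ (including the edge cases $n'=0$ and $n=1$, and the identification $b=N/n-1\iff N/n\mid b+1$) all check out, and the reduction of $d_1=N/n$ to a linear relation among the free parameters, which you flagged as the delicate step, is exactly right. The route is close to the paper's but the bookkeeping differs in two places. The paper works with the difference coordinates $u_k^*\equiv u_k-u_{k-1}\pmod{N/n}$ and, for $t<n$, drops one coordinate $u_{k'}^*$ with $k'$ chosen outside $\{x_j(1)\,;\ j\in I_0\}$; this makes every singleton condition, including the one for $j=1$ where the shift $-\delta_1^{(1)}-b$ is simply absorbed into the congruence, a condition on its own free coordinate, so no case split on $s_1$ and no Pascal recombination are needed there. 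For $t=n$ the paper fixes a target set $I$, counts $\boldsymbol u$ with $I^*\supseteq I$ by the same coordinate trick, and then performs inclusion--exclusion over the sets $U'_j$, using $\sum_k u_k^*\equiv0$ and $\sum_j\left(-\delta_j^{(1)}-b_j\right)=-1-b$ to produce the $\delta_b$ term. You instead reuse the $w_j^{(k)}$-parametrization from the proof of Lemma~4 (legitimate, since that correspondence is a bijection of $E^{n-1}$ for fixed $\omega$), at the cost that when $s_1=1$ the condition $d_1=N/n$ becomes a sum condition, forcing the split into two subcases recombined via $\binom{t-1}{h'-1}+\binom{t-1}{h'}=\binom{t}{h'}$, and at $t=n$ you replace the inclusion--exclusion by the closed-form count $g(k,c)$ of nonzero residue tuples with prescribed sum. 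The two arguments are of comparable length: the paper's choice of coordinates is slightly cleaner for $t<n$, while your $g(k,c)$ identity packages the $t=n$ inclusion--exclusion into a standard lemma.
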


\begin{proof}
We denote by $I_0$ the set of $j\in\{1,2,\dots,\nu\}$ with $s_j=1$, whence $t=|I_0|$. 
For each $j\in\{1,2,\dots,\nu\}$, as follows from the definition of $N_j$, the codition $N_j=1$ is equivalent to the condition that $s_j=1$ and that $d_j=N/n$. 
Hence, by Lemma~3, 
$$h=\left|\left\{j\in I_0\,;\ d_j=N/n\right\}\right|\frac{N}{n}\leq\frac{tN}{n}$$
in the case $\boldsymbol u\in U$. 
This shows that $U=\emptyset$ if $N/n\nmid h$ or $t<hn/N$. 

Suppose next that $N/n\mid h$ and $t\geq hn/N$. 
For each $k\in\{1,2,\dots,n\}$, let $u_k^*$ denote the integer in $E$ such that $u_k^*\equiv u_k-u_{k-1}\pmod{N/n}$. 
In particular, $u_1^*\equiv-u_n\pmod{N/n}$. 
Set 
$$I^*=\left\{j\in I_0\,;\ \ u_{x_j(1)}^*\equiv-\delta_j^{(1)}-b_j\pmod{N/n}\right\},$$
and let $\mathcal F$ be the family of all subsets of $I_0$ with cardinality $hn/N$. 
Obviously 
$$|\mathcal F|=\binom{t}{hn/N}=\binom{t}{t-hn/N}.$$
Given any $j\in I_0$, we see from the definition of $d_j$ that $d_j=N/n$ if and only if $u_{x_j(1)}^*\equiv-\delta_j^{(1)}-b_j\pmod{N/n}$. 
Therefore, by Lemma~3,  the three conditions 
$$\boldsymbol u\in U,\quad I^*\in\mathcal F,\quad\left|\left\{j\in I_0\,;\ \ u_{x_j(1)}^*\not\equiv-\delta_j^{(1)}-b_j\pmod{N/n}\right\}\right|=t-\frac{hn}{N}$$
are equivalent.
On the other hand, when we assume $t<n$ with taking a positive integer $k'\leq n$ outside the set $\{x_j(1)\,;\ j\in I_0\}$ of cardinality $t$, the correspondence $\boldsymbol u\mapsto(u_1^*,\dots,u_{k'-1}^*,u_{k'+1}^*,\dots,u_n^*)$ defines a permutation on $E^{n-1}$. 
Thus, in the case $t<n$, 
$$|U|=\left(\frac{N}{n}\right)^{n-1-t}|\mathcal F|\left(\frac{N}{n}-1\right)^{t-hn/N}=\binom{t}{hn/N}\frac{N^{n-t-1}}{n^{n-t-1}}\left(\frac{N}{n}-1\right)^{t-hn/N}.$$

We now suppose that $N/n\mid h$ and $t=n$. 
Let $I$ be any set in $\mathcal F$, namely any subset of $I_0=\{1,2,\dots,n\}$ with cardinality $hn/N$. 
Let $U'$ denote the set of $\boldsymbol u\in E^{n-1}$ satisfying $I^*\supseteq I$. 
We put $\bar I=I_0\setminus I=\{1,2,\dots,n\}\setminus I$, and so $n'=|\bar I|$. 
Let us consider the case where $h<N$, i.e., $n'>0$. 
As there exists a positive integer $k$ in $\bar I$ and the correspondence $\boldsymbol u\mapsto(u_1^*,\dots,u_{k-1}^*,u_{k+1}^*,\dots,u_n^*)$ defines a permutation on $E^{n-1}$, it follows that 
$$|U'|=\left(\frac{N}{n}\right)^{n-1-hn/N}=\left(\frac{N}{n}\right)^{n'-1}.$$
For each $j\in\bar I$, let $U'_j$ denote the set of $\boldsymbol u\in U'$ with $u_{x_j(1)}^*\equiv-\delta_j^{(1)}-b_j\pmod{N/n}$. 
Clearly $U'\setminus\bigcup_{j\in\bar I}U'_j$ is the set of all $\boldsymbol u\in E^{n-1}$ satisfying $I^*=I$. 
Let $I'$ vary over the non-empty subsets of $\bar I$. 
Then 
$$\left|U'\setminus\bigcup_{j\in\bar I}U'_j\right|=|U'|+\sum_{I'}(-1)^{|I'|}\left|\bigcap_{j\in I'} U'_j\right|.$$
In the case $I'\not=\bar I$, the permutation on $E^{n-1}$ defined for any $k\in\bar I\setminus I'$ by the correspondence $\boldsymbol u\mapsto(u_1^*,\dots,u_{k-1}^*,u_{k+1}^*,\dots,u_n^*)$ causes us to have 
$$\left|\bigcap_{j\in I'}U'_j\right|=\left(\frac{N}{n}\right)^{n'-1-|I'|}.$$
Furthermore 
\begin{equation}
\sum_{j=1}^n u_{x_j(1)}^*=\sum_{j=1}^n u_j^*\equiv0\pmod{N/n},\quad\sum_{j=1}^n\left(-\delta_j^{(1)}-b_j\right)=-1-b.
\end{equation}
Therefore, when $-1-b\not\equiv0\pmod{N/n}$, i.e., $b\not=N/n-1$, we have $\bigcap_{j\in\bar I} U'_j=\emptyset$, so that 
\begin{align*}
\left|U'\setminus\bigcup_{j\in\bar I}U'_j\right| & =\left(\frac{N}{n}\right)^{n'-1}+\sum_{I'\not=\bar I}(-1)^{|I'|}\left(\frac{N}{n}\right)^{n'-1-|I'|}\\
& =\sum_{k=0}^{n'}\binom{n'}{k}(-1)^k\left(\frac{N}{n}\right)^{n'-1-k}-(-1)^{n'}\left(\frac{N}{n}\right)^{-1}\\
& =\frac{n}{N}\left(\left(\frac{N}{n}-1\right)^{n'}-(-1)^{n'}\right).
\end{align*}
When $b=N/n-1$, it follows for any $j_0\in\bar I$ that 
$$\bigcap_{j\in\bar I} U'_j=\bigcap_{j\in\bar I\setminus\{j_0\}}U'_j\quad\text{or}\quad\bigcap_{j\in\bar I} U'_j=U'_{j_0}=U'$$
according to whether $n'>1$ or $n'=1$; 
hence 
\begin{align*}
\left|U'\setminus\bigcup_{j\in\bar I}U'_j\right| & =\sum_{k=0}^{n'}\binom{n'}{k}(-1)^k\left(\frac{N}{n}\right)^{n'-1-k}-(-1)^{n'}\left(\frac{N}{n}\right)^{-1}-(-1)^{n'-1}\\
& =\frac{n}{N}\left(\left(\frac{N}{n}-1\right)^{n'}-(-1)^{n'}\right)+(-1)^{n'}.
\end{align*}
Thus we find that the number of $\boldsymbol u\in E^{n-1}$ with $I^*=I$ is equal to 
$$\frac{n}{N}\left(\left(\frac{N}{n}-1\right)^{n'}-(-1)^{n'}\right)+\delta_b.$$
This fact yields 
$$|U|=\binom{n}{hn/N}\left(\frac{n}{N}\left(\left(\frac{N}{n}-1\right)^{n'}-(-1)^{n'}\right)+\delta_b\right).$$
Finally, in the case where $h=N$, i.e., $n'=0$, since $\mathcal F$ consists only of $\{1,2,\dots,n\}$ and (7) still holds, we easily obtain $|U|=\delta_b$, the same equality as the above. 
\end{proof}

By means of Lemmas~10~and~11, we can prove Proposition~3 as follows.\\ 

\noindent
{\it Proof of Proposition 3.}
Let $n\in D^*$. 
As in the proof of Proposition~2, let $\omega$ range over all $n$-cycles in $S_n$, $\boldsymbol u$ over all $(n-1)$-tuples in $E^{n-1}$, and $b$ over all integers in $E^\times$. 
Further, as in Lemma~11, let $t$ denote the number of $j\in\{1,2,\dots,\nu\}$ with $s_j=1$. 
We take any integer $m$ satisfying $hn/N\leq m<n$. 
By Lemma~10, the number of $\omega$ with $t=m$ is equal to $\varSigma_m^{(n)}-\varSigma_{m+1}^{(n)}$ because the condition $t=m$ means that $\omega\sigma_{0,n}$ fixes exactly $m$ elements of $\{1,2,\dots,n\}$. 
In addition, $t=n$ if and only if $\omega=\sigma_{0,n}^{\,-1}$. 
Lemma~11 therefore shows that 
\begin{align*}
|V\cap C(\sigma_0^n)|
= & \ \sum_{m=hn/N}^{n-1}\left(\varSigma_m^{(n)}-\varSigma_{m+1}^{(n)}\right)\varphi(N/n)\binom{m}{hn/N}\frac{N^{n-m-1}}{n^{n-m-1}}\left(\frac{N}{n}-1\right)^{m-hn/N}\\
& +\binom{n}{hn/N}\left(\frac{n}{N}\left(\left(\frac{N}{n}-1\right)^{n-hn/N}-(-1)^{n-hn/N}\right)+(-1)^{n-hn/N}\right)\\
& +\left(\varphi(N/n)-1\right)\binom{n}{hn/N}\left(\frac{n}{N}\left(\left(\frac{N}{n}-1\right)^{n-hn/N}-(-1)^{n-hn/N}\right)\right)\\
= & \ \varUpsilon_{N,h,n}.
\end{align*}
\qed\\

Now that Proposition~3 is proved, we proceed to prove Theorems~2~and~4.
Similarly to Lemma~5, the following result holds by Proposition~1. 

\begin{Lemma}
The image $\iota(\mathfrak D_2)$ consists of the equivalence classes of all $(\sigma_0,\tau)$, $\tau\in V$. 
\end{Lemma}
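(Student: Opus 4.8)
The plan is to mimic the proof of Lemma~5 verbatim, transporting the structure through the bijection $\iota$ between equivalence classes of dessins and equivalence classes of permutation representation pairs. Recall that $\mathfrak D_2 = \mathfrak D_2(N,h)$ is the set of equivalence classes of dessins with $N$ edges, $h$ faces of degree $2$ and two vertices. A dessin has exactly two vertices precisely when, in any permutation representation pair $(\sigma,\tau)$ of it, both $\sigma$ and $\tau$ are $N$-cycles; this is the characterization already used in the proof of Lemma~5 via Proposition~1.

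First I would take $\mathfrak c \in \mathfrak D_2$ and a pair $(\sigma,\tau) \in \iota(\mathfrak c)$. By Proposition~1(iii), the number of faces of degree $2 = 2\cdot 1$ in any dessin in $\mathfrak c$ equals the number of $1$-cycles (fixed points) in the cycle decomposition of $\tau\sigma$. Since $\mathfrak c$ has two vertices, $\sigma$ and $\tau$ are $N$-cycles, so there is $\rho \in S_N$ with $\sigma_0 = \rho\sigma\rho^{-1}$. Then $(\sigma_0, \rho\tau\rho^{-1})$ is equivalent to $(\sigma,\tau)$, hence lies in $\iota(\mathfrak c)$, and $(\rho\tau\rho^{-1})\sigma_0 = \rho(\tau\sigma)\rho^{-1}$ is conjugate to $\tau\sigma$, so it too fixes exactly $h$ elements. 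By the definition of $V$, this means $\rho\tau\rho^{-1} \in V$. Thus every class in $\iota(\mathfrak D_2)$ contains a pair of the form $(\sigma_0, \tau)$ with $\tau \in V$.

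Conversely, given $\tau \in V$, I must check that the equivalence class of $(\sigma_0,\tau)$ actually lies in $\iota(\mathfrak D_2)$: since $\sigma_0$ is an $N$-cycle and $\tau \in V \subseteq S_N$ (note $V$ as defined also requires $\tau\sigma_0$ to fix exactly $h$ elements, which by Proposition~1(iii) gives $h$ faces of degree $2$), the corresponding dessin has one white vertex and one black vertex — wait, one must also confirm $\tau$ is an $N$-cycle. Here I would appeal to the convention in force since Lemma~1 and used throughout Section~3–4 that the relevant $\tau$'s of interest are $N$-cycles, or more carefully note that $\mathfrak D_2$ by definition consists of dessins with \emph{two} vertices, forcing $\tau$ to be an $N$-cycle as well; so one should read the set $V$ in conjunction with this two-vertex constraint (exactly as $T$ was implicitly paired with the $N$-cycle condition in Lemma~5). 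With that, $(\sigma_0,\tau)$ is a permutation representation pair of a dessin with $N$ edges, two vertices, and $h$ faces of degree $2$, i.e.\ its class lies in $\iota(\mathfrak D_2)$.

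Combining the two inclusions gives that $\iota(\mathfrak D_2)$ is exactly the set of equivalence classes of pairs $(\sigma_0,\tau)$ with $\tau \in V$, which is the assertion. The only genuine subtlety — and the step I expect to need the most care — is the bookkeeping around whether the symbol $V$ already encodes the $N$-cycle (two-vertex) condition or whether that must be imposed separately; everything else is a direct transcription of the argument for Lemma~5, relying only on Proposition~1, the transitivity/conjugation properties of $\iota$, and the fact that $C(\sigma_0) = \langle\sigma_0\rangle$ is not even needed here (it enters only later in Lemma~6).
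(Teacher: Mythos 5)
Your proof is correct and follows essentially the same route as the paper, which just transports the argument for Lemma~5 through Proposition~1 (conjugate $\sigma$ to $\sigma_0$ and use $\rho\tau\rho^{-1}\sigma_0=\rho\tau\sigma\rho^{-1}$, plus the fact that every pair $(\sigma_0,\tau)$ is a permutation representation pair). Your flagged subtlety is also resolved the right way: although the displayed definition of $V$ omits the $N$-cycle condition, the paper's use of $V$ (e.g.\ counting the $N$-cycles $\rho_{\omega,\boldsymbol u,b}$ in the proof of Proposition~3, in parallel with the definition of $T$) shows it is intended to consist of $N$-cycles $\tau$ with $\tau\sigma_0$ fixing exactly $h$ elements, which is precisely the reading your converse direction requires.
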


\noindent
{\it Proof of Theorem 2.}
Take any $r\in D_N$. 
For each $n\in D_{N/r}$, let $V_n$ denote the set of $\tau\in V$ satisfying $[\langle\sigma_0\rangle:C(\tau)\cap\langle\sigma_0\rangle]=n$, and $\mathfrak Q_n'$ the set of the equivalence classes of all $(\sigma_0,\tau)$, $\tau\in V_n$. 
Then, replacing $T$, $T_n$, $\mathfrak Q_n$ and Lemma~5 by $V$, $V_n$, $\mathfrak Q_n'$ and Lemma~12 respectively in the proof of Theorem~1, we can deduce that $\mathfrak Q_{N/r}'=\iota(\mathfrak D_{2,r})$ and consequently that 
$$\left|\mathfrak D_{2,r}\right|=\frac{r}{N}\sum_{n\in D_{N/r}}\mu(N/(nr))|V\cap C(\sigma_0^n)|.$$
This together with Proposition~3 gives Theorem~2, since Lemma~11 implies that $|V\cap C(\sigma_0^n)|=0$ for all $n\in D_N$ with $N/n\nmid h$. 
\qed\\

\noindent
{\it Proof of Theorem 4.} 
Let $\mathfrak Q_{N/r}'$ be as above. 
As in the proof of Theorem~3, Proposition~1 together with the fact $\mathfrak Q_{N/r}'=\iota(\mathfrak D_{2,r})$ shows that 
$$\iota^{-1}(\widetilde{\epsilon_1}(\iota(\mathfrak D_{2,r})))=\mathfrak D'_{2,r},\quad\iota^{-1}(\widetilde{\epsilon_2}(\iota(\mathfrak D_{2,r})))=\mathfrak D''_{2,r}.$$
Thus Theorem~4 follows from Theorem~2. 
\qed\\

\section{Some consequences}

\ \ \ \ We conclude this paper by deducing some results from Theorems~1~and~2 under remarkable special conditions. 

Let us begin by discussing the fact that 
\begin{equation}
|\mathfrak D_1|=|\mathfrak D_{1,N}|=1\quad\text{if}\ L=N.
\end{equation}  
We suppose $L=N$, so that, for any $n\in D_N$ and any $m\in\{1,2,\dots,n\}$, $\varLambda_m^{(n)}$ is empty or consists only of the partition of $N$ into $n$ parts equal to $N/n$ according to whether $m<n$ or $m=n$. 
We then easily see that 
\begin{align*} 
& \frac{N^{n-1}}{n^{n+1}(n+1)}\sum_{m=1}^nf^{(n)}_{n-m+1}\sum_{\lambda\in\varLambda_m^{(n)}}B_{\lambda}\prod_{p\in P_{N/n}}A_{n,\lambda,p}\\
& =\frac{2N^{n-1}}{n^{n+1}(n+1)}\mathfrak s_1(1,2,\dots,n)\left(\frac{N}{n}\right)^{-n}\prod_{p\in P_{N/n}}\left(1-\frac{1}{p}+\frac{1}{p}\right)=\frac{1}{N}.
\end{align*}
Furthermore, for any $r\in D_N$, $r\sum_{n\in D_{N/r}}\mu(N/(nr))/N$ equals $1$ or $0$ according to whether $r=N$ or $r<N$. 
Hence $|\mathfrak D_1|=|\mathfrak D_{1,N}|=1$ by Theorem~1. \\

\noindent
\begin{Remark}
Theorem~2 for $h=N$ also implies (8), whereas the equivalence class in $\mathfrak P$ consisting of $(\sigma,\sigma^{-1})$ for all $N$-cycles $\sigma$ in $S_N$ corresponds under $\iota$ to the unique equivalence class of dessins with $N$ edges, $N$ faces and two vertices.\\ 
\end{Remark}

Although the Riemann-Hurwitz formula yields the fact that 
$$\mathfrak D_1=\emptyset\quad\text{unless}\ N\equiv L\pmod{2},$$ 
we deduce this from Theorem~1. 
Assume $N\not\equiv L\pmod{2}$. 
We take any $n\in D_N$ and any $\lambda\in\varLambda^{(n)}_m$ with a positive integer $m\leq\min(n,L)$. 
Assume further that $\lambda$ is given by $L=l_1+\cdots+l_m$ with a decreasing sequence $l_1\geq\cdots\geq l_m$ of $m$ integers in $D_{N/n}$. 
If $N/n$ is odd so that $l_1$, \dots, $l_m$ are odd, then $N\equiv n\pmod{2}$, $L\equiv m\pmod{2}$ and consequently $1-(-1)^{n-m+1}=0$. 
If $N/n$ is even, then $i_0(n,\lambda,2)\equiv L\equiv1\pmod{2}$ so that 
$$A_{n,\lambda,2}=2^{i_0(n,\lambda,2)-i(n,\lambda,2)}\left(2^{-i_0(n,\lambda,2)-1}-(-2)^{-i_0(n,\lambda,2)-1}\right)=0.$$ 
Thus, in any case, 
$$f^{(n)}_{n-m+1}B_\lambda\prod_{p\in P_{N/n}}A_{n,\lambda,p}=0.$$
Theorem~1 therefore yields $|\mathfrak D_1|=0$. 

Meanwhile, when $L=1$ and $N$ is odd, $\varLambda_1^{(n)}$ consists only of the trivial partition of $1$ for any $n\in D_N$. 
Further the unique $\lambda\in\varLambda_1^{(n)}$ satisfies $B_\lambda=1$ and $A_{1,\lambda,p}=(1-1/p)^2-1/p^2$ for any $p\in P_{N/n}$. 
Since $f_n^{(n)}=n!$, Theorem~1 then shows that 
\begin{equation}
|\mathfrak D_{1,r}|=2r\sum_{n\in D_{N/r}}\frac{\mu(N/(nr))N^{n-1}(n-1)!}{n^n(n+1)}\prod_{p\in P_{N/n}}\left(1-\frac{2}{p}\right)
\end{equation}
for every $r\in D_N$ and that 
$$|\mathfrak D_1|=\sum_{u\in D_N}\frac{2}{u}\sum_{n\in D_u}\frac{\mu(u/n)N^n(n-1)!}{n^n(n+1)}\prod_{p\in P_{N/n}}\left(1-\frac{2}{p}\right).$$ 
We can similarly treat the cases $L=2$, $L=3$, etc. 
For instance, when $L=2$ and $2\mid N$, Theorem~1 eventually shows that 
\begin{align*} 
|\mathfrak D_{1,r}|= & \,\frac{(3-(-1)^{N/2})r}{4}\sum_{2\,\nmid\,n}\frac{\mu(N/(nr))N^{n-1}(n-1)!}{n^n(n+1)}\prod_{p\in P_{N/n}\setminus\{2\}}\left(1-\frac{2}{p}\right)\\
&+2r\sum_{2\,\mid\,n}\frac{\mu(N/(nr))N^{n-1}(n-1)!}{n^n(n+1)}\left(\sum_{j=1}^n\frac{1}{j}\right)\prod_{p\in P_{N/n}}\left(1-\frac{3}{p}+\frac{3}{p^2}\right)
\end{align*}
for every $r\in D_N$, where $n$ runs through $D_{N/r}$;
\begin{align*} 
|\mathfrak D_1|= & \,\frac{3-(-1)^{N/2}}{8}\sum_{2\,\nmid\,u}\frac{1}{u}\sum_{n}\frac{\mu(u/n)N^n(n-1)!}{n^n(n+1)}\prod_{p\in P_{N/n}\setminus\{2\}}\left(1-\frac{2}{p}\right)\\
& +\sum_{2\,\mid\,u}\frac{2}{u}\sum_{2\,\mid\,n}\frac{\mu(u/n)N^n(n-1)!}{n^n(n+1)}\left(\sum_{j=1}^n\frac{1}{j}\right)\prod_{p\in P_{N/n}}\left(1-\frac{3}{p}+\frac{3}{p^2}\right),
\end{align*}
where $u$ runs through $D_N$ and $n$ through $D_u$. 

Now, when $N$ is a prime number so that $D_N=\{1,N\}$, essential part of Theorems~1 becomes very simple. 

\begin{Corollary}\label{example}
Assume that $N$ is a prime number congruent to $L$ modulo $2$. 
Then, in the case $L=1$, 
$$|\mathfrak D_{1,1}|=\frac{2}{N}\left(\frac{(N-1)!}{N+1}+1\right)-1,\quad|\mathfrak D_{1,N}|=N-2\,;$$ 
in the case $1<L<N$, 
$$|\mathfrak D_{1,1}|=\frac{2\mathfrak s_{N-L+1}(1,2,\dots,N)}{N^2(N+1)}=\frac{2}{N^2(N+1)}\sum_I\prod_{a\in I}a,\quad\mathfrak D_{1,N}=\emptyset,$$
where $I$ ranges over all subsets of $\{1,2,\dots, N\}$ with cardinality $N-L+1;$ in the case $L=N$, 
$$\mathfrak D_{1,1}=\emptyset,\quad|\mathfrak D_{1,N}|=1.$$ 
\end{Corollary}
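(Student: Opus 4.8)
The plan is to specialize Theorem~1 to the case $N$ prime, so that $D_N = \{1, N\}$ and the M\"obius sums collapse to at most two terms. First I would record the two cases of the divisor structure: for $r = N$ we have $D_{N/r} = D_1 = \{1\}$, so $n = 1$ is the only term and $\mu(N/(nr)) = \mu(1) = 1$; for $r = 1$ we have $D_{N/r} = D_N = \{1, N\}$, so $n$ ranges over $\{1, N\}$ with $\mu(N/(n\cdot 1)) = \mu(N/n)$, giving $\mu(N) = -1$ at $n = 1$ and $\mu(1) = 1$ at $n = N$. Thus
\[
|\mathfrak D_{1,N}| = \frac{N}{N}\,\varPsi_{N,L,1} = \varPsi_{N,L,1}, \qquad |\mathfrak D_{1,1}| = \frac{1}{N}\bigl(\varPsi_{N,L,N} - \varPsi_{N,L,1}\bigr).
\]
So the entire problem reduces to computing $\varPsi_{N,L,1}$ and $\varPsi_{N,L,N}$ explicitly.

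For $\varPsi_{N,L,1}$, the defining sum has $n = 1$, hence $\min(n,L) = 1$, so only $m = 1$ survives; then $\varLambda_1^{(1)}$ consists of the single partition $\lambda = (L)$ of $L$ into one part (note $L \in D_{N/1} = D_N = \{1, N\}$ forces $L = 1$ or $L = N$, but in fact for general $L \le N$ one checks $\varLambda_1^{(1)} = \{(L)\}$ only when $L \mid N$, i.e. $L \in \{1, N\}$; otherwise it is empty). So I would split according to $L$. When $L = 1$: $\lambda = (1)$, $\varDelta_\lambda = 1$, $B_\lambda = 1$, and $P_{N/1} = P_N = \{N\}$ since $N$ is prime, with $i_0(1, (1), N) = 1$ (as $\gcd(1,N) = 1$) and $i(1,(1),N) = 1$ (as $N \cdot 1 \mid N$), giving $A_{1,(1),N} = (1 - 1/N)^2 - (-1/N)^2 = 1 - 2/N$. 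With $f_1^{(1)} = 2\mathfrak s_1(1) = 2$ and the prefactor $N^1/(1^2 \cdot 2) = N/2$, this yields $\varPsi_{N,1,1} = \tfrac{N}{2}\cdot 2\cdot 1\cdot(1 - 2/N) = N - 2$. Hmm — that gives $|\mathfrak D_{1,N}| = N-2$ directly, matching the claim. When $1 < L < N$ with $L$ not dividing $N$ (automatic since $N$ prime), $\varLambda_1^{(1)} = \emptyset$, so $\varPsi_{N,L,1} = 0$; when $L = N$, $\lambda = (N)$ with $\varDelta_\lambda = N$, $B_\lambda = 1/N$, $A_{1,(N),N} = 1$ (since $i(1,(N),N) = 0$ as $N\cdot N \nmid N$), giving $\varPsi_{N,N,1} = \tfrac{N}{2}\cdot 2\cdot \tfrac{1}{N}\cdot 1 = 1$, so $|\mathfrak D_{1,N}| = 1$.

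For $\varPsi_{N,L,N}$, here $n = N$ so $N/n = 1$, hence $D_{N/n} = \{1\}$ and $P_{N/n} = \emptyset$, so every $A_{N,\lambda,p}$ product is empty and equals $1$, and every $l_j$ must equal $1$; thus $\varLambda^{(N)}$ consists of the single partition $\lambda = (1,1,\dots,1)$ of $L$ into $L$ ones, which lies in $\varLambda_L^{(N)}$ (so $m = L$ is forced, and $m \le \min(N,L) = L$ is fine). For this $\lambda$: $\nu = L$, $\varDelta_\lambda = 1$, $\nu_1 = L$ and all other $\nu_k = 0$, so $B_\lambda = L!/(L!\cdot 1) = 1$. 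The prefactor is $N^N/(N^{N+1}(N+1)) = 1/(N(N+1))$, and $f_{N-L+1}^{(N)} = (1 - (-1)^{N-L+1})\mathfrak s_{N-L+1}(1,\dots,N) = 2\mathfrak s_{N-L+1}(1,\dots,N)$ precisely when $N - L + 1$ is odd, i.e. $N \equiv L \pmod 2$ (our hypothesis), and is $0$ otherwise. Hence $\varPsi_{N,L,N} = \tfrac{2\mathfrak s_{N-L+1}(1,\dots,N)}{N(N+1)}$. Combining:
\[
|\mathfrak D_{1,1}| = \frac{1}{N}\left(\frac{2\mathfrak s_{N-L+1}(1,\dots,N)}{N(N+1)} - \varPsi_{N,L,1}\right).
\]
In the case $L = 1$ this is $\tfrac{1}{N}\bigl(\tfrac{2\mathfrak s_N(1,\dots,N)}{N(N+1)} - (N-2)\bigr) = \tfrac{1}{N}\bigl(\tfrac{2\cdot N!}{N(N+1)} - N + 2\bigr) = \tfrac{2}{N}\bigl(\tfrac{(N-1)!}{N+1} + 1\bigr) - 1$ (using $\mathfrak s_N(1,\dots,N) = N!$); in the case $1 < L < N$, $\varPsi_{N,L,1} = 0$ gives $|\mathfrak D_{1,1}| = \tfrac{2\mathfrak s_{N-L+1}(1,\dots,N)}{N^2(N+1)}$, and since $\mathfrak s_{N-L+1}(1,\dots,N) = \sum_I \prod_{a \in I} a$ over $(N-L+1)$-subsets $I$ this is the stated formula, with $\mathfrak D_{1,N} = \emptyset$ because $\varPsi_{N,L,N}$ contributes but the regular-dessin count $\varPsi_{N,L,1}/1$ vanishes — wait, I should recompute: $|\mathfrak D_{1,N}| = \varPsi_{N,L,1} = 0$ here, so indeed $\mathfrak D_{1,N} = \emptyset$; in the case $L = N$ we get $|\mathfrak D_{1,1}| = \tfrac{1}{N}(1 - 1) = 0$ and $|\mathfrak D_{1,N}| = \varPsi_{N,N,1} = 1$. \textbf{The main obstacle} is purely bookkeeping: correctly identifying which $\varLambda_m^{(n)}$ are nonempty (i.e. the divisibility constraint $l_j \in D_{N/n}$ together with $\sum l_j = L$ and $\nu = m$), and tracking the parity factor $f^{(n)}_{n-m+1}$ against the hypothesis $N \equiv L \pmod 2$ so that it does not accidentally vanish; once those are pinned down the arithmetic is immediate.
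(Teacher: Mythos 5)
Your proposal is correct and takes essentially the same route as the paper: specialize Theorem~1 to $N$ prime so the M\"obius sums collapse, then evaluate $\varPsi_{N,L,1}$ and $\varPsi_{N,L,N}$ by identifying which $\varLambda_m^{(n)}$ are nonempty (your computations, including $\varPsi_{N,1,1}=N-2$, $\varPsi_{N,L,1}=0$ for $1<L<N$, $\varPsi_{N,N,1}=1$ and $\varPsi_{N,L,N}=2\mathfrak s_{N-L+1}(1,\dots,N)/(N(N+1))$, all check out). The paper merely abbreviates the $L=1$ and $L=N$ cases by citing its earlier formulas (9) and (8), which are themselves derived from Theorem~1 in exactly the way you recompute them inline.
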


\begin{proof}
The first assertion follows from (9), and the third from (8). 
If $1<L<N$, then $\varLambda_1^{(1)}=\emptyset$, $\varLambda_m^{(N)}=\emptyset$ for all positive integers $m<L$, and $\varLambda_L^{(N)}$ consists only of the partition of $L$ into $L$ parts equal to $1$. 
Therefore Theorem~1 implies the second assertion.  
\end{proof}

We finally give two consequences of Theorem~2. 

\begin{Corollary}
If $h$ is positive and relatively prime to $N$, then 
\begin{align*}
|\mathfrak D_2|= & \ |\mathfrak D_{2,1}|=\frac{1}{N}\left(\varSigma_h^{(N)}-\varSigma_{h+1}^{(N)}\right)\\
= &\ \frac{(N-1)!}{(h-1)!}\sum_{m=0}^{N-h-1}\frac{(-1)^{m}}{m!\, (h+m)(N-h-m)}+\frac{(-1)^{N-h}}{N}\binom{N}{h}\\
& -\frac{(N-1)!}{h!}\sum_{m=0}^{N-h-2}\frac{(-1)^{m}}{m!\, (h+m+1)(N-h-m-1)}.
\end{align*}
\end{Corollary}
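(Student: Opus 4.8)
The plan is to specialize Theorem~2 to the case $\gcd(h,N)=1$ and simplify. First I would determine which divisors $n$ contribute to the sums in Theorem~2. Since $h$ is coprime to $N$, the condition $N/n \mid h$ forces $N/n = 1$, i.e.\ $n = N$; thus $D^* = \{N\}$ and, for each $r \in D_N$, the set $D_r^* = D_r \cap D^*$ equals $\{N\}$ if $r = N$ and is empty otherwise. Consequently $|\mathfrak D_{2,r}| = 0$ for every $r < N$ in $D_N$, so that $|\mathfrak D_2| = |\mathfrak D_{2,N}|$. Moreover, when $r = N$ we have $D_{N/r}^* = D_1^* = \{N\}$ only if $N/N = 1 \mid h$, which always holds; wait --- more carefully, $D_{N/r} = D_1 = \{1\}$, and $1 \in D^*$ iff $N/1 = N \mid h$, which fails unless $N = 1$. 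Here one must instead read Theorem~2 as summing over $n \in D_{N/r}^*$ where $D_{N/r}^* = D_{N/r} \cap D^*$; since $D^* = \{N\}$, this is nonempty only when $N \in D_{N/r}$, i.e.\ $N/r = N$, i.e.\ $r = 1$. So in fact $|\mathfrak D_{2,r}| = 0$ for all $r > 1$, and $|\mathfrak D_2| = |\mathfrak D_{2,1}|$, with the sole contributing term $n = N$ and $\mu(N/(n\cdot 1)) = \mu(1) = 1$.

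Next I would compute $\varUpsilon_{N,h,N}$ from its definition with $n = N$. Then $hn/N = h$, $\varphi(N/n) = \varphi(1) = 1$, and $N/n - 1 = 0$. The first sum runs over $m$ from $h$ to $N-1$: the factor $(N/n - 1)^{m - hn/N} = 0^{m-h}$ vanishes unless $m = h$ (using $0^0 = 1$), leaving the single term $\binom{h}{h}\cdot 1 \cdot N^{N-h-1}/N^{N-h-1} \cdot 1 \cdot (\varSigma_h^{(N)} - \varSigma_{h+1}^{(N)}) = \varSigma_h^{(N)} - \varSigma_{h+1}^{(N)}$. The second bracketed term is $\binom{N}{h}\big(\frac{1\cdot N}{N}(0^{N-h} - (-1)^{N-h}) + (-1)^{N-h}\big)$; since $h < N$ forces $N - h \geq 1$ so $0^{N-h} = 0$, this collapses to $\binom{N}{h}\big(-(-1)^{N-h} + (-1)^{N-h}\big) = 0$. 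Hence $\varUpsilon_{N,h,N} = \varSigma_h^{(N)} - \varSigma_{h+1}^{(N)}$, and Theorem~2 gives $|\mathfrak D_2| = |\mathfrak D_{2,1}| = \frac{1}{N}\big(\varSigma_h^{(N)} - \varSigma_{h+1}^{(N)}\big)$, which is the first displayed line.

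For the explicit closed form I would substitute the definition $(1)$ of $\varSigma_j^{(n)}$ with $n = N$ at $j = h$ and $j = h+1$. We have
$$
\varSigma_h^{(N)} = \frac{N!}{(h-1)!}\sum_{m=0}^{N-h-1}\frac{(-1)^m}{m!\,(h+m)(N-h-m)} + (-1)^{N-h}\binom{N-1}{h-1} - 1,
$$
$$
\varSigma_{h+1}^{(N)} = \frac{N!}{h!}\sum_{m=0}^{N-h-2}\frac{(-1)^m}{m!\,(h+1+m)(N-h-1-m)} + (-1)^{N-h-1}\binom{N-1}{h} - 1.
$$
Subtracting, the two $-1$'s cancel, the binomial terms combine via $(-1)^{N-h}\binom{N-1}{h-1} - (-1)^{N-h-1}\binom{N-1}{h} = (-1)^{N-h}\big(\binom{N-1}{h-1} + \binom{N-1}{h}\big) = (-1)^{N-h}\binom{N}{h}$, and dividing by $N$ gives $\frac{(-1)^{N-h}}{N}\binom{N}{h}$ as in the claimed formula; the two sums become the first and third sums in the statement after the division by $N$ turns $N!$ into $(N-1)!$. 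I expect the only mild obstacle to be bookkeeping: correctly tracking the ranges of summation (note $\varSigma_{h+1}^{(N)}$ has upper limit $N - h - 2$, one less than that of $\varSigma_h^{(N)}$) and the Pascal identity for the binomial terms; everything else is direct substitution and the vanishing $0^{N-h} = 0$ argument already used for $\varUpsilon_{N,h,N}$. I would close by remarking that, as for $\mathfrak D_1$, the fact that $D^* = \{N\}$ when $\gcd(h,N) = 1$ is exactly what makes the formula collapse to a single term.
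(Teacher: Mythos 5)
Your argument is correct and follows essentially the same route as the paper's own proof: since $\gcd(h,N)=1$ forces $D^*=\{N\}$, only $r=1$ (with the single term $n=N$, $\mu(1)=1$) contributes in Theorem~2, and the rest is the evaluation $\varUpsilon_{N,h,N}=\varSigma_h^{(N)}-\varSigma_{h+1}^{(N)}$ followed by substitution of definition (1). Your momentary misreading of the index set is correctly self-repaired to the sum over $D_{N/r}^*$, and the subsequent bookkeeping (the vanishing $0^{N-h}$ term, the cancellation of the $-1$'s, and the Pascal identity $\binom{N-1}{h-1}+\binom{N-1}{h}=\binom{N}{h}$) is sound.
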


\begin{proof}
Under the condition of the corollary, a positive divisor $n$ of $N$ satisfying $N/n\mid h$ must be $N$. 
Hence Theorem~2 together with (1) completes the proof. 
\end{proof}

\begin{Corollary}
Assume that $N$ is a prime number greater than $h$. 
Then, in the case $h=0$, 
$$|\mathfrak D_{2,1}|=\frac1{N}\left((N-1)!+2+(-1)^N\right)+\sum_{m=1}^{N-1}\frac{(-1)^m(N-1)!}{m!\,(N-m)}-1,\quad |\mathfrak D_{2,N}|=N-2\,;$$
in the case $h>0$, 
$$|\mathfrak D_{2,1}|=\frac{1}{N}\left(\varSigma_h^{(N)}-\varSigma_{h+1}^{(N)}\right),\quad\mathfrak D_{2,N}=\emptyset.$$ 
\end{Corollary}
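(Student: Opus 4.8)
The plan is to specialize Theorem~2 to the case $N$ prime, where $D_N=\{1,N\}$, $\mu(1)=1$ and $\mu(N)=-1$. First I would pin down the set $D^*$: when $h=0$, every $n\in D_N$ satisfies $N/n\mid 0$, so $D^*=\{1,N\}$; when $0<h<N$, one has $N\nmid h$ but $1\mid h$, so $D^*=\{N\}$. Hence the index set $D_{N/1}^*$ appearing in Theorem~2 is $D^*$ itself, while $D_{N/N}^*=D_1\cap D^*$ equals $\{1\}$ if $h=0$ and is empty if $h>0$. Substituting these into $|\mathfrak D_{2,r}|=\frac{r}{N}\sum_{n\in D_{N/r}^*}\mu(N/(nr))\varUpsilon_{N,h,n}$, the formula collapses: for $h>0$ it gives $|\mathfrak D_{2,1}|=\frac1N\varUpsilon_{N,h,N}$ and $\mathfrak D_{2,N}=\emptyset$, and for $h=0$ it gives $|\mathfrak D_{2,N}|=\varUpsilon_{N,0,1}$ together with $|\mathfrak D_{2,1}|=\frac1N(\varUpsilon_{N,0,N}-\varUpsilon_{N,0,1})$. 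So everything reduces to evaluating $\varUpsilon_{N,0,1}$ and $\varUpsilon_{N,h,N}$ for $0\le h<N$.

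Both evaluations are immediate from the definition of $\varUpsilon$. In $\varUpsilon_{N,0,1}$ the sum over $m$ has only the term $m=0$, whose coefficient $\varSigma_0^{(1)}-\varSigma_1^{(1)}$ is $0$ because $\varSigma_0^{(1)}=0!-1=0$ and $\varSigma_1^{(1)}=0$ (an instance of $\varSigma_n^{(n)}=0$); hence $\varUpsilon_{N,0,1}$ reduces to its second term, which works out to $\varphi(N)-1=N-2$, so already $|\mathfrak D_{2,N}|=N-2$ in the case $h=0$. In $\varUpsilon_{N,h,N}$ one has $N/n-1=0$, so the powers $(N/n-1)^{m-hn/N}$ (with the convention $0^0=1$) annihilate every term of the sum except $m=h$, which contributes $\varSigma_h^{(N)}-\varSigma_{h+1}^{(N)}$, while the second term vanishes since $N>h$ makes $0^{N-h}=0$. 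Thus $\varUpsilon_{N,h,N}=\varSigma_h^{(N)}-\varSigma_{h+1}^{(N)}$, which settles both assertions for $h>0$ and, in the case $h=0$, leaves $|\mathfrak D_{2,1}|=\frac1N\bigl(\varSigma_0^{(N)}-\varSigma_1^{(N)}-N+2\bigr)$.

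The remaining task is to rewrite this last quantity in the claimed explicit form. Substituting $\varSigma_0^{(N)}=(N-1)!-1$ and the defining sum for $\varSigma_1^{(N)}$, simplifying the constant contributions (using $-(-1)^{N-1}=(-1)^N$), dividing by $N$ so that $N!/N=(N-1)!$, and finally applying $\frac1{m!\,(m+1)}=\frac1{(m+1)!}$ and the index shift $m\mapsto m-1$ to the one surviving sum, one obtains exactly $|\mathfrak D_{2,1}|=\frac1N\bigl((N-1)!+2+(-1)^N\bigr)+\sum_{m=1}^{N-1}\frac{(-1)^m(N-1)!}{m!\,(N-m)}-1$.

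There is no genuine obstacle; the only steps demanding care are the two $\varUpsilon$-evaluations — verifying that the vanishing powers together with $0^0=1$ really do kill all but one term of each sum, and that the second term drops out once $N>h$ — and the sign and index shift in the final rearrangement, where an error of one would spoil the match with the stated formula. As a check I would compare the resulting closed forms with the $N=2,3,5$ rows of Table~2.
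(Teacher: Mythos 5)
Your proposal is correct and follows essentially the same route as the paper: specialize Theorem~2 at prime $N$ (so the relevant index sets are $\{1,N\}$ for $h=0$ and $\{N\}$ for $h>0$), evaluate $\varUpsilon_{N,0,1}=\varphi(N)-1=N-2$ and $\varUpsilon_{N,h,N}=\varSigma_h^{(N)}-\varSigma_{h+1}^{(N)}$, and unwind definitions (1) and (2); the paper merely dispatches the $h>0$ case by citing its preceding corollary, which is exactly the computation you redo directly. (One small gloss: the second term of $\varUpsilon_{N,h,N}$ vanishes not from $0^{N-h}=0$ alone but because $\varphi(1)\,n/N=1$ makes the two $(-1)^{N-h}$ contributions cancel.)
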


\begin{proof}
Let $h=0$. 
Since $D_N=\{1,N\}$ and $\varSigma_0^{(1)}=\varSigma_1^{(1)}=0$, Theorem~2 yields 
$$|\mathfrak D_{2,N}|=N-2,\quad|\mathfrak D_{2,1}|=\frac{1}{N}\left(\varSigma_0^{(N)}-\varSigma_1^{(N)}-(N-2)\right).$$
These together with (1) and (2) prove the first assertion of the corollary. 
The second assertion follows immediately from Corollary~4. 
\end{proof}

\noindent
{\bf Remark 3.}
As is already noted, $\varSigma_j^{(n)}=0$ for every positive integer $n$ and every non-negative integer $j$ with $n-2\leq j\leq n$; hence Theorem~2 implies that $\mathfrak D_2=\emptyset$ if either $h=N-2$ or $h=N-1$. 
\\



MADOKA HORIE\\
Graduate School of Science and Faculty of Science\\
Tohoku University\\
Sendai 980-8575,Japan\\
email address:horiemaaa@gmail.com\\
\end{document}